\theoremstyle{plain}
\newtheorem{theorem}{Theorem}[section]
\newtheorem{corollary}[theorem]{Corollary}
\newtheorem{lemma}[theorem]{Lemma}
\newtheorem{Prop}[theorem]{Proposition}
\theoremstyle{definition}
\theoremstyle{remark}
\newtheorem{remark}[theorem]{Remark}
\newcommand{\norm}[1]{\left\lVert#1\right\rVert}
\newcommand{\ff}[1]{f\left( #1  \right) }
\newcommand*\diff{\mathop{}\!\mathrm{d}}
\newcommand{\normf}[1]{\left\lVert#1\right\rVert_{\text{flat}}}
\newcommand{\R}{{\mathbb{R}}}
\newcommand{\Z}{{\mathbb{Z}}}
\newcommand{\N}{{\mathbb{N}}}
\newcommand\1{\mathbbm{1}}
\newcommand{\ue}{u_{\varepsilon}}
\newcommand{\tgae}{\tilde{\gamma}_{\varepsilon}}
\newcommand{\go}{\gamma^{\textup{opt}}}
\newcommand{\gae}{\gamma_{\varepsilon}}
\newcommand{\ggae}{g_{\varepsilon}}
\newcommand{\W}{{W^{1,1}}(a,b)}
\newcommand{\WMQ}{{W^{-1,q}}(a,b)}
\newcommand{\WMQO}{{W^{-1,q}}}
\newcommand{\LE}{{L^{1}}(a,b)}
\newcommand{\LU}{{L^{\infty}}(a,b)}
\newcommand{\M}{\mathcal{M}(a,b)}
\newcommand{\Ge}{\mathscr{G}_{\varepsilon}}
\newcommand{\Gee}{\mathscr{G}^{\prime}_{\varepsilon}}
\newcommand{\dd}{ \, \textup{d}}
\newcommand{\bue}{\bar{u}_{\varepsilon}}
\DeclareMathOperator{\sign}{sign}
\newcommand{\mrs}{\mathbin{\vrule height 1.6ex depth 0pt width
0.13ex\vrule height 0.13ex depth 0pt width 1.3ex}}
\def\Xint#1{\mathchoice
{\XXint\displaystyle\textstyle{#1}}%
{\XXint\textstyle\scriptstyle{#1}}%
{\XXint\scriptstyle\scriptscriptstyle{#1}}%
{\XXint\scriptscriptstyle\scriptscriptstyle{#1}}%
\!\int}
\def\XXint#1#2#3{{\setbox0=\hbox{$#1{#2#3}{\int}$ }
\vcenter{\hbox{$#2#3$ }}\kern-.6\wd0}}
\def\dashint{\Xint-}
\begin{document}

\begin{center}
\begin{Large}
{\bf Eigendamage: an Eigendeformation model \\ 
for the variational approximation of \\ 
cohesive fracture -- a one-dimensional case study

}
\end{Large}
\end{center}

\begin{center}
\begin{large}
\renewcommand{\thefootnote}{\fnsymbol{footnote}}
Veronika Auer-Volkmann\footnote{Universit{\"a}t Augsburg, Institut f{\"u}r Mathematik, 
86135 Augsburg, Germany. \label{uni-adr}}\renewcommand{\thefootnote}{\arabic{footnote}}\addtocounter{footnote}{-1}\footnote{{\tt veronika.auervolkmann@gmail.com}}, 
Lisa Beck\textsuperscript{\ref{uni-adr}}\footnote{\tt lisa.beck@math.uni-augsburg.de}, and 
Bernd Schmidt\textsuperscript{\ref{uni-adr}} \!\!\footnote{\tt bernd.schmidt@math.uni-augsburg.de}
\end{large}
\end{center}

\begin{center}
\today
\end{center}
\smallskip

\begin{abstract}
We study an approximation scheme for a variational theory of cohesive fracture in a one-dimensional setting. Here, the energy functional is approximated by a family of functionals depending on a small parameter $0 < \varepsilon \ll 1$ and on two fields: the elastic part of the displacement field and an eigendeformation field that describes the inelastic response of the material beyond the elastic regime. We measure the inelastic contributions of the latter in terms of a non-local energy functional. Our main result shows that, as $\varepsilon \to 0$, the approximate functionals $\Gamma$-converge to a cohesive zone model. 
\end{abstract}
\smallskip

\begin{small}

\noindent{\bf Keywords.} Cohesive fracture, eigendeformation, two-field approximation, $\Gamma$-con\-ver\-gence. 

\noindent{\bf Mathematics Subject Classification.} 
74A45, 
74R20, 
74C05, 
49J45  
\end{small}

\section{Introduction} 

A tension test on a bar will typically show that small deformations are completely reversible (elastic regime) while large deformations lead to complete failure (fracture regime). Only for very brittle materials one observes a sharp transition between these two regimes (brittle fracture). By way of contrast, many materials exhibit an intermediate {\em cohesive zone \textup{(}damage regime\textup{)}} in which plastic flow occurs and a body shows gradually increasing damage before eventual rupture (ductile fracture). 

Variational models have been extremely successfully applied to problems in fracture mechanics, cf., e.g., \cite{Braides:98,afp,BourdinFrancfortMarigo:08,DelPiero:13} and the references therein including, in particular, the seminal contribution of Francfort and Marigo \cite{FrancfortMarigo:98}. Here energy functionals are considered that act on deformations in the class of functions of bounded variation (or deformation). The derivatives of these functions are merely measures, and the singular part of such a measure is directly related to the inelastic behavior of the bar. The resulting variational problems are of {\em free discontinuity type} allowing for solutions with jump discontinuity (macroscopic cracks). Moreover, within a damage regime the strain can contain a diffuse singular part describing continuous deformations beyond the elastic regime that can be related to the occurrence of microcracks, see e.g.\ \cite{FrancfortMarigo:93,FremondNedjar:96,FrancfortGarroni:06,GarroniLarsen:09,DelPieroTruskinovsky:09,CagnettiToader:11,PhamMarigo:13,AlessiMarigoVidoli:14}. In fact, when considering variational problems with stored energy functions of linear growth at infinity and surface energy contributions that scale linearly for small crack openings, all these contributions to the total strain interact, cp.\ \cite{BouchitteBraidesButtazzo:95}, which renders the problem challenging, both from a theoretical and a computational point of view. 

As free discontinuity problems are of great interest not only in fracture mechanics but also in image processing, several approximation schemes have been proposed with the aim to devise efficient numerical approaches to simulations. Most notably, the Ambrosio--Tortorelli approximation \cite{AT92,AT90} has triggered a still continuing interest in {\em phase field models} in which a second field (the `phase field') is introduced that can be interpreted as a damage indicator and whose value influences the elastic response of the material. 

With a particular focus on cohesive zone damage models we refer to, e.g., \cite{DalMasoIurlano:13,Iurlano:13,ContiFocardiIurlano:16,DalMasoOrlandoToader:16,CrismaleLazzaroniOrlando:18}. A small parameter $\varepsilon$ is introduced in such models that corresponds to an intrinsic length scale over which sharp interfaces of the phase field variable are smeared out. A different approach has been initiated by Braides and Dal Maso for the Mumford--Shah functional, and then extended to various generalized settings in, e.g., \cite{BrGa98,Braides:98,LuVi97,CortesaniToader:99b,LVDD07,Neg06,Lussardi:08,LM13} which involves a non-local approximation of the original field $u$ in terms of convolution kernels with intrinsic length scale $\varepsilon \ll 1$. 

Our main motivation comes from the Eigenfracture approach to brittle materials that has been developed in \cite{sfo} and further considered in \cite{PWO:21,PandolfiOrtiz:12,QinamiPandolfiKaliske:20}. Our main aim is to extend this model to a ductile fracture regime with a significant damage zone. The variables of the model are the deformation field $u_\varepsilon$ and an {\em eigendeformation} field $g_\varepsilon$, which induces a decomposition of the strain $u_\varepsilon' = (u_\varepsilon' - g_\varepsilon) + g_\varepsilon$ into an elastic and an inelastic part, the latter describing deformation modes that cost no local elastic energy. (We refer to \cite{M13} for more details on the concept of eigendeformations to describe inelastic deformations and, in particular, plastic deformations.) The energy associated to the formation and increase of damage is accordingly modeled in terms of a non-local functional acting on $g_\varepsilon$, which replaces the non-local contribution defined in terms of a simple $\varepsilon$-neighborhood of the crack set in the original Eigenfracture model by  a more general (and softer) convolution approximation. 

We would like to point out that our set-up thus introduces a novel modeling aspect to damage functionals. Instead of an explicit dependence of the stored energy function on the damage as being encoded in a phase field, in our model the constitutive laws, i.e., the linear elastic energy $|\cdot|^2$ and fracture contribution $f$ (see below) remain unchanged. An increase of damage is rather related to a transition from the elastic deformation field to the eigendeformation field. In particular, plastic deformations at the onset of the inelastic regime need not immediately lead to softening of the material. With respect to non-local convolution approximation schemes of the deformation field $u$ we remark that in our model such non-local contributions need to be evaluated only near the support of $g_\varepsilon$ but not on purely elastic regions.  

In the present contribution we focus on the one-dimensional case. In this setting our analysis will benefit from the corresponding studys~\cite{LuVi97} of Lussardi and Vitali for pure convolution functional. Indeed, we will follow along the same path in order to adapt and extend their methods to our two field set-up. There are, however, a number of notable differences in our analysis which lead us, also in view of later extensions to higher dimensions~\cite{AuerBeckSchmidt:21b}, to provide a self-contained account of our results. A main difficulty stems from the fact that there is no pre-assigned functional relation between the eigendeformation fields $g_\varepsilon$ and the strain fields $u_\varepsilon'$. Rather these quantities are merely `coupled by regularity' in the sense that $u_\varepsilon' - g_\varepsilon \in L^2$. As the limit of $g_\varepsilon$ needs to be studied in a rather weak space, this leads to technical difficulties when transferring asymptotic properties from $u_\varepsilon$ to $g_\varepsilon$. 

Our results also constitute the first step towards higher-dimensional models. In particular, the case of antiplane shear will be addressed in a forthcoming contribution \cite{AuerBeckSchmidt:21b}. Here the lack of a direct relation between $g_\varepsilon$ and $\nabla u_\varepsilon$ and hence the absence of an underlying gradient structure will pose severe additional challenges. 

\subsection*{Outline} 
We start by describing the setting of the problem and by stating the main results in Section~\ref{sec:settingoned}. In Section~\ref{sec_preliminaries} we remind some facts on functions of bounded variation and the flat topology. Section~\ref{sec: compactness1d} is devoted to a compactness result. The $\Gamma$-lower limit for the eigendamage model is established in Section~\ref{sec: estimate from below 1d}. To this end, we first derive the estimate from below of the jump part, subsequently the estimate from below of the volume term and the Cantor term, and the proof of the $\Gamma\text{-}\liminf$ inequality is then completed  by combining the previous results. In Section~\ref{sec:upper-limit} we then address the estimate from above of the $\Gamma$-upper limit. Finally, in Section~\ref{sec: minimal energies 1d} the asymptotic behavior of the minimal energies with respect to the eigendeformation variable is studied.

\section{Setting of the problem and main result} \label{sec:settingoned}

Suppose that a beam occupies the region $(a,b)$ with $0<a<b<\infty$ and that a displacement $u \colon (a,b) \to \R$ affects the beam. As cohesive energy associated with $u$ we shall consider
\begin{align}
F(u) &= \int_a^b { \psi(|u^{\prime}|) } \dd x+ 2 \sum_{x \in J_u} \ff{\frac{1}{2}|[u](x)|} + c_0 |D^cu|(a,b), \label{GF1d}
\end{align}
where $c_0$ is a fixed positive constant, and $\psi, f \colon [0,\infty) \to [0,\infty)$ are functions defined via

\noindent
\begin{minipage}{0.49\textwidth}
\begin{align*}
\psi(t) & = \begin{cases}
t^2 & \text{if } t < \frac{c_0}{2},\\
c_0 \, t - \frac{c_0^2}{4} & \text{if } t \geq \frac{c_0}{2},
\end{cases} \\
\intertext{and}
f(t) & =  \begin{cases}
c_0\, t & \quad   \text{if } t < 1, \\
c_0 & \quad  \text{if } t\geq 1.
\end{cases}
\end{align*}
\end{minipage} \hfill
\begin{minipage}{0.49\textwidth}
 
\begin{center}
\begin{tikzpicture}
 
 \draw[thick,->] (-0.2,0) -- (4,0) node[right] {$t$};
 \draw[thick,->] (0,-0.2) -- (0,3);
 
 \draw (1,0.05) -- (1,-0.05) node[below] {$1$};
 \draw (0.05,1.6) -- (-0.05,1.6) node[left] {$c_0$};
 
 \draw[thick,red] (0,0) -- (1,1.6);
 \draw[thick,red] (1,1.6) -- (4,1.6);
 
 \draw[domain=0:0.8,smooth,variable=\x,blue,thick] plot (\x,{\x*\x});
 \draw[domain=0.8:2.25,smooth,variable=\x,blue,thick] plot (\x,{1.6*\x-0.64});
 
 \node[red] at (3,2) {$f$};
 \node[blue] at (1.5,2.7) {$\psi$};
 
\end{tikzpicture}
\end{center}
 
\end{minipage}

\vskip.4\baselineskip

\noindent Note that~$f$ is the simplest continuous function such that $f(0)=0$, 
\begin{equation*}
\lim_{t \to 0^+} \frac{f(t)}{t}=c_0  \quad  \text{and} \quad \lim_{t \to \infty} f(t)=c_0. 
\end{equation*}

The main ingredients in the energy~\eqref{GF1d} are a volume term, depending on the strain of the beam $u^{\prime}$  and corresponding to the stored energy, a surface term, depending on the crack opening~$[u]  \coloneqq  u(\cdot \, +) - u(\cdot \, -)$ on the jump set $J_u$ and modeling the energy caused by cracks, and finally a diffuse damage term, depending on the Cantor derivative $D^cu$ and corresponding to the energy caused by microcracks.

The natural function space in order to study such functionals in one dimension is the space $BV(a,b)$ of functions of bounded variation on $(a,b)$. Notice that the distributional derivative of each function $u \in BV(a,b)$ allows for a decomposition $Du = u' \mathcal{L}^1 + D^s u$ into the absolutely continuous and the singular part with respect to the Lebesgue measure, and the singular part $D^s u = [u] \mathcal{H}^0 \mrs J_u + D^cu $ in turn into the jump part and the Cantor part, which we have used in~\eqref{GF1d}. We consider both models with an apriori bound $\norm{u}_{\LU} \leq K$, $K < \infty$, and unrestricted models with $K = \infty$. 

We next introduce a functional depending on two fields $u \in L^1(a,b)$ (in case $K<\infty$), respectively, $u \in L^0((a,b), \overline{\R})$ (in case $K=\infty$) and $\gamma \in \M$ with a non-local approximation of the the second variable~$\gamma$, given as 
\begin{align*}  
E_{\varepsilon}(u,\gamma)  \coloneqq  
\begin{cases}
\int_a^b { |u'- g |^2 \dd x }& \quad \text{if } u \in \W, \,  \norm{u}_{\LU} \leq K, \\ + \frac{1}{\varepsilon} \int_a^b {f \big( \varepsilon \dashint_{I_{\varepsilon}(x) \cap (a,b)}{ |g | } \dd t} \big) \dd x  & \quad \phantom{if } \gamma= g \mathcal{L}^1, \, g \in L^1(a,b), \\
 & \quad \phantom{if } \text{and }   u'-g \in  L^2(a,b),\\[0.2cm]
\infty & \quad \text{otherwise},
\end{cases}
\end{align*}
with $\varepsilon > 0$, $I_{\varepsilon}(x) \coloneqq(x-\varepsilon,x+\varepsilon)$, and either $K>0$ a fixed constant or $K=\infty$. We notice that $E_{\varepsilon}(u,\gamma)$ can only be finite if $\gamma$ is absolutely continuous with respect to the Lebesgue measure, with density in $L^1(a,b)$. In this case $u'$ represents the strain of the beam and~$\gamma$ is intended to compensate~$u'$ in regions where~$u^{\prime}$ is above a certain strain level. Hence, $u'-g$ is the elastic strain of the material, while~$\gamma$ describes the deformation of the material beyond the elastic regime, indicating that a permanent deformation is exhibited if $\gamma \neq 0$. In what follows, we are interested into the asymptotic behavior of the functionals $\{E_\varepsilon\}_{\varepsilon>0}$ as $\varepsilon \searrow 0$ (in the sense of $\Gamma$-convergence). Focussing first on the case $K<\infty$, it will be described by the energy functional~$E$ which for $(u,\gamma) \in L^1(a,b) \times \M$ is defined as
\begin{equation*}
E(u,\gamma) \coloneqq
\begin{cases}
\int_a^b {|u^{\prime}- g|^2 \dd x }+ c_0 \int_a^b {  |g| \dd x } & \quad \text{if } u \in BV(a,b), \,  \norm{u}_{\LU} \leq K, \\ 
\quad + 2 \, \sum_{x \in J_u} \ff{\frac{1}{2}|[u](x)|} & \quad \quad \gamma = D^su +g \mathcal{L}^1, \, g \in L^1(a,b),\\  \quad +c_0 |D^cu|(a,b)
& \quad \phantom{if } \text{and }  u^{\prime}- g \in L^2(a,b),\\[0.2cm] 
\infty & \quad \text{otherwise}. 
\end{cases} 
\end{equation*}
Let us notice that for a finite energy $E(u,\gamma)$, the displacement field~$u$ and the eigendeformation field~$\gamma$ need to be linked in a very particular way. The singular part~$\gamma^s$ of the measure~$\gamma$ with respect to the Lebesgue measure needs to coincide with the singular part~$D^s u$ of the distributional derivative of~$u$. The absolutely continuous part~$g \mathcal{L}^1$ of~$\gamma$ instead is not completely determined by the function~$u$, but only the integrability restriction $u^{\prime}- g \in L^2(a,b)$ is required.  A particularly interesting choice of~$g$ for a given function $u \in BV(a,b)$ constitutes the unique minimizer $g^\ast$ of the optimization problem
\begin{equation}
\label{optimization_problem}
\text{to minimize } \int_a^b {|u^{\prime}- g|^2 \dd x }+ \int_a^b { c_0 |g| \dd x } \quad \text{among all } g \in L^1(a,b).
\end{equation}
By a pointwise minimization of the integrand, the minimizer~$g^\ast$ is explicitly given as 
\begin{equation}
g^\ast = \begin{cases}u^{\prime} - \sign(u') \frac{c_0}{2} & \text{if } |u^{\prime}| > \frac{c_0}{2}, \\
0 & \text{if } |u^{\prime}| \leq \frac{c_0}{2}.
\end{cases} \label{gammaaopt}
\end{equation}
For later purposes we notice that the eigendeformation field~$\gamma$ is completely described in terms of the function~$u$ as 
\begin{equation*}
\go \coloneqq D^su + g^\ast \mathcal{L}^1.
\end{equation*}
Moreover, the corresponding energy functional $E(u,\go)$ reduces to a one-field functional depending only on the displacement~$u \in BV(a,b)$, which under the additional restriction $\norm{u}_{\LU} \leq K$ (if $K < \infty$) is precisely given by the energy $F(u)$ introduced in~\eqref{GF1d}.

In order to state our $\Gamma$-convergence result we need to endow $L^1(a,b) \times \M$ with a topology. A natural choice for the first component is the strong topology on $L^1(a,b)$. One appropriate choice for the second component is the flat topology, that is the norm topology on the dual of the space of Lipschitz continuous functions with compact support, while an alternative choice is the topology induced by suitable negative $\WMQO$-Sobolev norms, see Section~\ref{sec_preliminaries} for more details. Our main result is the following: 

\begin{theorem}\label{mainresult}
Let $L^1(a,b)$ be equipped with the strong topology and $\mathcal{M}(a,b)$ be equipped with the flat topology. Assume $K<\infty$. Then the family $\{E_{\varepsilon}\}_{\varepsilon>0}$ $\Gamma$-converges to~$E$ in $L^1(a,b) \times \mathcal{M}(a,b)$, i.e., we have
\begin{enumerate}[font=\normalfont, label=(\roman{*}), ref=(\roman{*})]
\item \emph{\textup{(}$\liminf$ inequality\textup{)}} For every sequence $\{(u_{\varepsilon},\gamma_{\varepsilon})\}_{\varepsilon}$ in $\LE \times \M$ converging to  $(u , \gamma) \in L^1(a,b) \times \M$, i.e., $u_{\varepsilon} \to u$ in $L^1(a,b)$ and $\gamma_{\varepsilon} \to \gamma$ in the flat norm, we have
\begin{equation*}
\liminf_{\varepsilon \to 0} E_{\varepsilon}(u_{\varepsilon},\gamma_{\varepsilon}) \geq E(u,\gamma).
\end{equation*} 
\item \emph{($\limsup$ inequality\textup{)}} For every $(u,\gamma) \in  L^1(a,b) \times \mathcal{M}(a,b)$ there exists a sequence $\{(u_{\varepsilon},\gamma_{\varepsilon})\}_{\varepsilon}$ in $\LE \times \M$ such that  $u_{\varepsilon} \to u $ in $ L^1(a,b)$, $\gamma_{\varepsilon} \to \gamma$ in the flat norm, and
\begin{equation*}
\limsup_{\varepsilon \to 0} E_{\varepsilon}(u_{\varepsilon}, \gae)\leq E(u,\gamma). 
\end{equation*}
\end{enumerate}
\end{theorem}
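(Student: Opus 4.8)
The plan is to prove the $\liminf$ and $\limsup$ inequalities separately, following the strategy developed by Lussardi and Vitali~\cite{LuVi97} for purely non-local functionals and adapting it to the two-field setting; the only genuinely new ingredient is a one-dimensional optimisation inequality that produces the cohesive profile $t \mapsto 2\ff{t/2}$ for the jump energy.

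\emph{The $\liminf$ inequality.} Let $\uge \to (u,\gamma)$ as in the statement. If $\liminf_\varepsilon E_\varepsilon(\uge) = \infty$ there is nothing to prove, so I pass to a subsequence (not relabelled) realising the $\liminf$ with $E_\varepsilon(\uge) \le C$; in particular $h_\varepsilon := \ue' - \ggae$ is bounded in $L^2(a,b)$, hence $h_\varepsilon \weakly h$ for a further subsequence and $\liminf_\varepsilon \int_a^b |h_\varepsilon|^2 \dd x \ge \int_a^b |h|^2 \dd x$ by weak lower semicontinuity. From $\Du_\varepsilon = h_\varepsilon \mathcal{L}^1 + \gae$ together with $\ue \to u$ in $\LE$ and $\gae \to \gamma$ in the flat norm (both yielding convergence as distributions) one passes to the limit to get $\Du = h\mathcal{L}^1 + \gamma$; since the right-hand side is a finite measure this forces $u \in BV(a,b)$, whence $\norm{u}_{\LU} \le K$, $\gamma^s = D^su$, and $g := u' - h \in L^1(a,b)$ with $u' - g = h \in L^2(a,b)$ --- that is, $(u,\gamma)$ lies in the domain of $E$ and $g$ is precisely the density of the absolutely continuous part of $\gamma$. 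It therefore remains to bound the non-local term from below by $c_0 \int_a^b |g| \dd x + 2 \sum_{x \in J_u} \ff{\tfrac12 |[u](x)|} + c_0 |D^cu|(a,b)$.

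\emph{The lower bound for the non-local term.} The key step is a one-dimensional optimisation inequality: since $t \mapsto \min\{t,1/\varepsilon\}$ is concave and, for a nonnegative finite measure $\mu$, the map $\mu \mapsto \mu(I_\varepsilon(x))$ is linear, the functional $\mu \mapsto \int_{\R} \min\{c_0 \mu(I_\varepsilon(x))/(2\varepsilon),\, c_0/\varepsilon\} \dd x$ is concave in $\mu$ and translation invariant, so by Jensen's inequality $\int_{\R} \min\{c_0 \mu(I_\varepsilon(x))/(2\varepsilon), c_0/\varepsilon\}\dd x \ge c_0\min\{m,2\} = 2\ff{m/2}$ for every nonnegative $\mu$ of mass $m$, with equality when $\mu$ is a Dirac mass. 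Consequently, whenever $I \Subset (a,b)$ is an open interval with $|\gamma|(\partial I) = 0$, the flat convergence of $\gae$ (tested against Lipschitz functions supported in $I$) gives $\liminf_\varepsilon \int_I |\ggae| \dd x \ge |\gamma|(I)$, and then, replacing $|\ggae|$ by its restriction to $I$ and using monotonicity of $\ff{\cdot}$,
\[
\liminf_{\varepsilon \to 0} \frac{1}{\varepsilon} \int_{\{\operatorname{dist}(x, I) < \varepsilon\}} \ff{\varepsilon \dashint_{I_\varepsilon(x) \cap (a,b)} |\ggae|} \dd x \;\ge\; 2\, \ff{\tfrac12 |\gamma|(I)}.
\]
To conclude, fix $\delta \in (0,2)$, isolate the finitely many atoms $x_i$ of $\gamma$ with $|\gamma(\{x_i\})| > \delta$ in small pairwise disjoint open intervals $J_i$ with $|\gamma|(\partial J_i) = 0$, and partition the remainder of $(a,b)$ (avoiding atoms and $\partial(a,b)$) into finitely many small open intervals of $|\gamma|$-mass $\le \delta$; applying the displayed estimate on each $J_i$ and on each partition interval, noting that for small $\varepsilon$ the $\varepsilon$-enlargements are disjoint and the integrand is nonnegative, and using $2\ff{m/2} = c_0 m$ for $m \le 2$ and $2\ff{m/2} \le c_0 m$ for all $m$, one obtains that the $\liminf$ of the non-local term is at least $2\sum_{x \in J_u} \ff{\tfrac12 |[u](x)|} + c_0 |\gamma^{na}|(a,b) - o(1)$ as $\delta \to 0$, where $\gamma^{na} = g\mathcal{L}^1 + D^cu$. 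Adding the elastic contribution via superadditivity of $\liminf$ gives $\liminf_\varepsilon E_\varepsilon(\uge) \ge E(u,\gamma)$. (Equivalently, as announced in Section~\ref{sec: estimate from below 1d}, one may split this into a separate jump estimate and a separate volume-and-Cantor estimate followed by a gluing argument; the substance is the same.)

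\emph{The $\limsup$ inequality.} By diagonalisation it suffices to exhibit a recovery sequence for each $(u,\gamma)$ in the domain of $E$, and, subtracting off all but the $N$ largest jumps of $u$ (which alters neither $u'$ nor $h = u'-g$), one may assume $J_u$ finite. Fix a symmetric mollifier $\rho$, a scale $\eta = \eta(\varepsilon)$ with $\eta/\varepsilon \to 0$, and a $BV$-extension $\bar u$ of $u$ past $\partial(a,b)$ with $\|\bar u\|_{L^\infty} \le K$ and $|D\bar u|((a-\rho,a) \cup (b,b+\rho)) \to 0$ as $\rho \to 0$; set $\ue := (\rho_\eta * \bar u)|_{(a,b)}$ and $\ggae := \ue' - \rho_\eta * \bar h$ with $\bar h$ extending $h$ accordingly. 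Then $\ue \in \W$, $\norm{\ue}_{\LU} \le K$, $\ue \to u$ in $\LE$, $\gae := \ggae \mathcal{L}^1 = \rho_\eta * D\bar u - (\rho_\eta * \bar h)\mathcal{L}^1 \weakstar \gamma$ hence $\gae \to \gamma$ in the flat norm, and $\ue' - \ggae = \rho_\eta * \bar h \to h$ in $L^2(a,b)$, so the elastic term converges to $\int_a^b |u'-g|^2 \dd x$. For the non-local term I would write $\ggae = \rho_\eta * (g\mathcal{L}^1) + \rho_\eta * (D^j u) + \rho_\eta * (D^c u)$ (up to the extension): on the set where $I_\varepsilon(x)$ avoids the $\eta$-neighbourhoods of $J_u$ the middle summand drops out, $\min\{c_0 \dashint_{I_\varepsilon(x)\cap(a,b)} |\ggae|, c_0/\varepsilon\} \le c_0 \dashint_{I_\varepsilon(x)\cap(a,b)}\big(|\rho_\eta*(g\mathcal L^1)| + |\rho_\eta*(D^cu)|\big)$, and a Fubini identity bounds the integral of the right-hand side by $c_0\big(\int_a^b |g|\dd x + |D^cu|(a,b)\big) + o(1)$; on the $(\varepsilon+\eta)$-neighbourhood of each $x_i$ the summand $\rho_\eta * (D^ju)$ dominates, and since $\eta/\varepsilon \to 0$ and $\bigcup_i (x_i - 3\varepsilon, x_i+3\varepsilon)$ is both Lebesgue- and $|D^cu|$-null in the limit (which absorbs the other two summands there), a direct computation yields $\frac1\varepsilon \int_{(x_i-\varepsilon-\eta,\,x_i+\varepsilon+\eta)} \ff{\varepsilon \dashint_{I_\varepsilon(x)\cap(a,b)} |\rho_\eta*(D^ju)|} \dd x \to 2\,\ff{\tfrac12 |[u](x_i)|}$. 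Summing over $i$ and letting $N \to \infty$ gives $\limsup_\varepsilon E_\varepsilon(\uge) \le E(u,\gamma)$.

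\emph{Main obstacle.} The heart of the proof is the lower bound for the non-local term, where the three qualitatively different contributions (linear in the diffuse density, concave-saturating in the jump heights, linear in the Cantor derivative) must all be extracted from the single truncated convolution functional without double-counting; the concavity/Jensen argument above achieves this while simultaneously pinning down the cohesive profile $2\ff{\cdot/2}$. The difficulty is compounded by the fact, stressed in the introduction, that asymptotic information is directly available only for $\ue$ (strong $\LE$-convergence, finite-measure limit of $\Du_\varepsilon$), while the energy controls $\gae$ merely through the weak coupling $\ue' - \ggae \in L^2$ and the weak flat convergence --- in particular $\ggae$ need not be bounded in $L^1$, which is exactly why the lower bound must be localised interval-by-interval rather than read off from a weak-$\ast$ limit of $|\ggae|\mathcal{L}^1$. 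By comparison the $\limsup$ inequality is essentially routine once the two-scale mollification $\eta \ll \varepsilon$ is set up correctly.
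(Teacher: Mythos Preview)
Your approach is genuinely different from the paper's and, in outline, more direct. For the $\liminf$ inequality the paper proves separate jump, volume and Cantor estimates (Propositions~\ref{JPEP} and~\ref{VPEP}), relying on the grid Lemma~\ref{app}, the rather delicate truncation Lemma~\ref{LVtrunE} to control oscillations of~$u_\varepsilon$, and the cut-off Lemma~\ref{cutoff}, and finally glues them via a measure-theoretic supremum argument (Theorem~\ref{generalliminf}). You instead propose a single concavity/Jensen inequality that simultaneously produces the cohesive profile $2f(\cdot/2)$ at atoms and the linear contribution $c_0|\gamma|$ on the diffuse part. The key observation that $\mu\mapsto\int_{\R}\min\{c_0\mu(I_\varepsilon(x))/(2\varepsilon),\,c_0/\varepsilon\}\,\mathrm{d}x$ is concave, translation-invariant, and hence bounded below (at fixed mass $m$) by its value $2f(m/2)$ at a Dirac is correct; so is $\liminf_\varepsilon\int_I|g_\varepsilon|\,\mathrm{d}x\ge|\gamma|(I)$, which does follow from flat convergence by testing against fixed Lipschitz $\varphi$ with $|\varphi|\le 1$ compactly supported in~$I$ and taking the supremum.

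There is, however, a concrete gap in your localisation step. You partition the complement of the big-jump neighbourhoods into finitely many open intervals of $|\gamma|$-mass at most~$\delta$ and assert that ``for small~$\varepsilon$ the $\varepsilon$-enlargements are disjoint.'' For a partition this is false: adjacent intervals have $\varepsilon$-enlargements overlapping on sets of width~$2\varepsilon$, and since the integrand can reach $c_0/\varepsilon$ the over-count is of order $Nc_0$ with $N$ the number of intervals --- not small as $\delta\to 0$. The fix is to leave gaps of a small fixed width~$\eta$ between the intervals, chosen so that $|\gamma|(\text{gaps})$ is small, and to let $\varepsilon\to 0$ before $\eta\to 0$ before $\delta\to 0$; but it must be said.

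A second gap affects the $\limsup$ inequality. Subtracting all but the $N$ largest jumps of~$u$ yields~$u_N$ with $\|u_N-u\|_{L^\infty}\le\sum_{|[u](x)|\le\delta_N}|[u](x)|\to 0$, but for fixed~$N$ one may well have $\|u_N\|_{L^\infty}>K$, so your recovery sequence $\rho_\eta*\bar u_N$ need not satisfy the hard constraint $\|u_\varepsilon\|_{L^\infty}\le K$. The paper faces the same issue in Theorem~\ref{thm:Gmma-limsup} and resolves it by truncating at level $K+\eta_h$ and rescaling by $K/(K+\eta_h)$; you need an analogous correction step.
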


The associated compactness result is stated in Theorem~\ref{com}, where we in fact establish for the second variable convergence in $\WMQ$ for all $1 < q < \infty$. Therefore, we obtain as a direct consequence of Theorem~\ref{mainresult} also $\Gamma$-convergence of $\{E_{\varepsilon}\}_{\varepsilon>0}$ to~$E$ in $L^1(a,b) \times \mathcal{M}(a,b)$, when $\mathcal{M}(a,b)$ is equipped with the stronger topology of convergence in $\WMQ$ for some $1 < q < \infty$.  
 
\begin{remark}
Our result can be seen as a two-field extension of the setting considered in~\cite{LuVi97}. Indeed, introducing the constraints $g = u'$, respectively $\gamma = Du$, one is lead to functionals $F_\varepsilon(u) = E_\varepsilon(u,u'\mathcal{L}^1)$ and $F(u) = E(u, Du)$ depending on $u$ only. In this case the $\Gamma$-convergence of the sequence $\{F_\varepsilon\}_\varepsilon$ to $F$ has been obtained in~\cite{LuVi97}. 
\end{remark}

The unrestricted problem is in fact strongly related. Indeed, even for $K = \infty$ an energy bound implies $L^\infty$ bounds away from an asymptotically small exceptional set. The complement of the exceptional set can be chosen as a union of a bounded number of intervals, concentrating on the points of a finite partition $a = x_0 < x_1 < \ldots < x_m = b$ of $(a, b)$ in the limit $\varepsilon \to 0$ such that $\{u_\varepsilon\}_\varepsilon$ and $\{g_{\varepsilon}\}_\varepsilon$ converge with respect to the $L^1$ norm, respectively the flat norm, locally on $(a, b) \setminus \{ x_1, \ldots, x_{m-1} \}$. On the exceptional set, however, $u_{\varepsilon}'$ and $g_{\varepsilon}$ can assume extremely large values, spoiling their convergence even in a weak distributional sense. As a result, large jumps can develop in the limit and parts of $u_\varepsilon$ may elapse to $\pm \infty$. In order to account for such a possibility we consider limiting functions taking values in $\overline{\mathbb{R}} = \mathbb{R} \cup \{-\infty,+\infty\}$. More precisely, let $\mathcal{P} = \big\{ (x_0, \ldots, x_m) \colon a = x_0 < x_1 < \ldots < x_{m} = b, \ m \in \mathbb{N} \big\}$ and consider $BV_{\infty,\mathcal{P}}(a,b)$ as consisting of functions $u  \colon (a, b) \to \overline{\R}$ of the form 
\begin{equation}\label{eq:u-w-alphai}
  u = w + \sum_{i=1}^m \alpha_i \chi_{(x_{i-1}, x_i)} 
\end{equation}
with $\alpha_i \in \{-\infty, 0, +\infty\}$, $i = 1, \ldots, m$, $(x_0, \ldots, x_m) \in \mathcal{P}$ and $w \in BV(a, b)$. We denote the part where $u$ is finite by $\mathcal{F}(u) = \big( \bigcup_{i: \alpha_i = 0}[x_{i-1},x_i] \big)^\circ$, set $J_u = \{x \in (a, b) \colon [u](x) = u(x+) - u(x-) \in \overline{\R} \setminus \{0\}\}$ and read $\ff{\infty} = c_0$. We then extend $E$ to $L^0((a,b), \overline{\R}) \times \M$ by setting 
\begin{equation*}
E(u,\gamma) \coloneqq
\begin{cases}
\int_{\mathcal{F}(u)} {|u^{\prime}- g|^2 \dd x }+ c_0 \int_{\mathcal{F}(u)} {  |g| \dd x } & \quad \text{if } u \in BV_{\infty,\mathcal{P}}(a,b), \,  g \in L^1(\mathcal{F}(u)), \\ 
\quad + 2 \, \sum_{x \in J_u} \ff{\frac{1}{2}|[u](x)|} & \quad \quad \gamma\mrs\mathcal{F}(u) = (D^su + g \mathcal{L}^1)\mrs\mathcal{F}(u),\\  \quad +c_0 |D^cu|(\mathcal{F}(u))
& \quad \phantom{if } \text{and }  u^{\prime}- g \in L^2(\mathcal{F}(u)),\\[0.2cm] 
\infty & \quad \text{otherwise}. 
\end{cases} 
\end{equation*}
We say that $(u_\varepsilon, \gamma_\varepsilon) \to (u, \gamma)$ in $L^0((a,b), \overline{\R}) \times \M$ if $u_\varepsilon \to u$ a.e.\ and $\gamma_\varepsilon \to \gamma$ in the flat norm locally in $\mathcal{F}(u)$ on the complement of a finite set, i.e., $\gamma_\varepsilon \mrs A \to \gamma \mrs A$ on each open set $A \Subset \mathcal{F}(u) \setminus \{x_1, \ldots, x_{m-1}\}$ for some $(x_0, \ldots, x_m) \in \mathcal{P}$. With this notion of convergence we have:  
\begin{theorem}\label{mainresultKinfty}
Assume $K=\infty$. Then the family $\{E_{\varepsilon}\}_{\varepsilon>0}$ $\Gamma$-converges to~$E$ in $L^0((a,b), \overline{\R}) \times \M$. 
\end{theorem}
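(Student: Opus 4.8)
The strategy is to bootstrap from the finite-$K$ case (Theorem~\ref{mainresult}) by a localization and truncation argument. The key structural observation is that on any interval where an energy bound forces $\norm{u_\varepsilon}_{L^\infty}$ to stay bounded, the analysis of Theorem~\ref{mainresult} applies verbatim, and the only new phenomenon for $K=\infty$ is that a bounded number of "bad" intervals can carry unbounded $u_\varepsilon'$ and $g_\varepsilon$, producing infinite jumps in the limit across the points $x_1,\dots,x_{m-1}$.

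\textbf{Liminf inequality.} Given $(u_\varepsilon,\gamma_\varepsilon)\to(u,\gamma)$ in the sense described, with $u\in BV_{\infty,\mathcal{P}}(a,b)$ and $\liminf_\varepsilon E_\varepsilon(u_\varepsilon,\gamma_\varepsilon)=:\Lambda<\infty$ (otherwise nothing to prove), I would first use the compactness result (Theorem~\ref{com}, invoked on subintervals) to pin down the partition $(x_0,\dots,x_m)$ and the exceptional set. On each open $A\Subset\mathcal{F}(u)\setminus\{x_1,\dots,x_{m-1}\}$ the sequences $\{u_\varepsilon\}$ and $\{\gamma_\varepsilon\}$ satisfy, after truncation at a large level $\ell$, the hypotheses of the $K<\infty$ liminf inequality localized to $A$; this yields $\liminf_\varepsilon E_\varepsilon(u_\varepsilon,\gamma_\varepsilon;A)\geq E(u,\gamma;A)$, where the semicolons denote the obvious localized functionals. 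Exhausting $\mathcal{F}(u)$ by such $A$ and using superadditivity of $E_\varepsilon(\cdot,\cdot;\cdot)$ over disjoint open sets recovers $\int_{\mathcal{F}(u)}|u'-g|^2 + c_0\int_{\mathcal{F}(u)}|g| + c_0|D^cu|(\mathcal{F}(u)) + 2\sum_{x\in J_u\cap\mathcal{F}(u)}f(\tfrac12|[u](x)|)$. It remains to account for the jumps at the transition points $x_i$ where $\alpha_i\neq\alpha_{i-1}$, i.e.\ at least one of the two values is $\pm\infty$, contributing $f(\infty)=c_0$ each. For these one localizes to a small neighborhood $I_\delta(x_i)$ and shows $\liminf_\varepsilon E_\varepsilon(u_\varepsilon,\gamma_\varepsilon;I_\delta(x_i))\geq c_0$ (or $2c_0$ if both sides are infinite): since $u_\varepsilon$ transitions from a bounded level to an arbitrarily large one across $I_\delta(x_i)$, the argument used for the jump estimate from below in the bounded case, applied at a level $\ell\to\infty$, shows the nonlocal $f$-term alone forces a contribution approaching $c_0$ per "escape to $\pm\infty$", using $\lim_{t\to\infty}f(t)=c_0$. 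Adding all pieces gives $\Lambda\geq E(u,\gamma)$.

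\textbf{Limsup inequality.} Given $(u,\gamma)$ with $E(u,\gamma)<\infty$, write $u=w+\sum_i\alpha_i\chi_{(x_{i-1},x_i)}$ as in~\eqref{eq:u-w-alphai}. On $\mathcal{F}(u)$ one has $\gamma=D^sw+g\mathcal{L}^1$ with $w\in BV$, $g\in L^1(\mathcal{F}(u))$, $w'-g\in L^2$; applying the bounded-$K$ limsup construction to $(w,\gamma\mrs\mathcal{F}(u))$ (with $K$ any constant exceeding $\norm{w}_{L^\infty}$) produces recovery sequences $(w_\varepsilon,\gamma_\varepsilon^{\mathcal{F}})$ on the finite part. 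On each interval $(x_{i-1},x_i)$ with $\alpha_i=\pm\infty$, define $u_\varepsilon$ to ramp from the boundary value of $w_\varepsilon$ to $\pm\ell(\varepsilon)$ (with $\ell(\varepsilon)\to\infty$ slowly) over a layer of width $\sim\varepsilon$ near the endpoint $x_i$ (or $x_{i-1}$), staying $\equiv\pm\ell(\varepsilon)$ elsewhere, and set $g_\varepsilon=u_\varepsilon'$ on that layer so the elastic term $\int|u_\varepsilon'-g_\varepsilon|^2$ vanishes there, while the nonlocal $f$-term on the layer is $\leq c_0$ up to $o(1)$ by the same computation as in the jump recovery sequence of the bounded case (with crack opening $\to\infty$ and $f(\infty)=c_0$). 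Gluing these pieces with cutoffs across widths $o(1)\to0$ and checking that the glue regions carry negligible energy (exactly as in Section~\ref{sec:upper-limit}), and verifying $u_\varepsilon\to u$ a.e.\ and $\gamma_\varepsilon\to\gamma$ flat-locally on $\mathcal{F}(u)\setminus\{x_1,\dots,x_{m-1}\}$, gives $\limsup_\varepsilon E_\varepsilon(u_\varepsilon,\gamma_\varepsilon)\leq E(u,\gamma)$.

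\textbf{Main obstacle.} The delicate point is the liminf estimate at the escape points $x_i$: one must show that the nonlocal convolution term $\frac1\varepsilon\int f(\varepsilon\dashint_{I_\varepsilon(x)}|g_\varepsilon|)\,\mathrm dx$ cannot become cheap when $u_\varepsilon$ runs off to infinity "gradually" over a region wider than $\varepsilon$ — i.e.\ one must rule out spreading the strain so thinly that $f$ stays in its linear regime yet the total $c_0\int|g_\varepsilon|$ over a shrinking region still stays below $c_0$. This is precisely where the coupling $u_\varepsilon'-g_\varepsilon\in L^2$ and the slicing/averaging arguments underlying the bounded-$K$ jump lower bound must be reexamined uniformly in the truncation level; controlling this uniformly, together with ensuring the exceptional intervals can indeed be taken to be finitely many and to concentrate at the $x_i$ (which is where Theorem~\ref{com} does the heavy lifting), is the technical heart of the proof.
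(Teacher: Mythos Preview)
Your overall plan matches the paper's: both inequalities are reduced to the bounded case by working on $A\Subset\mathcal{F}(u)\setminus\{x_1,\dots,x_{m-1}\}$ and treating the escape points separately. For the $\limsup$ your construction is essentially identical to the paper's (there $\ell(\varepsilon)=\varepsilon^{-1}$, with linear interpolation over $(x_i-\varepsilon^2,x_i+\varepsilon^2)$ flanked by $2\varepsilon$-plateaus, exactly as in Proposition~\ref{SBVupperbpundestimate}).

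For the $\liminf$, however, the paper takes a shorter path that bypasses your ``main obstacle'' entirely. Instead of localizing to $I_\delta(x_i)$ and re-running a jump lower bound at each escape point, the paper extracts $2c_0(m-1)$ in one stroke from the grid construction of Lemma~\ref{app}: the compactness proof yields $\#(\Ge\setminus\Gee)\geq m-1$ for small $\varepsilon$, so the term $2c_0\,\#(\Ge\setminus\Gee)$ in Lemma~\ref{app} already accounts for all the $x_i$ (this is Remark~\ref{rmk:en-lb}). The remaining energy on $A_\delta$ is then bounded below by $E(u,\gamma,A_\delta)$ via Theorem~\ref{generalliminf}, and one lets $\delta\searrow0$. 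Since each jump contributes at most $2c_0$ to $E(u,\gamma)$, the $2c_0(m-1)$ dominates the sum over $J_u\cap\{x_1,\dots,x_{m-1}\}$, and the inequality follows. No separate analysis near the $x_i$ is needed.

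Your worry about ``spreading the strain so thinly that $f$ stays in its linear regime yet $c_0\int|g_\varepsilon|$ over a shrinking region still stays below~$c_0$'' is in fact the wrong way round: if $f$ is linear on a region across which $u_\varepsilon$ escapes to $\pm\infty$, then $c_0\int|g_\varepsilon|\geq c_0\big(|u_\varepsilon(\bar x^+)-u_\varepsilon(\bar x^-)|-(\text{width})^{1/2}\|u_\varepsilon'-g_\varepsilon\|_{L^2}\big)\to\infty$, which is more than $2c_0$, not less. The saturation regime $f\equiv c_0$ is what could be cheap, and that is precisely what the grid counting in Lemma~\ref{app} captures. So the technical heart is indeed the compactness theorem, as you say, but once that is in hand the escape-point contribution is immediate.

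One minor slip: at an infinite jump the limiting cost is $2f(\infty)=2c_0$ regardless of whether one or both neighboring intervals escape; your ``$c_0$ per escape, $2c_0$ if both sides are infinite'' accounting is off by a factor.
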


The corresponding compactness result for $K=\infty$ with respect to this particular convergence is stated in Theorem~\ref{com-K-infty}. 

\begin{remark}
In fact, $E(u, \gamma)$ can be finite only if the restriction of $u$ to $\mathcal{F}(u)$ is a $BV$ function and not merely an element of $GBV(\mathcal{F}(u))$. In particular, if $E(u, \gamma) < \infty$ and $u \in L^1(a, b)$, then $u \in BV(a, b)$. 
\end{remark}

\begin{remark}
Our methods can easily be adapted to obtain an alternative asymptotic description by considering renormalized functionals: From the above partition $\mathcal{P}$ one can derive a coarser one (whose members are finite unions of intervals) so that on each such set one has an $L^\infty$ bound on $u_\varepsilon$ modulo a single additive constant and the mutual distance of $u_\varepsilon$ on two different sets diverges. This allows for an asymptotic description also of those parts that escape to infinity.  
\end{remark}

\begin{remark}
Our results remain true if restricted to preassigned boundary values $u(a) = u_a$, $u(b) = u_b$, where $u_a, u_b \in \R$ such that $-K \le u_a, u_b \le K$. As for a bounded energy sequence parts of the jump set could accumulate at the boundary $\{a, b\}$, a usual way to implement boundary conditions is to consider $E^{u_a,u_b}_\varepsilon$ and $E^{u_a,u_b}$ on an extended interval $(a - \eta, b + \eta)$, with $\eta > 0$ fixed, which are defined as $E_{\varepsilon}$ and~$E$, respectively, before but with the additional constraints $E^{u_a,u_b}_\varepsilon(u,\gamma) = E^{u_a,u_b}(u, \gamma) = \infty$ if not $u = u_a$ a.e.\ on $(a - \eta, a)$ and $u = u_b$ a.e.\ on $(b, b + \eta)$ and $\gamma \mrs ( (a - \eta, a) \cup (b, b + \eta) ) = 0$. So if $u$ does not satisfy the given boundary values on $(a, b)$ in the limiting problem, this leads to an extra energy cost: 
\[ E^{u_a,u_b} (u, \gamma) 
   = E \big( u|_{(a,b)}, \gamma \mrs (a,b) \big) 
     + 2 f \left( \frac{1}{2} |u(a+) - u_a| \right) + 2 f \left( \frac{1}{2} |u(b-) - u_b| \right). \] 
Indeed, the $\Gamma$-$\liminf$ inequality and the compactness property are direct consequences of the case with free boundaries. The $\Gamma$-$\limsup$ inequality for $K < \infty$ follows from the observation that the recovery sequence constructed in Proposition~\ref{SBVupperbpundestimate} indeed satisfies $u_{\varepsilon} = u$ near $\{a,b\}$ and from Remark~\ref{rmk:SBV-relax-bv}. The case $K = \infty$ is a direct consequence as there the recovery sequence is built as in the case $K < \infty$ near $\{a,b\}$ since $u_a, u_b \in \R$. 
\end{remark}

\begin{remark}\label{rmk:gen-stored-en}
Our results can also be adapted to general continuous stored energy functions~$W$ leading to a general non-quadratic bulk contribution $\int_a^b W(u' - g) \dd x$, whenever~$W$ satisfies a $p$-growth condition of the form $c|r|^p - C \le W(r) \le C|r|^p + C$ for suitable constants $c, C > 0$ and some $p \in (1, \infty)$, and for convenience we also assume that $\min W = W(0) = 0$. The first term in the limiting functional is then replaced by $\int_a^b W^{**}(u' - g) \dd x$, respectively, $\int_{\mathcal{F}(u)} W^{**}(u' - g) \dd x$, where~$W^{**}$ is the convex envelope of~$W$. In fact, making use of the estimate $W \ge W^{**}$, compactness and the $\Gamma$-$\liminf$ inequality follow exactly as before with $W^{**}$ instead of $|\cdot|^2$ by taking account of the obvious adaptions such as replacing $L^2$ by $L^p$ and $SBV^2$ by $SBV^p$. The $\Gamma$-$\limsup$ inequality requires an extra relaxation step, which is detailed in Remark~\ref{rmk:W-Wrelax}, and is otherwise straightforward as well. 
\end{remark}

For completeness we also give the corresponding approximation results for the minimal energies with respect to the second variable $\gamma$, which are defined as 
\begin{equation}
\label{def_minimal_energy_eps}
\tilde{E}_{\varepsilon}(u) \coloneqq  \inf_{\gamma \in \M} E_{\varepsilon}(u,\gamma) = \inf_{g \in L^1(a,b)} E_{\varepsilon}(u,g \mathcal{L}^1)
\end{equation}
and 
\begin{equation}
\label{def_minimal_energy_limit}
\tilde{E}(u) \coloneqq  \inf_{g \in L^1(a,b)} E(u,D^su+ g \mathcal{L}^1), 
\end{equation}
for $u \in L^1(a,b)$ and $u \in L^0((a,b), \overline{\R})$, respectively. As a direct consequence of the previous $\Gamma$-convergence result we obtain: 

\begin{corollary}\label{mainresult-K-infty}
The family $\{\tilde{E}_{\varepsilon}\}_{\varepsilon>0}$ $\Gamma$-converges to $\tilde{E}$, on $L^1(a,b)$ equipped with the strong topology if $K<\infty$ and with respect to convergence a.e.\ on $L^0((a,b), \overline{\R})$ if $K=\infty$. 
\end{corollary}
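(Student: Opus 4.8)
The plan is to deduce this from Theorems~\ref{mainresult} and~\ref{mainresultKinfty} together with the compactness Theorems~\ref{com} and~\ref{com-K-infty}, via the standard mechanism by which partial minimization is compatible with $\Gamma$-convergence once one has compactness in the minimized variable. Write $\mathcal{X}$ for $L^1(a,b)$ with the strong topology if $K<\infty$, and for $L^0((a,b),\overline{\R})$ with a.e.\ convergence (equivalently, convergence in measure with respect to a bounded metric on $\overline{\R}$, which is metrizable) if $K=\infty$. The first observation, elementary from the definitions of $E$ and $\tilde E$, is that
\[
  \tilde E(u) \;=\; \inf_{\gamma \in \M} E(u,\gamma) \qquad \text{for all } u,
\]
since a finite value $E(u,\gamma)$ forces $\gamma$ (on $\mathcal{F}(u)$, when $K=\infty$) to be of the admissible form $D^su + g\mathcal{L}^1$, and conversely every such $\gamma$ is a competitor in~\eqref{def_minimal_energy_limit}. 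In particular $E(u,\gamma)\ge \tilde E(u)$ for every $\gamma$, and $\tilde E_\varepsilon(u)\le E_\varepsilon(u,\gamma)$ for every $\gamma$.

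For the $\Gamma\text{-}\liminf$ inequality, let $u_\varepsilon \to u$ in $\mathcal{X}$. Passing to a subsequence realizing the lower limit we may assume $\lim_\varepsilon \tilde E_\varepsilon(u_\varepsilon)$ exists, and we may assume it is finite (otherwise there is nothing to prove). For each $\varepsilon$ pick $\gamma_\varepsilon \in \M$ with $E_\varepsilon(u_\varepsilon,\gamma_\varepsilon) \le \tilde E_\varepsilon(u_\varepsilon) + \varepsilon$, so that $\sup_\varepsilon E_\varepsilon(u_\varepsilon,\gamma_\varepsilon) < \infty$. By the compactness result (Theorem~\ref{com} if $K<\infty$, Theorem~\ref{com-K-infty} if $K=\infty$) there is a further subsequence along which $\gamma_\varepsilon$ converges to some $\gamma$ in the relevant sense (flat norm, resp.\ flat norm locally on $\mathcal{F}(u)$ off a finite set), the first component necessarily being the given limit $u$ by uniqueness of limits. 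Applying the $\Gamma\text{-}\liminf$ inequality of Theorem~\ref{mainresult}, resp.\ Theorem~\ref{mainresultKinfty}, to the pair $(u_\varepsilon,\gamma_\varepsilon)$ yields
\[
  \lim_\varepsilon \tilde E_\varepsilon(u_\varepsilon) \;\ge\; \liminf_\varepsilon E_\varepsilon(u_\varepsilon,\gamma_\varepsilon) \;\ge\; E(u,\gamma) \;\ge\; \tilde E(u),
\]
which is the claim.

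For the $\Gamma\text{-}\limsup$ inequality, fix $u$ with $\tilde E(u)<\infty$ (the case $\tilde E(u)=\infty$ being vacuous) and let $\delta>0$. By the identity above there is $\gamma\in\M$ with $E(u,\gamma)\le \tilde E(u)+\delta$. Theorem~\ref{mainresult}, resp.\ Theorem~\ref{mainresultKinfty}, provides a recovery sequence $(u^\delta_\varepsilon,\gamma^\delta_\varepsilon)$ converging to $(u,\gamma)$ in the required topology with $\limsup_\varepsilon E_\varepsilon(u^\delta_\varepsilon,\gamma^\delta_\varepsilon) \le E(u,\gamma) \le \tilde E(u)+\delta$. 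Since $\tilde E_\varepsilon(u^\delta_\varepsilon) \le E_\varepsilon(u^\delta_\varepsilon,\gamma^\delta_\varepsilon)$, we get $u^\delta_\varepsilon \to u$ in $\mathcal{X}$ and $\limsup_\varepsilon \tilde E_\varepsilon(u^\delta_\varepsilon) \le \tilde E(u)+\delta$. A standard diagonalization argument (Attouch's lemma; this uses metrizability of $\mathcal{X}$), letting $\delta = \delta_k \searrow 0$, then produces a single sequence $u_\varepsilon \to u$ in $\mathcal{X}$ with $\limsup_\varepsilon \tilde E_\varepsilon(u_\varepsilon) \le \tilde E(u)$.

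The only nontrivial ingredient — and the step I would single out as the main obstacle — is the extraction of a convergent subsequence of the almost-minimizers $\gamma_\varepsilon$ in the $\liminf$ part: this is precisely what the compactness Theorems~\ref{com} and~\ref{com-K-infty} deliver, and it is essential that they hold without any a priori functional relation between $u_\varepsilon'$ and $g_\varepsilon$, these being coupled only through the $L^2$-control of $u_\varepsilon' - g_\varepsilon$ that the energy bound provides. Everything else is the routine transfer of $\Gamma$-convergence through partial minimization, together with the elementary identity $\inf_\gamma E(u,\gamma) = \tilde E(u)$.
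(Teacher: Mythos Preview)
Your $\Gamma$-$\liminf$ argument is exactly the paper's: pick almost-minimizing $\gamma_\varepsilon$, invoke compactness (Theorems~\ref{com} and~\ref{com-K-infty}) to extract a limit $\gamma$, and apply the $\liminf$ inequality of Theorems~\ref{mainresult} and~\ref{mainresultKinfty}.

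For the $\Gamma$-$\limsup$ the paper is more direct: since the infimum $\tilde E(u)=\inf_\gamma E(u,\gamma)$ is \emph{attained} at the explicit minimizer $\gamma^{\mathrm{opt}}$ of~\eqref{optimization_problem} (see~\eqref{gammaaopt} and~\eqref{optgamma}), one applies the $\limsup$ part of Theorem~\ref{mainresult} (resp.\ Theorem~\ref{mainresultKinfty}) once to the pair $(u,\gamma^{\mathrm{opt}})$ and is done---no $\delta$-approximation, no diagonalization. Your route via approximate minimizers is perfectly legitimate and more general (it would work even if the infimum were not attained), but there is a slip in the $K=\infty$ case: a.e.\ convergence on $L^0((a,b),\overline{\R})$ is \emph{not} equivalent to convergence in measure (only the forward implication holds on finite-measure spaces) and is not metrizable, so the appeal to ``Attouch's lemma; this uses metrizability of $\mathcal{X}$'' is not justified as written. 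The diagonalization can still be carried out for a.e.\ convergence by an Egorov-type argument (choose the switching indices so that $u^k_\varepsilon$ is uniformly $2^{-k}$-close to $u$ off a set of measure $2^{-k}$), or one simply bypasses the issue by exploiting that the infimum is attained, as the paper does.
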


\section{Preliminaries} \label{sec_preliminaries}

In this section, we recall some basics on $BV$-functions, for simplicity on a one-dimensional interval $(a,b)\subset \R$, and convergence of measures.  

\paragraph{Functions of bounded variation.} A function $u\in L^1(a,b)$ is said to belong to the space $BV(a,b)$ of \emph{functions of bounded variation} if its distributional derivative
is a finite Radon measure, i.e, if the integration-by-parts formula
\begin{equation*}
\int_{a}^b u \varphi' \dd x = - \int_{a}^b \varphi \dd Du \quad \text{ for every } \varphi \in C_c^{1}(a,b)
\end{equation*}
is valid for a (unique) measure $Du \in \mathcal{M}(a,b)$. The space $BV(a,b)$ is a Banach space endowed with the norm
\begin{equation*}
\norm{u}_{BV(a,b)} \coloneqq  \norm{u}_{L^1(a,b)} + |Du|(a,b),
\end{equation*}
where $|Du|(a,b)$ is the total variation of~$Du$. We here collect some basic facts from~\cite{afp} for functions of bounded variation, which are relevant for our paper.  

We recall the notions of weak-$*$ and strict convergence for sequences $\{u_n\}_n$ in $BV(a,b)$, which are useful for compactness properties and approximation arguments, respectively. We say that $\{u_n\}_n$ \emph{converges weakly-$*$} to $u \in BV(a,b)$, denoted by $u_n \overset{*}{\rightharpoonup} u$, if $u_n \to u$ in $L^1(a,b)$ and $Du_n \overset{*}{\rightharpoonup} Du$ in $\mathcal{M}(a,b)$. We notice that every weakly-$*$ converging sequence in $BV(a,b)$ is norm-bounded by Banach--Steinhaus, while every norm-bounded sequence in $BV(a,b)$ contains a weakly-$*$ converging subsequence (see \cite[Theorem~3.23]{afp}). We further say that $\{u_n\}_n$ \emph{converges strictly} to $u \in BV(a,b)$ if  $u_n \to u$ in $L^1(a,b)$ and $|Du_n|(a,b) \to |Du|(a,b)$. As a matter of fact, the space $C^{\infty}(a,b)$ is dense in $BV(a,b)$ with respect to the strict topology (see \cite[Theorem~3.9]{afp}). 

We next discuss approximate continuity and discontinuity properties of a function $u \in L^1_{\textnormal{loc}}(a,b)$. We say that~$u$ has an \emph{approximate limit} at $x \in (a,b)$ if there exists a (unique) $\widetilde{u}(x) \in \R$ such that
\begin{equation*}
\lim_{\rho \to 0^+} \dashint_{(x-\rho,x+\rho) \cap (a,b)} |u(y)-\widetilde{u}(x)| \dd y =0.  
\end{equation*}
We denote by~$S_u$ the set, where this condition fails, and call it the \emph{approximate discontinuity set} of~$u$. It is $\mathcal{L}^1$-negligible, and $\widetilde{u}$ coincides $\mathcal{L}^1$-a.e.~in $(a,b) \setminus S_u$ with~$u$.  Furthermore, we say that~$u$ has an \emph{approximate jump point} at $x \in S_u$ if there exist (unique) $u(x+), u(x-) \in \R$ with $u(x+) \neq u(x-)$ such that 
\begin{equation*}
\lim_{\rho \to 0^+} \dashint_{(x,x+\rho)} |u(y)-u(x+)| \dd y =0 \quad \text{and} \quad \lim_{\rho \to 0^+} \dashint_{(x-\rho,x)} |u(y)-u(x-)| \dd y =0.
\end{equation*}
We denote by~$J_u$ the set of approximate jump points and call it the \emph{jump set} of~$u$. Notice that $u(x+)$ and $u(x-)$ can be considered as one-sided limits from the right and from the left, respectively.

For $u \in BV(a,b)$ the set~$J_u$ coincides with~$S_u$ and is at most countable. It is also convenient to work with the precise representative
\begin{equation*}
u^\ast(x)  \coloneqq 
\begin{cases}
\widetilde{u}(x) & \quad \text{if } x \in (a,b)\setminus J_u,\\
[u](x)/2 & \quad \text{if } x\in J_u.
\end{cases}
\end{equation*}
According to the Radon--Nikod\'ym theorem the measure derivative~$Du = D^au \mathcal{L}^1 + D^su$ can be decomposed into the absolutely continuous and the singular part with respect to the Lebesgue measure~$\mathcal{L}^1$. We then define the jump and the Cantor part of~$Du$ as
\begin{equation*}
D^j u  \coloneqq  D^s u \mrs J_u \quad \text{and} \quad  D^c u  \coloneqq   D^s u \mrs ((a,b)\setminus J_u).
\end{equation*}
From the identifications $D^au =  u' \mathcal{L}^1$ with the approximate gradient~$u'$ for the absolutely continuous part and $D^ju = [u] \mathcal{H}^0  \mrs J_u$ with $[u] \coloneqq u(\cdot \, +) - u(\cdot \, -)$ for the jump part (see \cite[Theorem~3.83 and formula (3.90)]{afp}) we arrive at the decomposition
\begin{equation*}
Du= u' \mathcal{L}^1 + [u] \mathcal{H}^0  \mrs J_u + D^c u.
\end{equation*} 
We can actually decompose the function~$u$ as  
\begin{equation}
\label{decomposition_u_1d}
 u = u_a+u_j+u_c
\end{equation}
for an absolutely continuous function~$u_a \in W^{1,1}(a,b)$ with $Du_a = D^a u$, a jump function~$u_j$ with $Du_j=D^ju$, and a (continuous) Cantor function $u_c$ with $Du_c=D^cu$ (notice that these functions are determined uniquely up to additive constants). Thus, the decomposition of~$Du$ is recovered from a corresponding decomposition of the function itself (which for $BV$-functions defined on open subsets of~$\R^d$ with $d>1$ in general is not possible). 

We finally mention the subspace $SBV(a,b)$ of \emph{special functions of bounded variation}, which contains all functions $u \in BV(a,b)$ with $D^cu= 0$. In addition, we define
\begin{equation*}
SBV^2(a,b) \coloneqq  \big\lbrace u \in SBV(a,b) \colon u' \in L^2(a,b) \text{ and } \mathcal{H}^0 (J_u) < \infty \big\rbrace.
\end{equation*}

\paragraph{Convergence in negative Sobolev spaces and in the flat topology.} The negative Sobolev spaces $W^{-1,q}(a,b)$ with $1 < q \leq \infty$ are defined as usual as the dual spaces of $W^{1,q'}_0(a,b)$ (with $q' \in [1,\infty)$ denoting the conjugate exponent to~$q$ with $\tfrac{1}{q} + \tfrac{1}{q'}=1$), and correspondingly the norm is defined via the duality pairing as 
\begin{equation*}
 \| T \|_{W^{-1,q}(a,b)}  \coloneqq  \sup \Big \{ T(\varphi) \colon \varphi \in W^{1,q'}_0(a,b) \text{ with }\norm{\varphi}_{W^{1,q'}(a,b)} \leq 1 \Big\},
\end{equation*}
for every $T \in W^{-1,q}(a,b)$. Consequently, the spaces $W^{-1,q}(a,b)$ with $1 < q < \infty$ are reflexive and separable. For later purposes, we mention two specific situations. Let $v \in L^q(a,b)$ and $w \in L^r(a,b)$ with~$1 \leq r \leq \infty$ such that the embedding $W^{1,q'}_0(a,b) \subset L^{r'}(a,b)$ is continuous. If we set
\begin{equation*}
 T_{Dv}(\varphi)  \coloneqq  - \int_a^b v \varphi' \diff x \quad \text{and} \quad  T_w(\varphi)  \coloneqq  \int_a^b w \varphi \diff x \quad \text{for all } \varphi \in W^{1,q'}_0(a,b), 
\end{equation*}
then, by the H{\"o}lder inequality and the continuous embedding (with constant~$C'$), we obtain $T_{Dv}, T_w \in W^{-1,q}(a,b)$ with
\begin{equation}
\label{eq:neg_Sobolev_trivial_estimate}
 \| T_{Dv} \|_{W^{-1,q}(a,b)} \leq \norm{v}_{L^q(a,b)} \quad \text{and} \quad \| T_w \|_{W^{-1,q}(a,b)} \leq C' \norm{w}_{L^r(a,b)}. 
\end{equation}
Because of the continuous and dense embedding $W^{1,1}_0(a,b) \subset C_0(a,b)$, the negative Sobolev norms can actually be considered on the space~$\M$ of all finite Radon measures on~$(a,b)$, for which the duality pairing reads as 
\begin{equation*}
 \| \mu \|_{W^{-1,q}(a,b)} = \sup \bigg\{ \int_a^b \varphi \dd \mu \colon \varphi \in W^{1,q'}_0(a,b) \text{ with } \norm{\varphi}_{W^{1, q'}_0(a,b)}\leq 1 \bigg\}
\end{equation*}
for $\mu \in \M$. If we allow $q' = \infty$ in this expression, we obtain the flat norm
\begin{equation*}
\normf{\mu} \coloneqq  \sup \bigg\{ \int_a^b \varphi \dd \mu \colon \varphi \in W^{1, \infty}_0(a,b) \text{ with } \norm{\varphi}_{W^{1, \infty}_0(a,b)}\leq 1 \bigg\},
\end{equation*}
for $\mu \in \M$. Here we have the inequalities 
\begin{equation}
\label{eq:flat_trivial_estimate}
 \normf{Dv} \leq  \norm{v}_{L^1(a,b)}  \quad \text{and} \quad  \|w \mathcal{L}^1\|_{\text{flat}} \leq \norm{w}_{L^1(a,b)} 
\end{equation}
for all functions $v \in BV(a,b)$ and $w \in L^1(a,b)$. Let us still notice that due to Schauder's theorem and the compact embedding $W^{1,\infty}_0(a,b) \Subset C_0(a,b)$, the flat topology metrizes weak-$\ast$ convergence of (uniformly bounded) measures. Therefore, we have the following relations for the convergence of measure with respect to convergence in $W^{-1,q}(a,b)$, the flat norm and in the weak-$\ast$ sense. 

\begin{lemma}[on convergence of measures] 
\label{Lemma_weak_negativ_flat}
For a measure $\mu \in \M$ and a sequence $\{\mu_n\}_{n \in \N}$ of measures in $\M$, we have: 
\begin{enumerate}[font=\normalfont, label=(\roman{*}), ref=(\roman{*})]
 \item If $\mu_n \to \mu$ in $W^{-1,q}(a,b)$ for some  $1 < q \leq \infty$, then $\mu_n \to \mu$ in the flat norm.
 \item If $\mu_n \overset{*}{\rightharpoonup} \mu$ in $\M$, then $\mu_n \to \mu$ in the flat norm.
\item If $\mu_n \to \mu$ in the flat norm and $\sup_{n \in \N} |\mu_n|(a,b) < \infty$, then $\mu_n \overset{*}{\rightharpoonup} \mu$ in $\M$.
\end{enumerate}
\end{lemma}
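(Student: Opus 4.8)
The plan is to prove the three items separately. Items (i) and (iii) reduce to pure duality computations exploiting the continuous and dense inclusions among the spaces $W^{1,\infty}_0(a,b)$, $W^{1,q'}_0(a,b)$, $C_c^\infty(a,b)$ and $C_0(a,b)$ on the bounded interval $(a,b)$, together with the fact recalled above that every finite Radon measure in $\M$ acts as a bounded functional on each $W^{1,q'}_0(a,b)$, $q' \in [1,\infty)$, and on $W^{1,\infty}_0(a,b)$. Item (ii) is the only place where a genuine compactness input enters.

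For item (i): since $(a,b)$ is bounded and $q' \in [1,\infty)$, the inclusion $W^{1,\infty}_0(a,b) \hookrightarrow W^{1,q'}_0(a,b)$ is continuous, say with constant $C = C(q',a,b)$. I would test $\mu_n - \mu$ against an arbitrary $\varphi$ in the unit ball of $W^{1,\infty}_0(a,b)$: by the duality definition of the negative Sobolev norm one has $\int_a^b \varphi \dd (\mu_n - \mu) \le \|\varphi\|_{W^{1,q'}_0(a,b)} \, \|\mu_n - \mu\|_{W^{-1,q}(a,b)} \le C \, \|\mu_n - \mu\|_{W^{-1,q}(a,b)}$, and taking the supremum over all such $\varphi$ yields $\normf{\mu_n - \mu} \le C \, \|\mu_n - \mu\|_{W^{-1,q}(a,b)} \to 0$.

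For item (ii): first, a weak-$*$ convergent sequence in $\M = (C_0(a,b))^*$ is pointwise bounded on the Banach space $C_0(a,b)$, so by the uniform boundedness principle $M \coloneqq \sup_n |\mu_n|(a,b) < \infty$. The key ingredient is the compact embedding $W^{1,\infty}_0(a,b) \Subset C_0(a,b)$: functions in its unit ball are uniformly Lipschitz and vanish at $a$ and $b$, hence form an equicontinuous, uniformly bounded family, so the Arzel\`{a}--Ascoli theorem gives relative compactness in $C_0(a,b)$. I would then argue by contradiction: if $\normf{\mu_n - \mu} \not\to 0$, then along a subsequence there are $\delta > 0$ and functions $\varphi_n$ in the unit ball of $W^{1,\infty}_0(a,b)$ with $\int_a^b \varphi_n \dd (\mu_n - \mu) \ge \delta$; passing to a further subsequence with $\varphi_n \to \varphi$ in $C_0(a,b)$ and splitting $\int_a^b \varphi_n \dd (\mu_n - \mu) = \int_a^b (\varphi_n - \varphi) \dd (\mu_n - \mu) + \int_a^b \varphi \dd (\mu_n - \mu)$, the first term is bounded by $\|\varphi_n - \varphi\|_{L^\infty(a,b)} \, (M + |\mu|(a,b)) \to 0$ while the second tends to $0$ by weak-$*$ convergence since $\varphi \in C_0(a,b)$, contradicting the lower bound. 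Equivalently, one may invoke Schauder's theorem: the adjoint of the compact embedding $W^{1,\infty}_0(a,b) \hookrightarrow C_0(a,b)$ is the compact operator sending a measure to the functional that defines its flat norm, and weak-$*$-to-weak-$*$ continuity of the adjoint identifies the limit, so bounded weak-$*$ convergent sequences are mapped to norm-convergent ones.

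For item (iii): given $\varphi \in C_0(a,b)$ and $\epsilon > 0$, I would use the density of $C_c^\infty(a,b) \subset W^{1,\infty}_0(a,b)$ in $C_0(a,b)$ to choose $\psi$ with $\|\varphi - \psi\|_{L^\infty(a,b)} < \epsilon$ and then estimate $\big| \int_a^b \varphi \dd (\mu_n - \mu) \big| \le \|\varphi - \psi\|_{L^\infty(a,b)} \, ( |\mu_n|(a,b) + |\mu|(a,b) ) + \|\psi\|_{W^{1,\infty}_0(a,b)} \, \normf{\mu_n - \mu}$. Using the uniform mass bound $\sup_n |\mu_n|(a,b) =: M < \infty$, the first term is $\le \epsilon (M + |\mu|(a,b))$ and the second vanishes as $n \to \infty$; hence $\limsup_n \big| \int_a^b \varphi \dd \mu_n - \int_a^b \varphi \dd \mu \big| \le \epsilon (M + |\mu|(a,b))$, and letting $\epsilon \to 0$ gives weak-$*$ convergence $\mu_n \weakstar \mu$ in $\M$. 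The only step beyond routine duality bookkeeping is the compactness $W^{1,\infty}_0(a,b) \Subset C_0(a,b)$ invoked in item (ii), and I do not expect a real obstacle there — only the need to check carefully that the unit ball of $W^{1,\infty}_0(a,b)$ is genuinely equicontinuous and that pairings of finite Radon measures against continuous functions behave as the formal manipulations suggest.
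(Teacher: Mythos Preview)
Your proposal is correct and aligns with the paper's approach. The paper does not give a detailed proof of this lemma; it only remarks, immediately before the statement, that ``due to Schauder's theorem and the compact embedding $W^{1,\infty}_0(a,b) \Subset C_0(a,b)$, the flat topology metrizes weak-$\ast$ convergence of (uniformly bounded) measures,'' and your argument for (ii) is precisely a fleshing-out of that hint (you even offer both the Schauder route and the equivalent Arzel\`a--Ascoli contradiction argument), while your proofs of (i) and (iii) are the natural duality/density computations the paper leaves implicit.
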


\section{Compactness}\label{sec: compactness1d}
In this section we establish a compactness result for sequences in $L^1(a,b) \times \mathcal{M}(a,b)$ with bounded energy~$E_\varepsilon$. This result together with a ${\Gamma\text{-convergence}}$ result implies the convergence of minimizers and the corresponding minimum values. In order to bound suitable norms of the two fields in terms of the energy, we first prove the following technical lemma:

\begin{lemma}\label{app}
Let $g \in L^1(a,b)$. For $\varepsilon >0$ there exists $x_\varepsilon \in \R$ such that the grid   
\begin{equation*}
\Ge  \coloneqq  \big\{ x_\alpha \coloneqq x_\varepsilon + 2 \varepsilon \alpha \colon \alpha \in \Z \text{ and } x_\alpha \in (a + \varepsilon, b - \varepsilon) \big\}
\end{equation*}
contains a subset $\Gee \subset \Ge$ with 
\begin{equation*}
\int_{\bigcup \{I_\varepsilon(x_\alpha) \colon x_\alpha \in \Gee\}} |g|\diff t + 2\# (\Ge \setminus \Gee) \leq \frac{1}{c_0 \varepsilon} \int_a^b f \bigg( \varepsilon \dashint_{I_\varepsilon(x)\cap (a,b)}{ |g|\diff t } \bigg) \dd x .
\end{equation*}
\end{lemma}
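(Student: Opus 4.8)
The plan is to fix the grid by a first-moment (averaging) argument over the admissible offsets, letting $\Gee$ consist of exactly those grid points at which the local mean value of $|g|$ is not too large. Abbreviate $\phi(x) \coloneqq \varepsilon \dashint_{I_\varepsilon(x) \cap (a,b)} |g| \diff t$, so that the right-hand side of the assertion is $\tfrac{1}{c_0 \varepsilon} \int_a^b f(\phi(x)) \diff x$. If $2\varepsilon \ge b-a$ the grid $\Ge$ is empty, $\Gee = \Ge = \emptyset$, and the left-hand side vanishes, so we may assume $2\varepsilon < b - a$. Then every $x_\alpha \in \Ge$ satisfies $I_\varepsilon(x_\alpha) \subset (a,b)$, hence $\phi(x_\alpha) = \tfrac12 \int_{I_\varepsilon(x_\alpha)} |g| \diff t$, and since consecutive grid points are at distance $2\varepsilon$ while $|I_\varepsilon(x_\alpha)| = 2\varepsilon$, the intervals $\{ I_\varepsilon(x_\alpha) \colon x_\alpha \in \Ge \}$ are pairwise disjoint up to endpoints, so that every integral over a union of such intervals decomposes into the corresponding finite sum.

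First I would record an exact identity coming from the explicit shape of $f$. Setting $\Gee \coloneqq \{ x_\alpha \in \Ge \colon \phi(x_\alpha) \le 1 \}$, a point $x_\alpha \in \Gee$ contributes $\int_{I_\varepsilon(x_\alpha)} |g| \diff t = 2 \phi(x_\alpha) = \tfrac{2}{c_0} f(\phi(x_\alpha))$ to the first term on the left (using $f(t) = c_0 t$ for $t \le 1$), whereas a point $x_\alpha \in \Ge \setminus \Gee$ has $\phi(x_\alpha) > 1$ and contributes $2 = \tfrac{2}{c_0} f(\phi(x_\alpha))$ to the second term (using $f \equiv c_0$ on $[1,\infty)$). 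Summing over $\Ge$ and using the disjointness of the intervals gives, for every admissible choice of grid,
\[
  \int_{\bigcup \{ I_\varepsilon(x_\alpha) \colon x_\alpha \in \Gee \}} |g| \diff t + 2 \#(\Ge \setminus \Gee) = \frac{2}{c_0} \sum_{x_\alpha \in \Ge} f(\phi(x_\alpha)) .
\]
It therefore suffices to produce an offset for which $2\varepsilon \sum_{x_\alpha \in \Ge} f(\phi(x_\alpha)) \le \int_a^b f(\phi(x)) \diff x$.

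For this I would average over the grid offset. For $t \in [0, 2\varepsilon)$ write $\Ge(t) \coloneqq \{ t + 2\varepsilon \alpha \colon \alpha \in \Z \} \cap (a+\varepsilon, b-\varepsilon)$ for the grid with offset $t$. Since $(t, \alpha) \mapsto t + 2\varepsilon\alpha$ maps $[0,2\varepsilon) \times \Z$ bijectively onto $\R$, Tonelli's theorem together with the substitution $y = t + 2\varepsilon\alpha$ gives
\begin{align*}
  \frac{1}{2\varepsilon} \int_0^{2\varepsilon} \Big( 2\varepsilon \sum_{x_\alpha \in \Ge(t)} f(\phi(x_\alpha)) \Big) \diff t
  & = \int_0^{2\varepsilon} \sum_{\alpha \in \Z} \mathbbm{1}_{(a+\varepsilon, b-\varepsilon)}(t + 2\varepsilon\alpha) \, f\big( \phi(t + 2\varepsilon\alpha) \big) \diff t \\
  & = \int_{a+\varepsilon}^{b-\varepsilon} f(\phi(y)) \diff y \le \int_a^b f(\phi(y)) \diff y ,
\end{align*}
the last inequality because $f \ge 0$. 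Hence some $t_0 \in [0, 2\varepsilon)$ attains a value of $2\varepsilon \sum_{x_\alpha \in \Ge(t_0)} f(\phi(x_\alpha))$ no larger than this average; taking $x_\varepsilon \coloneqq t_0$ (so that $\Ge = \Ge(t_0)$) and $\Gee$ as above, and combining with the displayed identity, yields the assertion.

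The argument is essentially a pigeonhole/first-moment estimate, so there is no serious obstacle. The points that require care are the passage to the exact identity of the second step — which hinges precisely on the normalizations $\lim_{t \to 0^+} f(t)/t = c_0$ and $f \equiv c_0$ on $[1,\infty)$ — the observation that no boundary truncation occurs in $\phi(x_\alpha)$ because all grid points lie in $(a+\varepsilon, b-\varepsilon)$, and the separate (trivial) treatment of the degenerate case $2\varepsilon \ge b - a$.
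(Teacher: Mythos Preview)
Your proof is correct and follows essentially the same approach as the paper: choose the grid offset by a mean-value/averaging argument and then split $\Ge$ according to whether $\phi(x_\alpha)$ lies below or above the threshold $1$, using the explicit form of $f$ to recover the two terms on the left-hand side. The only cosmetic difference is that the paper packages the averaging step by citing a lemma from Braides--Dal~Maso (applied to $f\circ\phi$ multiplied by a smooth cut-off supported in $(a,b)$ and equal to $1$ on $(a+\varepsilon,b-\varepsilon)$), whereas you carry it out directly via Tonelli and the substitution $y=t+2\varepsilon\alpha$; the content is the same.
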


\begin{proof}
We proceed analogously to \cite[proof of Lemma 4.2]{LuVi97}.  Let $\phi_{\varepsilon} \in C^{\infty}_0(a, b)$ be a cut-off function with $0 \leq \phi_{\varepsilon} \leq 1$ in $(a,b)$ and $\phi_{\varepsilon} \equiv 1$ in $(a + \varepsilon,b - \varepsilon)$. We then consider $\psi_{\varepsilon} \in C_0(\R)$ defined via 
\begin{equation*}
\psi_{\varepsilon}(x) \coloneqq  \phi_{\varepsilon}(x)\, f \bigg( \varepsilon \dashint_{I_\varepsilon(x)\cap (a,b)}{ |g|\diff t } \bigg)
\end{equation*}
for $x \in (a,b)$ and  $\psi_{\varepsilon}(x) \coloneqq 0$ for $x \in \R \setminus (a,b)$. The application of \cite[Lemma 4.2]{BDM97} (with $\eta = 2 \varepsilon$), which is a consequence of the mean value theorem for integrals, shows that 
\begin{equation*}
 \int_{\R} \psi_\varepsilon \dd x= 2 \varepsilon \sum_{\alpha \in \Z} \psi_\varepsilon (x_\varepsilon+ 2 \varepsilon \alpha)
\end{equation*}
holds for a suitable $x_\varepsilon \in \R$. By non-negativity of~$f$, the choice of the cut-off function~$\phi_\varepsilon$ and the definition of $\Ge$, we hence have
\begin{align}
 \frac{1}{\varepsilon} \int_a^b f \bigg( \varepsilon \dashint_{I_\varepsilon(x)\cap (a,b)}{ |g|\diff t } \bigg) \dd x 
 & \geq  \frac{1}{\varepsilon} \int_\R \psi_{\varepsilon}(x) \diff x \nonumber \\
 & \geq 2 \sum_{x_\alpha \in \Ge} \psi_{\varepsilon}(x_{\alpha}) 
 =2\sum_{x_\alpha \in \Ge} f \bigg( \varepsilon  \dashint_{I_\varepsilon(x_{\alpha}) }{ |g| \diff t }\bigg). \label{eq_f_integral_mv}
\end{align}
If we now set 
\begin{equation*}
\Gee \coloneqq  \bigg\{ x_\alpha \in \Ge \colon  \varepsilon \dashint_{I_\varepsilon(x_{\alpha})} |g|\diff t < 1 \bigg\},
\end{equation*}
then the claim follows directly from~\eqref{eq_f_integral_mv}, after rewriting the right-hand side via the definition of~$f$ as
\begin{align*}
 2\sum_{x_\alpha \in \Ge} f \bigg( \varepsilon  \dashint_{I_\varepsilon(x_{\alpha}) }{ |g| \diff t }\bigg) 
 & = 2 c_0 \sum_{x_\alpha \in \Gee} \varepsilon  \dashint_{I_\varepsilon(x_{\alpha}) }{ |g| \diff t } +  2\sum_{x_\alpha \in \Ge \setminus \Gee} c_0 \\
 & = c_0  \sum_{x_\alpha \in \Gee} \int_{I_\varepsilon(x_{\alpha}) }  |g| \diff t + 2c_0 \# (\Ge \setminus \Gee). \qedhere
\end{align*}
\end{proof}

We can now address the aforementioned compactness results.

\begin{theorem}[Compactness for $K < \infty$]\label{com}
Assume $K<\infty$. Let $\{(u_{\varepsilon},\gamma_\varepsilon)\}_{\varepsilon}$ be a sequence in $\LE \times \M$ with
\begin{equation*}
 E_{\varepsilon}(\ue,\gae)\leq C_0 
\end{equation*}
for a positive constant~$C_0$ and all $\varepsilon>0$. There exist a function $u \in BV(a,b)$ with $\norm{u}_{\LU} \leq K$ and a measure $\gamma \in \M$ such that, up to subsequences,  $\{u_{\varepsilon}\}_{\varepsilon}$ converges to $u$ in $L^1(a,b)$ and $\{\gamma_{\varepsilon}\}_{\varepsilon}$ converges to $\gamma=\gamma^s+ g \mathcal{L}^1$ in $\WMQ$ for all $1 < q < \infty$ and in particular in the flat norm. Moreover, there holds $\gamma^s = D^su$ and $u^{\prime} - g \in L^2(a,b)$.
\end{theorem}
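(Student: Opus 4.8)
The plan is to turn the energy bound, via Lemma~\ref{app}, into a decomposition of $(a,b)$ into a \emph{good set} on which $g_\varepsilon$ carries uniformly bounded $L^1$-mass and a \emph{bad set} of vanishing Lebesgue measure on which $g_\varepsilon$ may blow up, to replace $u_\varepsilon$ by a surrogate of uniformly bounded variation agreeing with $u_\varepsilon$ on the good set, and then to pass to the limit in $\gamma_\varepsilon = g_\varepsilon \mathcal{L}^1$ piece by piece. \emph{Step 1 (splitting).} Since the bulk term of $E_\varepsilon$ is non-negative, $E_\varepsilon(\ue,\gae)\le C_0$ yields $\tfrac1\varepsilon\int_a^b f\big(\varepsilon\dashint_{I_\varepsilon(x)\cap(a,b)}|g_\varepsilon|\big)\dd x\le C_0$, so Lemma~\ref{app} applied with $g=g_\varepsilon$ produces a grid $\Ge$ and a subgrid $\Gee$ with $N_\varepsilon:=\#(\Ge\setminus\Gee)\le C_0/(2c_0)$ and $\int_{G_\varepsilon}|g_\varepsilon|\le C_0/c_0$, where $G_\varepsilon:=\bigcup\{I_\varepsilon(x_\alpha)\colon x_\alpha\in\Gee\}$. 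As the intervals $\{I_\varepsilon(x_\alpha)\colon x_\alpha\in\Ge\}$ cover $(a,b)$ up to a boundary strip of width $O(\varepsilon)$, the complement $B_\varepsilon:=(a,b)\setminus G_\varepsilon$ is, up to a null set, a union of at most $N_\varepsilon+O(1)$ intervals with $|B_\varepsilon|\le 2\varepsilon N_\varepsilon+O(\varepsilon)\to0$, while $\int_{G_\varepsilon}|u_\varepsilon'|\le(b-a)^{1/2}\|u_\varepsilon'-g_\varepsilon\|_{L^2(a,b)}+\|g_\varepsilon\|_{L^1(G_\varepsilon)}\le C_1$ uniformly in $\varepsilon$.

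\emph{Step 2 (surrogate for $u_\varepsilon$).} I would define $\bar u_\varepsilon\in W^{1,1}(a,b)$ by setting $\bar u_\varepsilon:=u_\varepsilon$ on $G_\varepsilon$ and, on each component $(c,d)$ of $B_\varepsilon$ with $a<c$ and $d<b$, letting $\bar u_\varepsilon$ be the affine interpolant between $u_\varepsilon(c)$ and $u_\varepsilon(d)$ (extending by the available endpoint value as a constant on a component abutting $\{a,b\}$). Then $\|\bar u_\varepsilon\|_{L^\infty(a,b)}\le K$ and $\|\bar u_\varepsilon'\|_{L^1(a,b)}\le \|u_\varepsilon'\|_{L^1(G_\varepsilon)}+2K\bigl(N_\varepsilon+O(1)\bigr)\le C$, since each bad component contributes $|u_\varepsilon(d)-u_\varepsilon(c)|\le 2K$ to the variation; hence $\{\bar u_\varepsilon\}$ is bounded in $BV(a,b)$. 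By weak-$\ast$ compactness of bounded sequences in $BV$ we pass to a subsequence with $\bar u_\varepsilon\to u$ in $L^1(a,b)$ and $D\bar u_\varepsilon\weakstar Du$ in $\mathcal{M}(a,b)$, with $u\in BV(a,b)$ and $\|u\|_{L^\infty(a,b)}\le K$; since $\|u_\varepsilon-\bar u_\varepsilon\|_{L^1(a,b)}\le 2K|B_\varepsilon|\to0$, also $u_\varepsilon\to u$ in $L^1(a,b)$.

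\emph{Step 3 (the measures $\gamma_\varepsilon$).} I would write $\gamma_\varepsilon=g_\varepsilon\mathcal L^1=Du_\varepsilon-(u_\varepsilon'-g_\varepsilon)\mathcal L^1=D\bar u_\varepsilon+D(u_\varepsilon-\bar u_\varepsilon)-(u_\varepsilon'-g_\varepsilon)\mathcal L^1$ and treat the three terms separately. Refining the subsequence, the $L^2$-bound $\|u_\varepsilon'-g_\varepsilon\|_{L^2(a,b)}^2\le C_0$ gives a weak $L^2$-limit $h$, so $(u_\varepsilon'-g_\varepsilon)\mathcal L^1\weakstar h\mathcal L^1$ with uniformly bounded total variation; interpolating the $L^1$- and $L^\infty$-bounds on $u_\varepsilon-\bar u_\varepsilon$ yields $u_\varepsilon-\bar u_\varepsilon\to0$ in $L^q(a,b)$ for every $q<\infty$, hence $D(u_\varepsilon-\bar u_\varepsilon)\to0$ in $W^{-1,q}(a,b)$ by~\eqref{eq:neg_Sobolev_trivial_estimate}; and $D\bar u_\varepsilon\weakstar Du$ with uniformly bounded total variation. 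The key analytic point is that weak-$\ast$ convergence of measures together with a uniform total variation bound \emph{upgrades} to strong convergence in $W^{-1,q}(a,b)$ for every $1<q<\infty$: this follows from the compact embedding $W^{1,q'}_0(a,b)\Subset C_0(a,b)$ (Arzelà--Ascoli), by covering the unit ball of $W^{1,q'}_0(a,b)$ by finitely many $C_0$-balls of radius $\delta$ and using the uniform mass bound to make all duality pairings small uniformly. Applying this to the first and third terms and adding up, $\gamma_\varepsilon\to\gamma:=Du-h\mathcal L^1$ in $W^{-1,q}(a,b)$ for all $1<q<\infty$, hence in the flat norm by Lemma~\ref{Lemma_weak_negativ_flat}. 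Decomposing $Du=u'\mathcal L^1+D^su$ gives $\gamma=D^su+(u'-h)\mathcal L^1$, so $\gamma^s=D^su$ and, with $g:=u'-h\in L^1(a,b)$, we obtain $u'-g=h\in L^2(a,b)$, as required.

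I expect the main obstacle to be the construction and exploitation of the surrogate $\bar u_\varepsilon$: because $g_\varepsilon$, and therefore $u_\varepsilon'$, may be arbitrarily large on $B_\varepsilon$, the displacements $u_\varepsilon$ themselves carry no uniform $BV$-bound, so one must fabricate a competitor that coincides with $u_\varepsilon$ off a null set, preserves the $L^\infty$-bound, and has controlled variation on the bad intervals, and then verify that the limit passage in $\gamma_\varepsilon=D\bar u_\varepsilon+D(u_\varepsilon-\bar u_\varepsilon)-(u_\varepsilon'-g_\varepsilon)\mathcal L^1$ simultaneously produces the structural identity $\gamma^s=D^su$ and the regularity $u'-g\in L^2$, with all subsequence extractions performed compatibly.
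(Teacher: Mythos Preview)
Your proof is correct and follows essentially the same strategy as the paper: use Lemma~\ref{app} to split $(a,b)$ into a good and a bad set, build a surrogate with uniform $BV$-bound to extract the $L^1$-limit $u$, and then decompose $\gamma_\varepsilon$ to identify its limit. Two minor differences are worth noting. First, the paper's surrogate is simply $v_\varepsilon:=u_\varepsilon\1_{G_\varepsilon}$, an $SBV$-function with at most $2\#(\Ge\setminus\Gee)+2$ jumps, rather than your $W^{1,1}$ affine interpolant; both yield the same $BV$-bound. Second, once $u_\varepsilon\to u$ in $L^q$ is established, the paper bypasses the surrogate entirely in Step~3 by writing $\gamma_\varepsilon=Du_\varepsilon-(u_\varepsilon'-g_\varepsilon)\mathcal{L}^1$ and using~\eqref{eq:neg_Sobolev_trivial_estimate} directly, namely $\|Du_\varepsilon-Du\|_{W^{-1,q}}\le\|u_\varepsilon-u\|_{L^q}\to0$, together with the compact embedding $L^2(a,b)\Subset W^{-1,q}(a,b)$ for the second term. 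Your Schauder-type argument upgrading weak-$\ast$ convergence of bounded measures to $W^{-1,q}$-convergence via $W^{1,q'}_0\Subset C_0$ is correct, but it is a longer route to the same conclusion.
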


\begin{proof}
We first observe from the finiteness of $E_{\varepsilon}(u_\varepsilon,\gamma_\varepsilon)$ that we necessarily have $\ue \in \W$ with $\norm{\ue}_{L^{\infty}(a,b)}\leq K$ and $\gae = g_{\varepsilon}\mathcal{L}^1$ for some functions $\ggae \in L^1(a,b)$ for all~$\varepsilon>0$.  By the uniform boundedness of $\norm{\ue^{\prime}- g_{\varepsilon}}_{L^2(a,b)}$ and $\norm{\ue}_{L^{\infty}(a,b)}$ we deduce from~\eqref{eq:neg_Sobolev_trivial_estimate}
\begin{align*}
\norm{\gae}_{W^{-1, \infty}(a,b)} &\leq \norm{\ue^{\prime} \mathcal{L}^1}_{W^{-1, \infty}(a,b)} + \norm{\gae - \ue^{\prime}\mathcal{L}^1}_{W^{-1, \infty}(a,b)}\notag\\ 
&\leq \norm{\ue}_{L^{\infty}(a,b)} +  C^{\prime} \norm{g_{\varepsilon} - \ue^{\prime}}_{L^2(a,b)} \leq C(a,b,K), 
\end{align*}
independently of~$\varepsilon$. Therefore, $\{\gae\}_{\varepsilon}$ is bounded in $W^{-1, \infty}(a,b)$ and consequently contains a subsequence, which converges weakly-$\ast$ in $W^{-1, \infty}(a,b)$ to some $\gamma \in W^{-1, \infty}(a,b)$. We next study the convergence of the sequence $\{u_{\varepsilon}\}_{\varepsilon}$. To this end, we consider the function~$v_{\varepsilon}$ defined by 
\begin{equation*}
v_{\varepsilon}(x) \coloneqq  \begin{cases}
\ue(x) & \quad  x \in \bigcup \{I_\varepsilon(x_\alpha) \colon x_\alpha \in \Gee\},  \\ 
0 & \quad \text{otherwise}.
\end{cases} 
\end{equation*}
Since $\ue \in W^{1,1}(a,b)$ with $\norm {u_{\varepsilon}} _{L^{\infty}(a,b)} \leq K$ is assumed, we clearly have ${v_{\varepsilon} \in SBV(a,b)}$ with $\norm {v_{\varepsilon}} _{L^{\infty}(a,b)} \leq K$, for every~$\varepsilon$. Moreover, jump discontinuities of $v_\varepsilon$ can only occur at points $x_\alpha \pm \varepsilon$ for $x_\alpha \in \Ge \setminus \Gee$ and close to the boundary at $\min \Gee$ or at $\max  \Gee$. As a consequence of Lemma~\ref{app} and the definition of the energy~$E_\varepsilon$, we deduce 
\begin{equation*}
\# J_{v_{\varepsilon}} \leq  2\# (\Ge \setminus \Gee) + 2 
 \leq \frac{1}{c_0 \varepsilon} \int_a^b f \bigg( \varepsilon \dashint_{I_\varepsilon(x)\cap (a,b)}{ |g|\diff t } \bigg) \dd x + 2 \leq \frac{C_0}{c_0} +2,
\end{equation*}
i.e., that $\# J_{v_{\varepsilon}}$ is bounded independently of $\varepsilon$. By the Cauchy--Schwarz inequality and again by Lemma~\ref{app}, we additionally have
\begin{align*}
\int_a^b{ |v^{\prime}_{\varepsilon}(x)|\diff x } 
&\leq \int_{ \bigcup \{I_\varepsilon(x_\alpha) \colon x_\alpha \in \Gee\}} |u^{\prime}_{\varepsilon}(x)- g_{\varepsilon}(x)| \diff x + \int_{ \bigcup \{I_\varepsilon(x_\alpha) \colon x_\alpha \in \Gee\}} |g_{\varepsilon}(x)| \diff x\\
&\leq  |b-a|^{\frac{1}{2}} \left( \int_a^b |u^{\prime}_{\varepsilon}(x)- g_{\varepsilon}(x)|^2 \diff x \right)^{\frac{1}{2}}+  \frac{1}{c_0 \varepsilon} \int_a^b f  \bigg( \varepsilon \dashint_{I_\varepsilon(x)\cap (a,b)}{ | g_{\varepsilon}|\diff t } \bigg) \dd x
\\ 
& \leq |b-a|^{\frac{1}{2}} C_0^{\frac{1}{2}} +  \frac{C_0}{c_0},
\end{align*}
independently of~$\varepsilon$. Hence, $\{v_{\varepsilon}\}_{\varepsilon}$ is bounded in $BV(a,b)$. By the Rellich--Kondrachov theorem, there exists a subsequence that converges in $L^q(a,b)$ to some $u \in BV(a,b)$ with $\norm{u}_{\LU} \leq K$, for any $1 \leq q < \infty$. To identify~$u$ as the limit in $L^q(a,b)$ of (the same subsequence of) $\{u_{\varepsilon}\}_{\varepsilon}$ we notice from the definition of $v_\varepsilon$ 
\begin{equation*}
\norm{v_{\varepsilon}-\ue}^q_{L^q(a,b)}  \leq \int_{(a,b) \setminus  \bigcup \{I_\varepsilon(x_\alpha) \colon x_\alpha \in \Gee\}} K^q \dd x \leq 2 \varepsilon (\# (\Ge \setminus \Gee)+2) K^q.
\end{equation*}
Since $\# (\Ge \setminus \Gee)$ is bounded uniformly in~$\varepsilon$, this allows to conclude the convergence of $\{\ue\}_{\varepsilon}$ to~$u$ in $L^q(a,b)$, for any $1 \leq q < \infty$.

It remains to show convergence of (the subsequence of) $\{\gae\}_{\varepsilon}$ in $W^{-1,q}(a,b)$ for any $1 < q < \infty$ and the claimed relations between~$\gamma$ and~$Du$. In view of~\eqref{eq:neg_Sobolev_trivial_estimate}, we have $u^{\prime}_{\varepsilon} \mathcal{L}^1 \to Du=u^{\prime} \mathcal{L}^1 + D^su$ in $W^{-1,q}(a,b)$ for every $1 < q < \infty$.
Furthermore, by the boundedness of $\{u^{\prime}_{\varepsilon}- g_{\varepsilon}\}_{\varepsilon}$ in $L^2(a,b)$, we can extract a subsequence which converges weakly in $L^2(a,b)$ to some $w \in L^2(a,b)$. Since $L^2(a,b)$ is compactly embedded in $W^{-1,q}(a,b)$ for all ${1 < q < \infty}$, we obtain $\gae \to Du-w\mathcal{L}^1$ in $W^{-1,q}(a,b)$ and thus, via Lemma~\ref{Lemma_weak_negativ_flat}, also in the flat norm. This shows $ \gamma=Du-w\mathcal{L}^1 \in \M$, which in turn, by the Radon--Nikodým theorem, yields $\gamma^s= D^su$ and $w= u^{\prime} - g \in L^2(a,b)$.
\end{proof}

\begin{theorem}[Compactness for $K = \infty$]\label{com-K-infty}
Assume $K=\infty$. Suppose $\{(u_{\varepsilon},\gamma_\varepsilon)\}_{\varepsilon}$ is a sequence in $L^0((a, b); \overline{\R}) \times \M$ with
\begin{equation*}
 E_{\varepsilon}(\ue,\gae)\leq C_0 
\end{equation*}
for a positive constant~$C_0$ and all $\varepsilon>0$. There is a partition $a = x_0 < x_1 < \ldots < x_m = b$, a function $u \in BV_{\infty,\mathcal{P}}(a,b)$, say 
\[ u = w + \sum_{i=1}^m \alpha_i \chi_{(x_{i-1}, x_i)} \] 
with $\alpha_i \in \{-\infty, 0, +\infty\}$, $i = 1, \ldots, m$ and $w \in BV((a, b); \mathbb{R})$, and a measure $\gamma \in \M$ such that, up to subsequences, $\{u_{\varepsilon}\}_{\varepsilon}$ converges to $u$ a.e.\ and in $L^1_{\rm loc}(\mathcal{F}(u) \setminus \{x_1, \ldots, x_m\})$ and $\{g_{\varepsilon}\mathcal{L}^1\}_{\varepsilon}$ converges to $\gamma=D^s w+ g \mathcal{L}^1$ in $W^{-1,q}_{\rm loc}((a,b) \setminus \{x_1, \ldots, x_m\})$ for all $1 < q < \infty$ and in particular in the flat norm on each open $I$ that is compactly contained in $(a,b) \setminus \{x_1, \ldots, x_m\}$. Moreover, $w^{\prime} - g \in L^2(a,b)$. In particular, $(u_\varepsilon, \gamma_\varepsilon) \to (u,\gamma)$ in $L^0((a, b); \overline{\R}) \times \M$. 
\end{theorem}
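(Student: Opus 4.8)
The plan is to run the argument of Theorem~\ref{com}, the essential new point being that an energy bound no longer yields a global $L^\infty$- or $BV$-bound on $u_\varepsilon$. Instead I would work on the ``good set'' $G_\varepsilon$ provided by Lemma~\ref{app}, on which $u_\varepsilon$ is controlled up to an additive constant on each connected component, and keep track separately of those constants, which may diverge to $\pm\infty$. Concretely, as in Theorem~\ref{com} the finiteness of $E_\varepsilon(u_\varepsilon,\gamma_\varepsilon)$ forces $u_\varepsilon \in W^{1,1}(a,b)$, $\gamma_\varepsilon = g_\varepsilon\mathcal{L}^1$ with $g_\varepsilon \in L^1(a,b)$, and $\|u_\varepsilon' - g_\varepsilon\|_{L^2(a,b)}^2 \le C_0$; Lemma~\ref{app} then produces grids $\mathcal{G}_\varepsilon \supseteq \mathcal{G}_\varepsilon'$ with $\int_{G_\varepsilon} |g_\varepsilon|\dd t + 2\#(\mathcal{G}_\varepsilon\setminus\mathcal{G}_\varepsilon') \le C_0/c_0$, where $G_\varepsilon \coloneqq \bigcup\{I_\varepsilon(x_\alpha) \colon x_\alpha \in \mathcal{G}_\varepsilon'\}$. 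The main obstacle is exactly the bookkeeping that follows: one must exploit the bound on $\#(\mathcal{G}_\varepsilon\setminus\mathcal{G}_\varepsilon')$ to control the number of components of $G_\varepsilon$, extract from them a limiting finite partition, and assemble the limit of $u_\varepsilon$ in the form $w + \sum_{i=1}^m \alpha_i \chi_{(x_{i-1},x_i)}$ with $w \in BV(a,b)$ and $\alpha_i \in \{-\infty,0,+\infty\}$.

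For the partition, since $\mathcal{G}_\varepsilon$ is a full $2\varepsilon$-spaced grid on $(a+\varepsilon,b-\varepsilon)$ and consecutive intervals $I_\varepsilon(x_\alpha)$, $I_\varepsilon(x_{\alpha+1})$ overlap, $G_\varepsilon$ is a union of at most $\#(\mathcal{G}_\varepsilon\setminus\mathcal{G}_\varepsilon') + 1 \le C_0/(2c_0) + 1$ intervals, and $|(a,b)\setminus G_\varepsilon| = O(\varepsilon) \to 0$. Passing to a subsequence, I may assume that the number of these intervals is constant and that their endpoints converge in $[a,b]$; the limit points lying in $(a,b)$ yield a partition $a = x_0 < x_1 < \ldots < x_m = b$ with $m$ bounded, and by construction every compact $[c,d] \Subset (x_{i-1},x_i)$ eventually lies in a single component of $G_\varepsilon$.

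Next, on $G_\varepsilon$ one has, by the Cauchy--Schwarz inequality and Lemma~\ref{app}, $\int_{G_\varepsilon}|u_\varepsilon'|\dd t \le C_1$ with $C_1 \coloneqq |b-a|^{1/2}C_0^{1/2} + C_0/c_0$, so $u_\varepsilon$ has oscillation at most $C_1$ on each component of $G_\varepsilon$. Fixing intervals $[s_i,t_i]\Subset(x_{i-1},x_i)$ and setting $d_\varepsilon^i \coloneqq \dashint_{[s_i,t_i]} u_\varepsilon\dd t$, the functions $u_\varepsilon - d_\varepsilon^i$ are, on each compact subset of $(x_{i-1},x_i)$ and for small $\varepsilon$, uniformly bounded and of uniformly bounded variation; by $BV$-compactness and a diagonal argument, along a further subsequence $u_\varepsilon - d_\varepsilon^i \to w^i$ in $L^1_{\mathrm{loc}}(x_{i-1},x_i)$ with $w^i \in BV(x_{i-1},x_i) \cap L^\infty(x_{i-1},x_i)$ and $|Dw^i|(x_{i-1},x_i) \le C_1$. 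Extracting once more, each $d_\varepsilon^i$ converges in $\overline{\R}$; I set $\alpha_i \in \{-\infty,0,+\infty\}$ accordingly and define $w$ on $(a,b)$ by $w|_{(x_{i-1},x_i)} \coloneqq w^i + \lim_\varepsilon d_\varepsilon^i$ if $\alpha_i = 0$ and $w|_{(x_{i-1},x_i)} \coloneqq w^i$ otherwise. Since one-sided limits of $BV$ functions at interval endpoints exist, the finitely many pieces patch into a function $w \in BV(a,b)$, with the jumps of $w$ at the $x_i$ finite though not necessarily uniformly bounded, and $u \coloneqq w + \sum_{i=1}^m \alpha_i \chi_{(x_{i-1},x_i)} \in BV_{\infty,\mathcal{P}}(a,b)$. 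Writing $u_\varepsilon = (u_\varepsilon - d_\varepsilon^i) + d_\varepsilon^i$ on each $(x_{i-1},x_i)$ then gives $u_\varepsilon \to u$ a.e.\ on $(a,b)$ and in $L^1_{\mathrm{loc}}$ on the components of $\mathcal{F}(u)\setminus\{x_1,\ldots,x_{m-1}\}$, while on the pieces with $\alpha_i = \pm\infty$ it gives $u_\varepsilon \to \pm\infty = u$ a.e.

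It remains to pass to the limit in $\gamma_\varepsilon$. By boundedness in $L^2(a,b)$, I may assume $u_\varepsilon' - g_\varepsilon \weakly w_0$ in $L^2(a,b)$ for some $w_0 \in L^2(a,b)$. Given an open $I \Subset (a,b)\setminus\{x_1,\ldots,x_{m-1}\}$, eventually contained in a single component of $G_\varepsilon$ inside some $(x_{i-1},x_i)$, the uniform bound on $u_\varepsilon - d_\varepsilon^i$ upgrades its $L^1_{\mathrm{loc}}$-convergence to convergence in $L^q(I)$ for every $q < \infty$, so $u_\varepsilon'\mathcal{L}^1 = D(u_\varepsilon - d_\varepsilon^i) \to Dw^i$ in $W^{-1,q}(I)$ by~\eqref{eq:neg_Sobolev_trivial_estimate}, while $(u_\varepsilon'-g_\varepsilon)\mathcal{L}^1 \to w_0\mathcal{L}^1$ in $W^{-1,q}(I)$ because $L^2(I) \Subset W^{-1,q}(I)$. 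Using that $Dw^i\mrs I = (w'\mathcal{L}^1 + D^s w)\mrs I$ (the additive constant is killed by differentiation and $I$ avoids the partition points), this yields
\[
g_\varepsilon\mathcal{L}^1 = u_\varepsilon'\mathcal{L}^1 - (u_\varepsilon'-g_\varepsilon)\mathcal{L}^1 \;\to\; \big((w'-w_0)\mathcal{L}^1 + D^s w\big)\mrs I \quad\text{in } W^{-1,q}(I)
\]
for every $1 < q < \infty$. Setting $g \coloneqq w'-w_0 \in L^1(a,b)$ and $\gamma \coloneqq D^s w + g\mathcal{L}^1 \in \M$, I conclude that $g_\varepsilon\mathcal{L}^1 \to \gamma$ in $W^{-1,q}_{\mathrm{loc}}((a,b)\setminus\{x_1,\ldots,x_{m-1}\})$ and hence, by Lemma~\ref{Lemma_weak_negativ_flat}, in the flat norm locally there, with $w' - g = w_0 \in L^2(a,b)$. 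Together with the previous step this establishes all assertions, including $(u_\varepsilon,\gamma_\varepsilon) \to (u,\gamma)$ in $L^0((a,b);\overline{\R})\times\M$ in the sense defined before Theorem~\ref{mainresultKinfty}.
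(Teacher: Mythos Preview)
Your proof is correct and follows essentially the same strategy as the paper: use Lemma~\ref{app} to obtain a good set with a bounded number of interval components, extract a limiting partition from their endpoints, normalize $u_\varepsilon$ by componentwise averages to gain local $BV$-compactness and define $w$ and the $\alpha_i$, and then recover the convergence of $\gamma_\varepsilon$ from the $L^2$-bound on $u_\varepsilon'-g_\varepsilon$. The only cosmetic differences are that the paper works with the closures $\overline{I_\varepsilon(x_\alpha)}$ (note that your open intervals $I_\varepsilon(x_\alpha)$, $I_\varepsilon(x_{\alpha+1})$ are merely adjacent, not overlapping), takes the normalizing constants as averages over the full components $J_i$ rather than over fixed subintervals $[s_i,t_i]$, and shortcuts the final step by invoking Theorem~\ref{com} on compact subintervals instead of reproving the $W^{-1,q}$-convergence inline.
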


\begin{proof}
We define $\mathcal{G}'_{\varepsilon}$ exactly as in Lemma~\ref{app} and denote by $J_1, \ldots, J_{m_{\varepsilon}}$ the connected components of $\bigcup \{\overline{I_{\varepsilon}(x_{\alpha})} : x_{\alpha} \in \mathcal{G}'_{\varepsilon}\}$. We then notice that the arguments in the preceding proof show that for some constant~$C$ we have $m_{\varepsilon} \le C$ and 
\begin{align}\label{eq:J-int-bds} 
  \int_{J_1 \cup \ldots \cup J_{m_{\varepsilon}}} | u_\varepsilon' (x)| \, \mathrm dx \le C, 
\end{align}
while $\mathcal{L}^1((a, b) \setminus (J_1 \cup \ldots \cup J_{m_{\varepsilon}})) \le C \varepsilon$. 

We set $x_{\varepsilon,i} = \sup J_i$ and $\alpha_{\varepsilon,i} = \dashint_{J_i} u_{\varepsilon} \, \mathrm dx$, $i = 1, \ldots, m_{\varepsilon}$. Note that \eqref{eq:J-int-bds} implies 
\begin{align}\label{eq:pw-infty-bd} 
  \| u_\varepsilon - \alpha_{\varepsilon,i} \|_{L^\infty(J_i)} \le C. 
\end{align} 
Passing to a subsequence we may assume that $m_{\varepsilon} = \tilde{m}$ for some $\tilde{m}$ independent of $\varepsilon$ and $x_{\varepsilon,i} \to \tilde{x}_i \in [a,b]$ as well as $\alpha_{\varepsilon,i} \to \tilde{\alpha}_{i} \in \overline{\mathbb{R}} = \mathbb{R} \cup \{-\infty,+\infty\}$, for $i = 1, \ldots, \tilde{m}$. Choose $a = x_0 < x_1 < \ldots < x_{m} = b$ ($m \le \tilde{m}$) such that $\{x_0, \ldots, x_m\} = \{\tilde{x}_0, \tilde{x}_1, \ldots, \tilde{x}_{\tilde{m}}\}$ (with $\tilde{x}_0 := a$ and, by construction, $\tilde{x}_{\tilde{m}} = b$) and set $\alpha_i = \tilde{\alpha}_j$ if $\tilde{x}_{j-1} < \tilde{x}_j = x_i$. 

If $|\alpha_i| <  \infty$, then $\{u_\varepsilon\}_\varepsilon$ converges to a function $w$ weakly* in $BV_{\rm loc}(x_{i-1}, x_i)$ and the uniform bounds in \eqref{eq:J-int-bds} and \eqref{eq:pw-infty-bd} imply $\| w \|_{L^{\infty}(x_{i-1}, x_i)} + Dw((x_{i-1}, x_i)) \le C$, in particular, $w \in BV(x_{i-1}, x_i)$. If $\alpha_i = \pm \infty$, then $u_\varepsilon \to \pm \infty$ a.e.\ on $(x_{i-1}, x_i)$ by \eqref{eq:pw-infty-bd}. In this case we define $w \in BV(x_{i-1}, x_i)$ as the weak* limit in $BV_{\rm loc}(x_{i-1}, x_i)$ of $\{u_\varepsilon - \alpha_{i,\varepsilon}\}_\varepsilon$. The convergence of $\{\gamma_\varepsilon\}_\varepsilon$ follows directly by applying Theorem~\ref{com} on compact intervals $I$ of $(x_{i-1}, x_i)$ to $\{(u_\varepsilon, \gamma_\varepsilon)\}_\varepsilon$ and $\{(u_\varepsilon - \alpha_{i,\varepsilon}, \gamma_\varepsilon)\}_\varepsilon$, respectively. As $\| w' - g \|_{L^2(I)}$ is bounded independently of $I$ by the uniform energy bound, we have indeed $w' - g \in L^2(a,b)$. 
\end{proof} 


\begin{remark}\label{rmk:en-lb}
For later use we notice that Lemma~\ref{app} and the proof of Theorem~\ref{com-K-infty} show that, under the assumptions of Theorem~\ref{com-K-infty}, for any open $A \Subset (a, b) \setminus \{x_1, \ldots, x_{m-1}\}$ 
\begin{align*} 
  &\int_a^b { |u_\varepsilon' - g_\varepsilon |^2 \dd x } 
  + \frac{1}{\varepsilon} \int_a^b {f \bigg( \varepsilon \dashint_{I_{\varepsilon}(x) \cap (a,b)}{ |g_\varepsilon| } \dd t} \bigg) \dd x \\ 
  &\quad \ge 2 c_0 (m-1) + \int_A { |u_\varepsilon' - g_\varepsilon |^2 \dd x } 
   + \frac{1}{\varepsilon} \int_A {f \bigg( \varepsilon \dashint_{I_{\varepsilon}(x) \cap (a,b)}{ |g_\varepsilon| } \dd t} \bigg) \dd x 
\end{align*} 
for sufficiently small $\varepsilon$ since $\# (\Ge \setminus \Gee) \ge \tilde{m} - 1 \ge m - 1$.  
\end{remark}

\begin{remark}\label{L2assumptions}
According to the compactness results of Theorems~\ref{com} and~\ref{com-K-infty} we may restrict ourselves to pairs $(u, \gamma) \in BV(a,b) \times \M$ with $\norm{u}_{L^{\infty}(a,b)} \leq K$ and $(u, \gamma) \in BV_{\infty,\mathcal{P}}(a,b) \times \M$, respectively, and such that the measure $Du - \gamma$, is absolutely continuous with respect to the Lebesgue measure with density in $L^2(\mathcal{F}(u))$. The statements of Theorems~\ref{mainresult} and~\ref{mainresultKinfty} are trivial otherwise.
\end{remark}

\begin{remark}
In principle, with the compactness result of Theorem~\ref{com} at hand, we could infer our $\Gamma$-convergence result in Theorem~\ref{mainresult} from known results, by separate considerations of the elastic and inelastic contributions. To this end, given $(u_\varepsilon,\gamma_\varepsilon) \in W^{1,1}(a,b) \times \M$ with $\gamma_\varepsilon = g_\varepsilon \mathcal{L}^1$ and $u_\varepsilon' - g_\varepsilon \in L^2(a,b)$, we define $(W_\varepsilon,G_\varepsilon) \in W^{1,2}(a,b) \times W^{1,1}(a,b)$ via $W_\varepsilon(a) = G_\varepsilon(a) = 0$ and $W_\varepsilon'=u_\varepsilon-g_\varepsilon$,  $G_\varepsilon'=g_\varepsilon$ on $(a,b)$. This allows to express
\begin{equation*}
 E_\varepsilon(u_\varepsilon,\gamma_\varepsilon) = \int_a^b |W_\varepsilon'|^2  \dd x + \frac{1}{\varepsilon} \int_a^b f \bigg( \varepsilon \dashint_{I_\varepsilon(x)\cap (a,b)}{ |G_\varepsilon'|\diff t } \bigg) \dd x
\end{equation*}
as the sum of the standard Dirichlet energy for~$W_\varepsilon$ and a non-local energy involving only~$G_\varepsilon$ considered by Lussardi and Vitali in~\cite{LuVi97}. However, the understanding of the coupling between~$u_\varepsilon$ and~$\gamma_\varepsilon$ is essential for the extension to higher dimensions addressed in~\cite{AuerBeckSchmidt:21b}, which cannot be traced back to the one-dimensional case via the slicing technique. For this reason, we prefer to give a self-contained proof of Theorem~\ref{mainresult}.
\end{remark}

\section[{Estimate from below of the \texorpdfstring{$\Gamma$}{Gamma}-lower limit}]{Estimate from below of the \boldmath \texorpdfstring{$\Gamma$}{Gamma}-lower limit}
\label{sec: estimate from below  1d}

Next, we start with the proof of the $\Gamma$-$\liminf$ inequality. Except for the very last paragraph we assume $K < \infty$ in the whole section. To show this inequality it is useful to introduce the localized version of the functionals~$\{E_{\varepsilon}\}_{\varepsilon}$ and~$E$. They are defined for $(u,\gamma) \in L^1(a,b) \times \M$ and every open subset $A$ of $(a,b)$ via
\begin{align*}  
E_{\varepsilon}(u,\gamma,A)  \coloneqq  
\begin{cases}
\int_A { |u'- g |^2 \dd x }& \quad \text{if } u \in \W, \,  \norm{u}_{\LU} \leq K, \\ + \frac{1}{\varepsilon} \int_A {f \big( \varepsilon \dashint_{I_{\varepsilon}(x) \cap (a,b)}{ |g| } \dd t} \big) \dd x  & \quad \phantom{if } \gamma= g\mathcal{L}^1, \, g \in L^1(a,b),\\
 & \quad \phantom{if } \text{and }   u'-g \in  L^2(a,b),\\[0.2cm]
\infty & \quad \text{otherwise},
\end{cases}
\end{align*}
and 
\begin{equation*}
E(u,\gamma,A) \coloneqq 
\begin{cases}
\int_A {|u^{\prime}- g|^2 \dd x }+ \int_A { c_0 |g| \dd x } & \quad \text{if } u \in BV(a,b), \, \norm{u}_{\LU} \leq K, \\ 
+ 2 \, \sum_{x \in J_u \cap A} \ff{\frac{1}{2}|[u](x)|}& \quad \phantom{if }\gamma = D^su + g \mathcal{L}^1, \, g \in L^1(a,b),\\ 
+ c_0 |D^cu|(A) & \quad \phantom{if } \text{and } u^{\prime}- g \in L^2(a,b),\\[0.2cm] 
\infty & \quad \text{otherwise}. 
\end{cases} 
\end{equation*}
We denote the $\Gamma$-lower and $\Gamma$-upper limit of $\{E_{\varepsilon}\}_{\varepsilon}$ by
\begin{align*}
E^{\prime}(u, \gamma) & \coloneqq  \inf \big\{ \liminf_{\varepsilon \to 0} E_{\varepsilon}(\ue,\gae) \colon \ue \to u \text{ in } L^1(a,b), \, \gae \to \gamma \text{ in the flat norm}\big\}, \\
 E^{\prime \prime}(u,\gamma) & \coloneqq  \inf \big\{ \limsup_{\varepsilon \to 0} E_{\varepsilon}(\ue,\gae) \colon \ue \to u \text{ in } L^1(a,b), \, \gae \to \gamma \text{ in the flat norm}\big\},
\end{align*}
respectively. For the $\Gamma$-lower limit we also need a localized version, for which we adapt the notation and write $E^\prime(\cdot,\cdot,A)$ for every open subset~$A$ of $(a,b)$.

\begin{remark}[Properties of the localized $\Gamma$-lower limit]
\label{remark_properties_lower_limit}
Let $(u,\gamma) \in \LE \times \M$. 
\begin{enumerate}[font=\normalfont, label=(\roman{*}), ref=(\roman{*})]
 \item The properties that the set functions ${A \mapsto E_{\varepsilon}(u,\gamma,A)}$ are increasing and superadditive (on disjoint sets) for each $\varepsilon$ carry immediately over to ${A \mapsto E^{\prime}(u,\gamma,A)}$.
 \item A direct consequence of~(i) is that lower bounds for~$A \mapsto E'(u,\gamma,A)$ transfer from intervals in $(a,b)$ to arbitrary open subsets of~$(a,b)$, i.e., if for a positive Borel measure~$\lambda$ an estimate of the form
 \begin{equation*}
  E^{\prime}(u,\gamma, A)\geq \lambda(A) 
 \end{equation*}
 holds for all intervals $A \subset (a,b)$, then the estimate actually holds for any open subset $A \subset (a,b)$,  cp.~\cite[Remark~4.6]{LuVi97} for a similar statement. 
\end{enumerate}
\end{remark}


In this section we prove the $\Gamma$-$\liminf$ inequality, where in view of Remark~\ref{L2assumptions} it is sufficient to consider $(u, \gamma) \in  BV(a,b) \times \M$ with $\norm{u}_{L^{\infty}(a,b)} \leq K$,  $\gamma = D^su+ g \mathcal{L}^1$ and $u^{\prime} - g \in L^2(a,b)$. The basic idea is to derive three separate estimates for the jump part, the volume term and the Cantor term, respectively, and to infer the desired estimate then from a combination of these estimates by means of measure theory.

\subsection{Estimate from below of the jump term} \label{subsec: estimate from below jump part 1d}

\begin{Prop}\label{JPEP}
Let $A$ be an open subset of $(a,b)$. For every $(u,\gamma) \in  BV(a,b) \times \mathcal{M}(a,b)$  with $\gamma = D^su+ g \mathcal{L}^1$ and $u^{\prime}- g \in L^2(a,b)$ we have
\begin{equation*}
E^{\prime}(u,\gamma,A) \geq 2 \,\sum_{x \in J_u \cap A}  f \bigg( \frac{1}{2} |[u](x)|\bigg). 
\end{equation*}
\end{Prop}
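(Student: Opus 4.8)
The plan is to localise near a single jump point and then to sum up. Since the set function $A \mapsto E'(u,\gamma,A)$ is increasing and superadditive on disjoint open sets (Remark~\ref{remark_properties_lower_limit}) and $J_u$ is at most countable, it suffices to prove that for every $x_0 \in J_u \cap A$ and every open interval $I=(x_0-\rho,x_0+\rho)$ with $\overline I \subseteq A$ one has $E'(u,\gamma,I) \ge 2 f\big(\tfrac12|[u](x_0)|\big)$. Indeed, given finitely many points $x_1,\dots,x_N \in J_u \cap A$ one picks pairwise disjoint such intervals $I_1,\dots,I_N \subseteq A$, so that superadditivity and monotonicity give $E'(u,\gamma,A) \ge \sum_{i=1}^N E'(u,\gamma,I_i) \ge \sum_{i=1}^N 2 f\big(\tfrac12|[u](x_i)|\big)$, and passing to the supremum over finite subsets of $J_u \cap A$ yields the claim. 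Note that no assumption on possible further jump points of $u$ inside $I_i$ is needed: the argument near $x_i$ only uses the existence of the one-sided approximate limits $u(x_i\pm)$.

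For the single-jump estimate, fix $x_0$, set $s=[u](x_0)\neq 0$, and let $(u_\varepsilon,\gamma_\varepsilon)$ be any sequence with $u_\varepsilon\to u$ in $L^1(a,b)$ and $\gamma_\varepsilon\to\gamma$ in the flat norm. We may assume $\ell:=\liminf_{\varepsilon\to0}E_\varepsilon(u_\varepsilon,\gamma_\varepsilon,I)<\infty$ (otherwise there is nothing to prove); along a subsequence realising $\ell$, finiteness of the energy forces $u_\varepsilon\in\W$, $\gamma_\varepsilon=g_\varepsilon\mathcal L^1$, $u_\varepsilon'-g_\varepsilon\in L^2(a,b)$ and $\int_I|u_\varepsilon'-g_\varepsilon|^2\dd x\le C$ for $\varepsilon$ small. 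Given $\delta>0$, choose $r\in(0,\rho)$ so small that $\dashint_{(x_0-r,x_0)}|u-u(x_0-)|\dd x$, $\dashint_{(x_0,x_0+r)}|u-u(x_0+)|\dd x$ and $(2r)^{1/2}C^{1/2}$ are all $<\delta$. Since $u_\varepsilon$ has an absolutely continuous representative it attains its mean over $(x_0-r,x_0)$, resp.\ over $(x_0,x_0+r)$, at points $\sigma_\varepsilon$, resp.\ $\tau_\varepsilon$; using $u_\varepsilon\to u$ in $L^1(I)$ one gets $|u_\varepsilon(\sigma_\varepsilon)-u(x_0-)|+|u_\varepsilon(\tau_\varepsilon)-u(x_0+)|<4\delta$ for $\varepsilon$ small. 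Writing $u_\varepsilon(\tau_\varepsilon)-u_\varepsilon(\sigma_\varepsilon)=\int_{\sigma_\varepsilon}^{\tau_\varepsilon}(u_\varepsilon'-g_\varepsilon)\dd x+\int_{\sigma_\varepsilon}^{\tau_\varepsilon}g_\varepsilon\dd x$, the Cauchy--Schwarz inequality and $\tau_\varepsilon-\sigma_\varepsilon\le 2r$ give
\[ \int_{\sigma_\varepsilon}^{\tau_\varepsilon}|g_\varepsilon|\dd x \ \ge\ \Big|\int_{\sigma_\varepsilon}^{\tau_\varepsilon}g_\varepsilon\dd x\Big| \ \ge\ |u_\varepsilon(\tau_\varepsilon)-u_\varepsilon(\sigma_\varepsilon)| - (2r)^{1/2}C^{1/2} \ \ge\ |s|-5\delta . \]

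The crucial step is the elementary one-dimensional inequality: for every $h\in L^1(\R)$ with $h\ge0$ and every $\varepsilon>0$,
\[ \frac1\varepsilon\int_\R f\Big(\tfrac12\int_{I_\varepsilon(x)}h\dd t\Big)\dd x \ \ge\ 2 f\Big(\tfrac12\int_\R h\dd t\Big). \]
This follows from the identity $f(t)=c_0\min\{t,1\}$: setting $M=\int_\R h\dd t$ and $\phi(x)=\tfrac12\int_{I_\varepsilon(x)}h\dd t\in[0,M/2]$, Fubini gives $\int_\R\phi\dd x=\varepsilon M$; if $M\le2$ then $f(\phi)=c_0\phi$ pointwise and the left-hand side equals $c_0 M=2f(M/2)$, whereas if $M>2$ the linear lower bound $\min\{\phi,1\}\ge\tfrac2M\phi$ (valid because $0\le\phi\le M/2$) yields left-hand side $\ge\tfrac{c_0}{\varepsilon}\cdot\tfrac2M\int_\R\phi\dd x=2c_0=2f(M/2)$. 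Applying this with $h=|g_\varepsilon|\mathbf 1_{(\sigma_\varepsilon,\tau_\varepsilon)}$, and noting that $f$ is non-decreasing and that for $\varepsilon$ small $I_\varepsilon(x)\subseteq(a,b)$ for $x\in I$ while $h$ is supported so that the non-local integrand vanishes outside $I$, we obtain
\[ E_\varepsilon(u_\varepsilon,\gamma_\varepsilon,I)\ \ge\ \frac1\varepsilon\int_I f\Big(\tfrac12\int_{I_\varepsilon(x)\cap(a,b)}|g_\varepsilon|\dd t\Big)\dd x\ \ge\ 2 f\Big(\tfrac12\int_{\sigma_\varepsilon}^{\tau_\varepsilon}|g_\varepsilon|\dd x\Big)\ \ge\ 2 f\big(\tfrac12(|s|-5\delta)\big). \]
Letting $\varepsilon\to0$ along the subsequence, then $\delta\to0$ using continuity of $f$, and finally taking the infimum over admissible sequences gives $E'(u,\gamma,I)\ge 2f(\tfrac12|s|)$, as required.

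I expect the main obstacle to be the tension between two length scales: the window size $\varepsilon\to0$ of the non-local term and the fixed mesoscopic scale $r$ at which the jump of $u$ can be read off from $u_\varepsilon$ — one cannot let $r\to0$ together with $\varepsilon$, since at scales below $\varepsilon$ the values of $u_\varepsilon$ carry essentially no information about $u$. The resolution is precisely that the displayed inequality holds with \emph{no} restriction on the support of $h$, so that the mass $\int_{\sigma_\varepsilon}^{\tau_\varepsilon}|g_\varepsilon|\dd x$ trapped on the fixed-length interval $(\sigma_\varepsilon,\tau_\varepsilon)$ is converted, without loss, into the full surface contribution $2f(\tfrac12|s|)$. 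The remaining bookkeeping — extracting subsequences realising the various $\liminf$'s and transferring interval estimates to arbitrary open subsets — is routine given Remark~\ref{remark_properties_lower_limit}.
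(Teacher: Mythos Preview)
Your proof is correct and follows the same overall architecture as the paper: localise near a single jump point, trap the jump between two points where $u_\varepsilon$ is close to the one-sided limits of $u$, estimate $\int|g_\varepsilon|$ on the trapped interval from below via Cauchy--Schwarz and the $L^2$-bound on $u_\varepsilon'-g_\varepsilon$, convert this mass into surface energy, and finally sum by superadditivity.

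The one genuine difference lies in the conversion step. The paper uses the grid estimate~\eqref{eq_f_integral_mv} (which rests on the mean-value-type Lemma~4.2 of \cite{BDM97}) to pass from the integral to a sum $2\sum_{x_\alpha} f\big(\varepsilon\dashint_{I_\varepsilon(x_\alpha)}|g_\varepsilon|\big)$ over disjoint $\varepsilon$-intervals, and then invokes the \emph{subadditivity} of $f$ to collapse this sum into a single term $2f\big(\tfrac12\int|g_\varepsilon|\big)$. Your route is more direct: you prove the integral inequality $\tfrac1\varepsilon\int_\R f\big(\tfrac12\int_{I_\varepsilon(x)}h\big)\dd x\ge 2f\big(\tfrac12\int_\R h\big)$ by exploiting the explicit form $f(t)=c_0\min\{t,1\}$ and the pointwise bound $\min\{\phi,1\}\ge \tfrac{2}{M}\phi$ for $0\le\phi\le M/2$. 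This avoids the grid lemma entirely and is slightly more elementary; on the other hand, the paper's subadditivity argument would carry over verbatim to any non-decreasing subadditive $f$, whereas your inequality is tied to this particular $f$. Your choice of $\sigma_\varepsilon,\tau_\varepsilon$ as mean-value points of the continuous representative of $u_\varepsilon$ is also a mild variant of the paper's selection via convergence in measure, but both serve the same purpose.
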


\begin{proof}
\emph{Step 1: For every ${\bar{x}} \in J_u \cap A$ we have}
\begin{equation}
\label{eqn_jump_point_lower_estimate}
E^{\prime}(u,\gamma,A) \geq 2 f \bigg( \frac{1}{2} |[u](\bar{x})|\bigg).
\end{equation}
Since only finite energy approximations are of interest, we consider a sequence $\{(\ue,\gae)\}_{\varepsilon}$ in $\W \times \M$ with $\gae = g_{\varepsilon} \mathcal{L}^1$ for some $g_{\varepsilon} \in L^1(a,b)$ and $E_{\varepsilon}(\ue,\gae) \leq C_0$ for some uniform constant~$C_0$, for all $\varepsilon>0$, such that $\ue \to u$ in $L^1(a,b)$ and $\gae \to \gamma$ in the flat norm. We fix an arbitrary $\delta > 0$ such that $(\bar{x}-2\delta, \bar{x}  +2\delta) \subset A$ (recall that~$A \subset (a,b)$ is open). By definition of $(u(\bar{x}+),u(\bar{x}-))$ and since $\ue \to u$ in measure, we readily find points $\bar{x}_{\varepsilon}^- \in (\bar{x} - \delta, \bar{x})$ and $\bar{x}_{\varepsilon}^+ \in (\bar{x} , \bar{x}+ \delta)$ such that, for sufficiently small $\varepsilon$,
\begin{equation}
\vert \ue(\bar{x}^+_{\varepsilon})- u(\bar{x}+)\vert < \delta  \quad \text{and} \quad  \vert \ue(\bar{x}^-_{\varepsilon})- u(\bar{x}-)\vert < \delta \label{goodappr}
\end{equation}
(also cp.~\cite[Lemma 5.1]{LuVi97}). Using the monotonicity of $A \mapsto E_\varepsilon (\ue,\gae,A)$ and applying the estimate~\eqref{eq_f_integral_mv} with $(a,b)$ replaced by $(\bar{x} - 2\delta, \bar{x} + 2\delta)$ on an associated grid of points~$\bar{\mathscr{G}}_{\varepsilon}$, we obtain from the subadditivity and non-negativity of~$f$ 
\begin{align}
 E_{\varepsilon}({\ue},{\gae},A)&\geq2\sum_{x_\alpha \in \bar{\mathscr{G}}_{\varepsilon}} f \bigg( \varepsilon  \dashint_{I_\varepsilon(x_{\alpha}) }{ |g_\varepsilon| \diff t } \bigg) \notag\\
 & \geq 2  f \bigg(\sum_{x_\alpha \in \bar{\mathscr{G}}_{\varepsilon}}\frac{1}{2}  \int_{I_\varepsilon(x_\alpha)}{ |{g_{\varepsilon}}| \dd x}\bigg) \geq 2 f \bigg( \frac{1}{2} \int_{(\bar{x}_{\varepsilon}^- ,\bar{x}_{\varepsilon}^+)}{ |g_{\varepsilon}|  \dd x} \bigg), \label{inseration1}
\end{align} 
for all $\varepsilon < \delta$ (notice the inclusion $(\bar{x}_{\varepsilon}^- ,\bar{x}_{\varepsilon}^+) \subset \bigcup\{I_\varepsilon(x_\alpha) \colon x_\alpha \in \bar{\mathscr{G}}_{\varepsilon}\}$). For the argument on the right-hand side, we observe from the Cauchy--Schwarz inequality, the inequalities in~\eqref{goodappr} and from the energy bound
\begin{align*}
  \int_{(\bar{x}_{\varepsilon}^- ,\bar{x}_{\varepsilon}^+)}{ |g_{\varepsilon}|  \dd x} 
   & \geq \Bigg[  \int_{(\bar{x}_{\varepsilon}^- ,\bar{x}_{\varepsilon}^+)}{ |{u}^{\prime}_{\varepsilon}|  \dd x} - \int_{(\bar{x}_{\varepsilon}^- ,\bar{x}_{\varepsilon}^+)}{ |{g_{\varepsilon}} -{u}^{\prime}_{\varepsilon}| \dd x} \Bigg]_+ \\
   & \geq \Big[  |{u}_{\varepsilon} (\bar{x}_{\varepsilon}^+) - u_{\varepsilon}(\bar{x}_{\varepsilon}^-) | -  |\bar{x}_{\varepsilon}^+ - \bar{x}_{\varepsilon}^-|^{\frac{1}{2}} \norm{\ggae - \ue^{\prime}}_{L^2(a,b)}  \Big]_+ \\
   & \geq \Big[ |u(\bar{x}+)-u(\bar{x}-)| -2 \delta - (2 \delta)^{\frac{1}{2}} C_0^{\frac{1}{2}} \Big]_+.
\end{align*}
Using once again the fact that~$f$ is increasing, we can continue to estimate~\eqref{inseration1} from below via
\begin{equation*}
 E_{\varepsilon}({\ue},{\gae},A) \geq  2  f  \left( \frac{1}{2} \Big[ |[u](\bar{x})| -2 \delta - C_0^{\frac{1}{2}} \, (2 \delta)^{\frac{1}{2}} \Big]_+ \right).
\end{equation*}
Now, passing to the $\liminf$ as $\varepsilon \to 0$ and then letting $\delta \to 0^+$, we obtain~\eqref{eqn_jump_point_lower_estimate}.

\emph{Step 2.} For an arbitrary $M \in \N$ with $M \leq \#(J_u \cap A)$ we select a set $\lbrace x_1, \dots, x_M \rbrace$ containing~$M$ points of $J_u \cap A$ and pairwise disjoint open intervals $I_1, \dots , I_M$ in $A$ such that $x_i \in I_i$ for all $i = 1, \dots, M$. First, we apply the monotonicity and superadditivity of $E^{\prime}(u,\gamma, \, \cdot \,)$ (see Remark~\ref{remark_properties_lower_limit}) and then the estimate of Step 1 for~$I_i$ instead of $A$. This yields
\begin{equation*}
E^{\prime}(u,\gamma, A ) \geq \sum_{i = 1}^{M} E^{\prime}(u,\gamma,I_i) \geq  \sum_{i = 1}^{M} 2  f  \left( \frac{1}{2} |[u](x_i)| \right).
\end{equation*}
Since $M$ is arbitrary and $J_u$ is at most countable, the claim of the proposition follows.
\end{proof}

\subsection{Estimate from below of the volume and Cantor terms}
\label{subsec: estimate from below volume part 1d}

We basically follow the idea of Lussardi and Vitali from \cite[Lemma 4.3 and Lemma 4.4]{LuVi97}. We start by proving that approximation sequences in $W^{1,1}(a,b) \times \M$ can be modified in such a way that in the limit we additionally have the optimal $L^\infty$-estimate.

\begin{lemma}\label{LVtrunE}
Let $(u,\gamma) \in  BV(a,b) \times \mathcal{M}(a,b)$  with $\norm{u}_{L^{\infty}(a,b)} \leq K$, $\gamma = D^su+ g \mathcal{L}^1$ and $u^{\prime}- g \in L^2(a,b)$. Furthermore, let $\{(u_{\varepsilon},\gamma_\varepsilon)\}_{\varepsilon}$ be a sequence in $W^{1,1}(a,b) \times \M$ with $\norm{\ue}_{L^{\infty}(a,b)} \leq K$, $\gae = g_{\varepsilon} \mathcal{L}^1$  and $\ue^{\prime}- \ggae \in L^2(a,b)$ for all $\varepsilon>0$ such that $\{\ue\}_{\varepsilon}$ converges to~$u$ a.e.~in $(a,b)$ and in $L^1(a,b)$ and such that $\{\gae\}_{ \varepsilon}$ converges to~$\gamma$ in the flat norm. There exists a sequence $\{(\bar{u}_{\varepsilon}, \bar{\gamma}_{\varepsilon})\}_{\varepsilon}$ in $W^{1,1}(a,b) \times \mathcal{M}(a,b)$  with $\norm{\bue}_{L^{\infty}(a,b)} \leq K$, $\bar{\gamma}_{\varepsilon}= \bar{g}_{\varepsilon} \mathcal{L}^1$ for some $\bar{g}_{\varepsilon} \in L^1(a,b)$ and $E_{\varepsilon}(\bar{u}_{\varepsilon}, \bar{\gamma}_{\varepsilon}) \leq E_{\varepsilon}(\ue, \gae)$ for all $\varepsilon$ such that $\{\bar{u}_{\varepsilon}\}_{\varepsilon}$ converges to~$u$ a.e.~in $(a,b)$ and in $L^q(a,b)$ for all $1 \leq q < \infty$ and
\begin{equation*}
\limsup_{\varepsilon \to 0} \norm{\bar{u}_{\varepsilon} - u}_{L^{\infty}(a,b)} \leq \sup\big\{ |[u](x)| \colon x \in J_u \big\}.
\end{equation*}
If, in addition, the energies $\{E_{\varepsilon}(\ue, \gae)\}_{\varepsilon}$ are bounded, then $\{\bar{\gamma}_{\varepsilon}\}_{\varepsilon}$ converges to~$\gamma$ in the flat norm.
\end{lemma}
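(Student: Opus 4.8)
The plan is to construct the truncated sequence $\bue$ by a level-set cut-off at heights $\pm K$ applied to $\ue$, following the classical truncation trick for $BV$-functionals, and to construct $\bgae$ by simply restricting $\gae$ to the region where $\ue$ has not been truncated (with a harmless correction near the truncation interfaces so that $\bue'-\bgae \in L^2$ and the non-local $f$-term does not increase). Concretely, for a fixed level $t \in [-K,K]$ one replaces $\ue$ by its truncation at $\pm t$; but since $\norm{u}_{L^\infty}\le K$ is already assumed and $\norm{\ue}_{L^\infty}\le K$, a cruder device suffices: on the (finitely many, by Lemma~\ref{app}) ``bad'' intervals one sets $\bue$ equal to a constant near $\pm K$ and correspondingly sets $\bgae = 0$ there, while on the ``good'' intervals one keeps $\bue = \ue$ and $\bgae = \gae$. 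The key point is that throwing away the part of $\gae$ where $\ue$ is large can only decrease $\int |u'-g|^2$ and, because $f$ is nondecreasing, can only decrease $\frac1\varepsilon\int f(\varepsilon\dashint_{I_\varepsilon}|g|)$; thus $E_\varepsilon(\bue,\bgae)\le E_\varepsilon(\ue,\gae)$ for every $\varepsilon$.

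First I would make the truncation precise: following \cite[Lemma 4.3]{LuVi97}, I use a co-area / averaging argument over the truncation level. For $s\in(0,K)$ let $\ue^{(s)}$ be $\ue$ truncated to the interval of values whose distance to the mean is at most $s$ (more precisely, truncate $\ue$ at a level chosen from the essential range so that the new function stays within $[-K,K]$ and differs from $\ue$ only where $|\ue|$ is close to $K$). Averaging the energy over $s$ and using that $u'\cdot\mathbbm 1_{\{|\ue|\le s\}}$ is the derivative of the truncation, one selects a level $s=s_\varepsilon$ for which the truncated pair $(\bue,\bgae)$, with $\bgae \coloneqq \gae\mrs\{|\ue|\le s_\varepsilon\}$, satisfies $E_\varepsilon(\bue,\bgae)\le E_\varepsilon(\ue,\gae)$. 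Next, I verify the stated convergences: since $\bue \to u$ a.e.\ (the truncation sets shrink to a null set as the jump heights of $u$ are the only obstruction to $L^\infty$-approximation) and $\norm{\bue}_{L^\infty}\le K$, dominated convergence gives $\bue\to u$ in $L^q(a,b)$ for every $q<\infty$. The $L^\infty$-defect estimate $\limsup_\varepsilon\norm{\bue-u}_{L^\infty}\le\sup\{|[u](x)|:x\in J_u\}$ comes from the observation that away from the jump set $u$ is approximately continuous and $\ue\to u$ uniformly on compact subsets of the continuity set (after the truncation), so any persistent $L^\infty$ gap can only build up across a jump of $u$, of size at most the jump height.

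The main obstacle, and the place requiring real care, is the last claim: under the additional hypothesis $\sup_\varepsilon E_\varepsilon(\ue,\gae)<\infty$, that $\bgae\to\gamma$ in the flat norm. The difficulty is exactly the ``coupling by regularity'' issue flagged in the introduction: $\bgae$ differs from $\gae$ by $\gae\mrs\{|\ue|>s_\varepsilon\}$, and one must show this difference goes to zero in the flat norm. I would estimate $\normf{\gae\mrs\{|\ue|>s_\varepsilon\}}$ by splitting $g_\varepsilon = (g_\varepsilon - u_\varepsilon') + u_\varepsilon'$ on that set: the $L^2$-bounded part $g_\varepsilon-u_\varepsilon'$ restricted to a set of small measure is small in $L^1$ hence in flat norm by \eqref{eq:flat_trivial_estimate}, while $u_\varepsilon'\mathbbm 1_{\{|\ue|>s_\varepsilon\}}\mathcal L^1$ is the derivative of $\ue-\bue$ (a uniformly $L^\infty$-bounded function supported where $|\ue|$ is near $K$, which converges to $0$ in $L^1$ on the complement of the jump set of $u$), so $\normf{\cdot}\le\norm{\ue-\bue}_{L^1}+ (\text{boundary terms})\to0$ by \eqref{eq:flat_trivial_estimate} again. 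Combining with $\gae\to\gamma$ in the flat norm yields $\bgae\to\gamma$. I expect the bookkeeping of the finitely many interface points (ensuring $\bue\in W^{1,1}$ rather than merely $SBV$, or alternatively regularizing the cut-off so no jumps are introduced) to be the only other technical nuisance; it is handled by choosing the truncation level $s_\varepsilon$ at a point of approximate continuity of $\ue$, or by mollifying the indicator at scale $o(\varepsilon)$.
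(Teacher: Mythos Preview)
Your proposal has a genuine gap at the crucial step, the $L^\infty$-defect estimate
\[
\limsup_{\varepsilon\to 0}\norm{\bar u_\varepsilon-u}_{L^\infty(a,b)}\le \sigma:=\sup\{|[u](x)|:x\in J_u\}.
\]
You propose a \emph{global} truncation of $u_\varepsilon$ at levels $\pm s_\varepsilon$ (or at levels near $\pm K$, or within distance $s$ of the mean). This cannot produce the bound above. Consider $u\equiv 0$ (so $\sigma=0$), $K=1$, and $u_\varepsilon$ a single triangular spike of height $1$ and base width $\varepsilon$ centred at $1/2$, with $g_\varepsilon=u_\varepsilon'$. All the hypotheses hold ($u_\varepsilon\to 0$ a.e.\ and in $L^1$, $\gamma_\varepsilon=Du_\varepsilon\to 0$ in the flat norm), yet truncating at any level $s_\varepsilon$ comparable to $K$ leaves $\norm{\bar u_\varepsilon}_{L^\infty}\approx 1\not\to 0$. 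If instead $s_\varepsilon\to 0$, you lose $\bar u_\varepsilon\to u$ whenever $u$ is nonconstant. More generally, your claim that ``$u_\varepsilon\to u$ uniformly on compact subsets of the continuity set (after the truncation)'' is unfounded: $u_\varepsilon\in W^{1,1}$ can oscillate arbitrarily between $-K$ and $K$ on sets of vanishing measure regardless of how tame $u$ is, and a global cut-off does not suppress oscillations \emph{relative to the local values of~$u$}.

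The paper's construction is quite different and addresses exactly this. One first fixes a fine partition $a=x_0<\dots<x_{k+1}=b$ of mesh $<\delta_n$, chosen so that $u_\varepsilon(x_i)\to u(x_i)$ and so that on each subinterval $u$ varies by at most $\sigma+O(1/n)$ (this uses the decomposition $u=u_a+u_j+u_c$ and uniform continuity of $u_a+u_c$). On each interior interval $[x_i,x_{i+1}]$ one then truncates $u_\varepsilon$ \emph{locally} to the range $[\min(u_\varepsilon(x_i),u_\varepsilon(x_{i+1}))-\tfrac4n,\ \max(u_\varepsilon(x_i),u_\varepsilon(x_{i+1}))+\tfrac4n]$. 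Because the endpoint values are close to $u(x_i),u(x_{i+1})$ and $u$ itself stays in roughly this range on the interval, one gets $|\bar u_\varepsilon-u|\le \sigma+O(1/n)$ everywhere, which is the desired estimate. The matching of endpoint values guarantees $\bar u_\varepsilon\in W^{1,1}$ without any extra smoothing. Your energy monotonicity argument and your flat-norm argument for $\bar\gamma_\varepsilon\to\gamma$ (splitting $g_\varepsilon=(g_\varepsilon-u_\varepsilon')+u_\varepsilon'$ on $\{u_\varepsilon\neq\bar u_\varepsilon\}$ and using \eqref{eq:flat_trivial_estimate}) are essentially correct and match what the paper does; the missing ingredient is the interval-wise truncation that makes the $L^\infty$-defect bound work.
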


\begin{proof}
We follow the outline of the proof for \cite[Lemma 4.3]{LuVi97}, which, however, needs some modifications due to the additional variable $\gamma$. In what follows, we may assume that the precise representatives of~$u$ and~$\ue$ for each~$\varepsilon$ are considered. The function~$u$ can be decomposed as $u_a+u_j+u_c$, see~\eqref{decomposition_u_1d}, where~$u_j$ is a jump function with jump discontinuities at any point of~$J_u$ and where $u_a+u_c$ is uniformly continuous in $(a,b)$. We set 
\begin{equation*}
 \sigma  \coloneqq  \sup\big\{ |[u_j](x)| \colon x \in J_u \big\}
\end{equation*}
and first claim that for every $n \in \N$ there exists $\delta_{n} \in (0,\tfrac{1}{n}]$ such that
\begin{align}
x, y \in (a,b) \text{ with }|x-y| < \delta_{n} \quad \Rightarrow \quad & |u_j(x)-u_j(y)| < \sigma + \frac{1}{n},  \label{estimatediameter} \\
 & | u_a(x) + u_c(x) - u_a(y) - u_c(y)| < \frac{1}{n} \label{smallabscont_cantor} .
\end{align}
In fact, there are only finitely many points $\bar{x}_1, \dots, \bar{x}_{m(n)}$ in $J_u$ that have to be excluded to deduce that
\begin{align}
 \sum_{z \in J_u \setminus \lbrace\bar{x}_1, \dots, \bar{x}_{m(n)}\rbrace}|[u_j](z)| < \frac{1}{n}. \label{smalljumps}
\end{align}
By choosing~$\delta_{n}>0$ sufficiently small we can guarantee~\eqref{estimatediameter} by~\eqref{smalljumps} and the definition of~$\sigma$ provided that each interval of length $\delta_{n}$ contains at most one $\bar{x}_j$, and we can further ensure~\eqref{smallabscont_cantor}, by the uniform continuity of $u_a + u_c$. We then consider a partition~$P_n$ of $(a,b),$ i.e.,
\begin{equation*}
a=x_0 < x_1 < \ldots <x_{k} < x_{k+1} = b
\end{equation*}
(where the dependence of the points on~$n$ is not written explicitly) such that the mesh size is less than~$\delta_n$, i.e., $x_{i+1} -x_{i} < \delta_n$ for all $i \in \{ 0, \ldots, k\}$, $x_i \notin J_u$ and $u_{\varepsilon}(x_i) \to u(x_i)$ as $\varepsilon \to 0$ for every $i \in\{ 1, \dots, k\}$, which is possible by the pointwise convergence $\ue \to u$ a.e.~in~$(a,b)$. Since by construction of~$\delta_n$ at most one of the points $\bar{x}_1, \dots, \bar{x}_{m(n)} \in J_u$, where a large jump of~$u_j$ occurs, may belong to the interval $[x_i,x_{i+1}]$, we necessarily have $|u_j(x) - u_j(y)| < \tfrac{1}{n}$ for $y = x_i$ or $y=x_{i+1}$ such that as a consequence of~\eqref{smallabscont_cantor} there holds
\begin{equation}
u(x) \in \left[ \min \{ u(x_{i}) ,  u(x_{i+1})\} - \frac{2}{n}, \max \{ u(x_{i}) ,  u(x_{i+1})\}  +  \frac{2}{n}\right]   \label{estiamteeachintervall}
\end{equation}
for all  $x \in [x_{i}, x_{i+1}]$ and every $i \in \{ 1, \ldots, k-1\}$. 

After having fixed the partitions~$P_n$, we can now start with the construction of the sequence  $\{\bue\}_{\varepsilon}$. Since $\ue \to u$ in measure and $u_{\varepsilon}(x_i) \to u(x_i)$ for every $i \in\{ 1, \dots, k\}$, we can fix a ``level'' $\bar{\varepsilon}_n$ for each $n \in \N$  such that
\begin{equation}
\Big| \Big\{ x \in (a,b) \colon |\ue(x) -u(x)| \geq \frac{1}{n} \Big\} \Big| \leq \frac{1}{n} \quad \text{for all  } \varepsilon < \bar{\varepsilon}_n \label{kappaarbi}
\end{equation}
and
\begin{equation}
|u_{\varepsilon}(x_i)-u(x_i)| < \frac{1}{n} \quad \text{for every } i=1, \dots , k  \text{ and all } \varepsilon < \bar{\varepsilon}_n. \label{estimatemeshpoints}
\end{equation}
Notice that we can choose $\{\bar{\varepsilon}_n\}_n$ strictly decreasing and such that $\bar{\varepsilon}_n \to 0^+$ as $n \to \infty$. For $\varepsilon > \bar{\varepsilon}_1$ we then set $\bue  \coloneqq  \ue$ and $\bar{\gamma}_{\varepsilon}  \coloneqq  \gamma_{\varepsilon}$. Otherwise, if $\varepsilon \in (0, \bar{\varepsilon}_1]$, we first determine the unique $n = n(\varepsilon) \in \N$ with $ \varepsilon \in (\bar{\varepsilon}_{n+1}, \bar{\varepsilon}_{n}]$. On the first and the last interval of~$P_n$ we set $\bue$ equal to  $\ue(x_1)$ and $\ue(x_k)$, respectively. On an arbitrary interior interval $[\alpha, \beta]$ of the form $[x_i, x_{i+1}]$ for some $i \in \{1, \dots, k-1\}$ we define, after assuming without loss of generality $\ue(\alpha) \leq \ue(\beta)$,
\begin{equation*}
\bue(x) : = \min \left\lbrace  \max \left\lbrace\ue(x) , \ue(\alpha) - \frac{4}{n}   \right\rbrace , \ue(\beta) + \frac{4}{n} \right\rbrace
\end{equation*}
for every $x \in [\alpha, \beta]$, which is the projection of~$\ue$ onto $[\ue(\alpha) - 4/n ,\ue(\beta) + 4/n]$. Because of $\bue(x_i)=\ue(x_i)$ for every $i \in \{1, \dots, k\}$ and $\ue \in W^{1,1}(a,b)$, we clearly have $\bue \in W^{1,1}(a,b)$. Moreover, the $L^\infty$ bound on~$\ue$ with constant~$K$ directly transfers to~$\bue$. 

We next study the asymptotic behavior of the sequence $\{\bue\}_{\varepsilon}$. From the definition of~$\bue$ and with~\eqref{estimatemeshpoints}, we observe for all $\varepsilon \leq \bar{\varepsilon}_1$
\begin{equation*}
\bue(x) \in \left[ \min \{ u(x_{i}) ,  u(x_{i+1})\} - \frac{5}{n}, \max \{ u(x_{i}) ,  u(x_{i+1})\}  +  \frac{5}{n}\right]
\end{equation*}
which, via~\eqref{estimatediameter} and~\eqref{smallabscont_cantor}, implies
\begin{equation*}
|\bue(x) - u(x)| \leq  \sigma + \frac{7}{n},
\end{equation*}
for all  $x \in [x_{i}, x_{i+1}]$ and every $i \in \{ 1, \ldots, k-1\}$. Since the latter estimate is also satisfied for the first and the last interval of the partition, we then infer from $n=n(\varepsilon) \to \infty$ as $\varepsilon \to 0$ the estimate
\begin{equation*}
\limsup_{\varepsilon \to 0}\norm{\bue - u}_{L^{\infty}(a,b)} \leq \sigma = \sup\big\{ |[u](x)| \colon x \in J_u \big\}.
\end{equation*}
In order to show the convergence claims of $\{\bue\}_{\varepsilon}$, we again consider an arbitrary interior interval $[\alpha,\beta]$ of the partition~$P_n$. If we denote the pointwise projection of $\ue$ onto $[\ue(\alpha)- 3/n, \ue(\beta)+3/n]$ by $\ue^*$, we have
\begin{equation*}
|\bue - u| \leq |\bue - \ue^*| + |\ue^* - u| \leq \frac{1}{n} + |\ue - u|\quad  \text{ in } [\alpha, \beta],
\end{equation*}
as we know $u(x) \in [\ue(\alpha)- 3/n, \ue(\beta)+3/n]$ due to~\eqref{estiamteeachintervall} and~\eqref{estimatemeshpoints}. Since the length of the first and last interval of~$P_n$ vanish in the limit $n \to \infty$ and hence for $\varepsilon \to 0$, this implies the pointwise convergence of $\{\bue\}_{\varepsilon}$ to~$u$ a.e.~on $(a,b)$. In addition, as $\{\bue\}_{\varepsilon}$ is bounded in $L^\infty(a,b)$, convergence in $L^q(a,b)$ for  all $1 \leq q < \infty$ follows from the dominated convergence theorem. For later purposes, we notice from the definition of~$\bar{u}_{\varepsilon}$ and the previous inclusion for~$u$ that for every $\varepsilon$ we have 
\begin{multline}
\{ x \in (a,b) \colon u_{\varepsilon}(x)\ne\bar{u}_{\varepsilon}(x) \}  \\
 \subset \Big\{ x \in (a,b) \colon |\ue(x) -u(x)| \geq \frac{1}{n} \Big\}  \cup (x_0,x_1) \cup (x_k,x_{k+1}). \label{setinclusion} 
\end{multline}

We next define the sequence $\{\bar{\gamma}_{\varepsilon}\}_{\varepsilon}$ in~$\M$ by setting for every $\varepsilon$ 
\begin{align*}
\bar{g}_{\varepsilon}(x): = \left\{
\begin{array}{l l}
g_{\varepsilon}(x)& \quad \text{if } u_{\varepsilon}(x)=\bar{u}_{\varepsilon}(x),\\
0 & \quad \text{otherwise},
\end{array}
\right.
\end{align*} 
and $\bar{\gamma}_{\varepsilon}  \coloneqq  \bar{g}_{\varepsilon} \mathcal{L}^1$.
Since there holds $\bue^{\prime} = 0$ on $\{x \in (a,b) \colon  \bue(x) \neq \ue(x)\}$ and $\bue^{\prime} = {u}_{\varepsilon}'$ on $\{x \in (a,b) \colon  \bue(x) = \ue(x)\}$, it follows that 
\begin{align*}
E_{\varepsilon}(\bar{u}_{\varepsilon}, \bar{\gamma}_{\varepsilon}) & = \int_{a}^b {|\bar{u}_{\varepsilon}'(x)- \bar{g}_{\varepsilon}(x)|^2 \diff x} + \frac{1}{\varepsilon} \int_a^b f \bigg( \varepsilon \dashint_{I_\varepsilon(x) \cap (a,b)}{ |\bar{g}_{\varepsilon}(t)| \diff t } \bigg) \diff x \\
 & \leq \int_a^b {|{u}_{\varepsilon}'(x)- {g}_{\varepsilon}(x)|^2 \diff x} +  \frac{1}{\varepsilon} \int_a^b f \bigg( \varepsilon \dashint_{I_\varepsilon(x) \cap (a,b)}{ |g_{\varepsilon}(t)| \diff t } \bigg) \diff x =  E_{\varepsilon}(\ue, \gae). 
\end{align*}

If, in addition, $\{E_{\varepsilon}(\ue, \gae)\}_{\varepsilon}$ is a bounded sequence,
then $\{\bar{u}_{\varepsilon}^{\prime} - \bar{g}_{\varepsilon}\}_{\varepsilon}$ is a bounded sequence in $L^2(a,b)$. In view of~\eqref{eq:flat_trivial_estimate} and the Cauchy--Schwarz inequality we then notice for $1 < q < \infty$ 
\begin{align*}
 \normf{\bar{\gamma}_{\varepsilon} - \gae} & \leq \normf{D\bar{u}_{\varepsilon} - Du_{\varepsilon}} + \normf{(\bar{\gamma}_{\varepsilon} - D\bar{u}_{\varepsilon}) - (\gae - Du_{\varepsilon}) } \\
 & \leq \norm{\bar{u}_{\varepsilon} - u_{\varepsilon}}_{L^1(a,b)} +  \normf{(g_{\varepsilon} - u_{\varepsilon}') \1_{\{ u_{\varepsilon} \neq \bar{u}_{\varepsilon} \}} \mathcal{L}^1} \\
 & \leq \norm{\bar{u}_{\varepsilon} - u_{\varepsilon}}_{L^1(a,b)} + \norm{(g_{\varepsilon} - u_{\varepsilon}') \1_{\{ u_{\varepsilon} \neq \bar{u}_{\varepsilon} \}} }_{L^1(a,b)} \\
 & \leq \norm{\bar{u}_{\varepsilon} - u_{\varepsilon}}_{L^1(a,b)} + \norm{g_{\varepsilon} - u_{\varepsilon}'}_{L^2(a,b)} |\{ x \in (a,b) \colon  u_{\varepsilon}(x) \neq \bar{u}_{\varepsilon}(x) \}|^{\frac{1}{2}}. 
\end{align*}
If we consider the limit $\varepsilon \to 0$ on the right-hand side, the first term disappears, since we have the convergence $\bar{u}_{\varepsilon} - u_{\varepsilon} \to 0$ in $L^1(a,b)$, while the second term disappears by~\eqref{setinclusion} combined with~\eqref{kappaarbi} and the fact that the length of the intervals in~$P_n$ vanish for $\varepsilon \to 0$. Therefore, we have $\bar{\gamma}_{\varepsilon} - \gae \to 0$ in the flat norm. Since by assumption there holds $\gae \to \gamma$ in the flat norm, we conclude that we also have $\bar{\gamma}_{\varepsilon} \to \gamma$  in the flat norm, which completes the proof of the lemma.
\end{proof}

For a localization procedure, we further need the following statement on the relation between $E^{\prime}(u, \gamma, I)$ and the $\Gamma$-lower limit $E^{\prime}(u^I, \gamma^I)$, where $I \subset (a,b)$ is an open interval, $u^I$ is the extension of $u|_I$ to $(a,b)$ with inner traces and $\gamma^I$ is the restriction of~$\gamma$ to $I$.

\begin{lemma}\label{cutoff}
Let $I = (\alpha, \beta)$ be an open interval in $(a,b)$. Let $(u,\gamma) \in  BV(a,b) \times \mathcal{M}(a,b)$  with $\norm{u}_{L^{\infty}(a,b)} \leq K$, $\gamma = D^su+ g \mathcal{L}^1$ and $u^{\prime}- g \in L^2(a,b)$. Then for $(u^I,\gamma^I) \in BV(a,b) \times \M$ defined via 
\begin{align*}
u^I(x): = \left\{
\begin{array}{l l}
u(\alpha +) & \quad x \in  (a, \alpha),\\
u(x) & \quad x \in [\alpha,\beta], \\
u(\beta -) & \quad x \in  (\beta, b),
\end{array}
\right.
\end{align*}
and $ \gamma^I: = \gamma \mrs I$ there holds 
\begin{equation*}
E^{\prime}(u^I, \gamma^I) \leq E^{\prime}(u, \gamma, I).
\end{equation*}
\end{lemma}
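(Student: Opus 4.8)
The plan is to take any admissible recovery sequence $\{(\ue,\gae)\}_\varepsilon$ for $(u,\gamma)$ on the interval $I$, i.e.\ $\ue \to u$ in $L^1(a,b)$, $\gae \to \gamma$ in the flat norm, with $\liminf_{\varepsilon\to0} E_\varepsilon(\ue,\gae,I)$ close to $E^\prime(u,\gamma,I)$, and to modify it into an admissible recovery sequence $\{(\bue,\bgae)\}_\varepsilon$ for the pair $(u^I,\gamma^I)$ \emph{on the whole of $(a,b)$} without increasing the energy in the limit. Since $\gamma^I = \gamma\mrs I$ has no mass outside $I$ and $u^I$ is constant on $(a,\alpha)$ and on $(\beta,b)$, the natural choice is to set $\bue$ equal to $\ue$ on a slightly shrunk interval $I_\delta = (\alpha+\delta,\beta-\delta)$, to glue it continuously to the constant values $u(\alpha+)$ on $(a,\alpha)$ and $u(\beta-)$ on $(\beta,b)$ by an affine (or $W^{1,1}$) interpolation on the thin transition layers $(\alpha-\delta,\alpha+\delta)$ and $(\beta-\delta,\beta+\delta)$, and to set $\bgae = \gae\mrs I_\delta \cdot \1$ (i.e.\ $\bar g_\varepsilon = g_\varepsilon \1_{I_\delta}$). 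By construction $E_\varepsilon(\bue,\bgae,(a,b))$ picks up (i) the contribution $E_\varepsilon(\ue,\gae,I_\delta) \le E_\varepsilon(\ue,\gae,I)$ on $I_\delta$, plus (ii) a Dirichlet-type term on the two transition layers coming from $|\bue^\prime|^2$ (here $\bar g_\varepsilon = 0$), plus (iii) the nonlocal term $\tfrac{1}{\varepsilon}\int f(\varepsilon\dashint |g_\varepsilon|)$ evaluated over all of $(a,b)$ — but since $\bar g_\varepsilon$ is supported in $I_\delta$ and the averaging radius is $\varepsilon$, for $\varepsilon$ small this integral is supported within an $\varepsilon$-neighborhood of $I_\delta$, hence inside $I$, so it is bounded by the nonlocal part of $E_\varepsilon(\ue,\gae,I)$.

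The two main technical points are the choice of the cut positions and the control of the transition-layer energy. For the cut positions: by the slicing/trace structure of $BV$ functions in one dimension, for a.e.\ choice of $\delta$ near $0$ one has $\ue \to u$ in an $L^1$-sense on slices near $\alpha\pm\delta$ and $\beta\pm\delta$, and more importantly $u$ has inner traces $u(\alpha+), u(\beta-)$; using that $\ue\to u$ in measure one can (as in the proof of Proposition~\ref{JPEP}, cf.~\eqref{goodappr}) select points $\alpha_\varepsilon \in (\alpha,\alpha+\delta)$, $\beta_\varepsilon \in (\beta-\delta,\beta)$ with $|\ue(\alpha_\varepsilon) - u(\alpha+)|<\delta$ and $|\ue(\beta_\varepsilon)-u(\beta-)|<\delta$. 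One then defines $\bue$ to equal $\ue$ on $(\alpha_\varepsilon,\beta_\varepsilon)$, to be the affine interpolation between $\ue(\alpha_\varepsilon)$ and $u(\alpha+)$ on a layer of length $\ell_\varepsilon \to 0$ just to the left of $\alpha_\varepsilon$ (and symmetrically at $\beta_\varepsilon$), and to be constant $=u(\alpha+)$ (resp.\ $=u(\beta-)$) elsewhere to the left (resp.\ right). The Dirichlet cost of the left layer is $\int |\bue^\prime|^2 = |\ue(\alpha_\varepsilon)-u(\alpha+)|^2/\ell_\varepsilon$; choosing $\ell_\varepsilon$ so that this quotient tends to $0$ — e.g.\ $\ell_\varepsilon = |\ue(\alpha_\varepsilon)-u(\alpha+)|$ if the numerator is positive, and any $\ell_\varepsilon\to0$ otherwise — makes the layer contribution negligible, while still $\ell_\varepsilon\to0$ keeps the layers shrinking so that $\bue\to u^I$ in $L^1(a,b)$. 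The $L^\infty$ bound $\norm{\bue}_{L^\infty}\le K$ is preserved since $\bue$ takes values between $\ue$ and the traces $u(\alpha+),u(\beta-)$, all bounded by $K$.

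The flat-norm convergence $\bgae \to \gamma^I$ is checked exactly as in Lemma~\ref{LVtrunE}: write $\bgae - \gamma^I = (\bgae - D\bue) + (D\bue - \gamma\mrs I)$ is not quite the right split; rather one compares $\bgae$ with $\gae\mrs I$. We have $\normf{\bgae - \gae\mrs I} = \normf{(g_\varepsilon\1_{I_\delta} - g_\varepsilon\1_I)\mathcal L^1} \le \norm{g_\varepsilon}_{L^1(I\setminus I_\delta)}$, which is small because $I\setminus I_\delta$ has measure $2\delta$ and, using $u_\varepsilon^\prime - g_\varepsilon$ bounded in $L^2$ together with the bound on $\int_I|u_\varepsilon^\prime|$ from the $v_\varepsilon$-construction of Theorem~\ref{com}, $\int_{I\setminus I_\delta}|g_\varepsilon| \le \int_{I\setminus I_\delta}|u_\varepsilon^\prime| + (2\delta)^{1/2}\norm{u_\varepsilon^\prime - g_\varepsilon}_{L^2}$; one first sends $\varepsilon\to0$ (using weak-$*$ convergence of $u_\varepsilon^\prime\mathcal L^1$ to $Du$ and absolute continuity of $|Du|$ to kill the boundary layer, or a diagonal argument over a sequence $\delta\to0$) and then $\delta\to0$. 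Since $\gae\mrs I \to \gamma\mrs I = \gamma^I$ in the flat norm by hypothesis, we conclude $\bgae\to\gamma^I$. Putting the three energy contributions together and passing to $\liminf_{\varepsilon\to0}$ then to the infimum over admissible sequences gives $E^\prime(u^I,\gamma^I) \le E^\prime(u,\gamma,I)$.

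\textbf{Main obstacle.} The delicate point is not any single estimate but the \emph{simultaneous} bookkeeping of the two $\delta$-limits and the $\varepsilon$-limit: one must ensure that the transition layers can be made to carry negligible Dirichlet energy while still shrinking to the points $\alpha,\beta$, and that the nonlocal term — whose $\varepsilon$-averaging can spill mass a distance $\varepsilon$ outside $I_\delta$ — does not leak energy outside $I$ in the relevant $\varepsilon$ range. This is handled by first fixing $\delta$ (choosing it, if necessary, among a.e.\ values so that no atom of $|D^cu|$ or jump of $u$ sits at $\alpha\pm\delta,\beta\pm\delta$, though here since $u^I$ only sees $u$ on $[\alpha,\beta]$ this is automatic), running the $\varepsilon\to0$ limit, and only afterwards letting $\delta\to0$ via a diagonal sequence — at which stage the residual terms $\propto \delta$ and $\propto\ell_\varepsilon$ all vanish. $\qed$
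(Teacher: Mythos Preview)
Your overall strategy---modify a recovery sequence for $E'(u,\gamma,I)$ into one for $E'(u^I,\gamma^I)$ by cutting and extending---is sound, but the execution is more complicated than necessary and contains a real gap in the flat-norm step.

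\textbf{The gap.} You assert that $\gamma_\varepsilon\mrs I\to\gamma\mrs I$ in the flat norm ``by hypothesis''. This is not part of the hypothesis: you only know $\gamma_\varepsilon\to\gamma$ in the flat norm on $(a,b)$, and restriction to an open subinterval does \emph{not} commute with flat convergence (mass of $\gamma_\varepsilon$ can escape through $\partial I$). Your attempted fix, bounding $\int_{I\setminus I_\delta}|g_\varepsilon|$ by $\int_{I\setminus I_\delta}|u_\varepsilon'|+(2\delta)^{1/2}\|u_\varepsilon'-g_\varepsilon\|_{L^2}$ and then invoking ``weak-$*$ convergence of $u_\varepsilon'\mathcal L^1$ to $Du$'', does not work either: there is no $L^1$-bound on $u_\varepsilon'$ available (the energy only controls $u_\varepsilon'-g_\varepsilon$ in $L^2$ and the nonlocal term in $g_\varepsilon$), so $u_\varepsilon'\mathcal L^1$ need not converge weak-$*$, and Theorem~\ref{com} bounds $\int|v_\varepsilon'|$ only on the \emph{good} grid intervals, which may well miss $I\setminus I_\delta$. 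The correct route is to observe that $u_\varepsilon'-g_\varepsilon$ is bounded in $L^2$ and converges in the flat norm to $u'-g$, hence (via compactness of $L^2\hookrightarrow W^{-1,2}$) weakly in $L^2$; multiplication by $\1_{I_\eta}$ preserves this, and weak $L^2$ convergence implies flat convergence. Combined with $D\bar u_\varepsilon\to Du^I$ in the flat norm (from $\bar u_\varepsilon\to u^I$ in $L^1$), this repairs your argument.

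\textbf{The paper's simpler route.} The paper avoids your transition layers and the diagonal bookkeeping altogether. It fixes $\eta>0$, chooses $\alpha_\eta\in(\alpha,\alpha+\eta)$, $\beta_\eta\in(\beta-\eta,\beta)$ at which $u_\varepsilon\to u$ pointwise and $|u(\alpha_\eta)-u(\alpha+)|+|u(\beta_\eta)-u(\beta-)|<\eta$, and then extends $u_\varepsilon$ by its \emph{own} boundary values $u_\varepsilon(\alpha_\eta)$, $u_\varepsilon(\beta_\eta)$ (not by $u(\alpha+)$, $u(\beta-)$). This produces a $W^{1,1}$ function with no interpolation layer and hence no extra Dirichlet cost; the resulting sequence converges to $(u^{I_\eta},\gamma^{I_\eta})$, giving $E'(u^{I_\eta},\gamma^{I_\eta})\le E'(u,\gamma,I)$. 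Only then does one send $\eta\to0$: since $u^{I_\eta}\to u^I$ in $L^1$ and $\gamma^{I_\eta}\to\gamma^I$ in the flat norm, lower semicontinuity of the $\Gamma$-lower limit $E'$ finishes the proof. The two-step structure (first $\varepsilon\to0$ to an $\eta$-dependent target, then $\eta\to0$ via lsc of $E'$) replaces your diagonal argument and your transition-layer estimates in one stroke.
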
 

\begin{proof}
We proceed analogously to the proof of \cite[Lemma 4.4]{LuVi97}. By definition of~$E'$ as the $\Gamma$-lower limit of $\{E_\varepsilon\}_{\varepsilon}$, there exists a sequence $\{(\ue, \gae)\}_{\varepsilon}$ in $\W \times \M$ with $\norm{\ue}_{L^{\infty}(a,b)} \leq K$ and $\gae= g_{\varepsilon}\mathcal{L}^1$ such that $\ue \to u$ in $L^1(a,b)$, $\gae \to \gamma$
in the flat norm and $\liminf_{\varepsilon \to 0} E_{\varepsilon}(\ue, \gae,I) = E^{\prime}(u,\gamma,I)$. Without loss of generality, we may also assume pointwise convergence $u_\varepsilon \to u$ a.e.~in $(a,b)$. For an arbitrary $\eta \in (0,(\beta-\alpha)/2)$ we then pick points $\alpha_\eta \in (\alpha,\alpha + \eta)$ and $\beta_\eta \in (\beta-\eta,\beta)$ such that on the one hand
\begin{equation}
\label{eqn_localization_1}
 u_\varepsilon(\alpha_\eta) \to u(\alpha_\eta) \quad \text{and} \quad  u_\varepsilon(\beta_\eta) \to u(\beta_\eta) \quad \text{as }  \varepsilon \to 0, 
\end{equation}
and on the other hand
\begin{equation}
\label{eqn_localization_2}
 |u(\alpha_\eta) - u(\alpha+)| + |u(\beta_\eta) - u(\beta-)| < \eta. 
\end{equation}
For $I_\eta  \coloneqq  (\alpha_\eta, \beta_\eta) \subset (\alpha,\beta)$ we now consider the functions $(u^{I_\eta},\gamma^{I_\eta})$ and the sequence $\{(u^{I_\eta}_\varepsilon,\gamma^{I_\eta}_\varepsilon)\}_\varepsilon$ in $\W \times \M$ defined analogously to $(u^I,\gamma^I)$. The condition~\eqref{eqn_localization_1} guarantees $ u^{I_\eta}_\varepsilon \to u^{I_\eta}$ in $L^1(a,b)$, while $\gamma^{I_\eta}_\varepsilon \to \gamma^{I_\eta}$ in the flat norm is trivially satisfied. Moreover, we notice $E_\varepsilon(u^{I_\eta}_\varepsilon,\gamma^{I_\eta}_\varepsilon)= E_\varepsilon(u^{I_\eta}_\varepsilon,\gamma^{I_\eta}_\varepsilon, I) $ for all $\varepsilon < \min\{\alpha_\eta-\alpha,\beta-\beta_\eta\}$. Therefore, we conclude from the definition of~$E_\varepsilon$ that 
\begin{align*}
 E^{\prime}(u^{I_\eta},\gamma^{I_\eta}) \leq \liminf_{\varepsilon \to 0} E_\varepsilon(u^{I_\eta}_\varepsilon,\gamma^{I_\eta}_\varepsilon) & = \liminf_{\varepsilon \to 0} E_\varepsilon(u^{I_\eta}_\varepsilon,\gamma^{I_\eta}_\varepsilon, I)  \\
 & \leq \liminf_{\varepsilon \to 0} E_{\varepsilon}(\ue, \gae,I) = E^{\prime}(u,\gamma,I).
\end{align*}
We next observe $u^{I_\eta} \to u^I$ in $L^1(a,b)$ and $\gamma^{I_\eta} \to \gamma^I$ in the flat norm as $\eta \to 0$, from~\eqref{eqn_localization_2}, respectively, Lemma~\ref{Lemma_weak_negativ_flat} since $\gamma^{I_\eta} \overset{*}{\rightharpoonup} \gamma^I$ in $\M$ by dominated convergence (as we have pointwise convergence $\1_{I_\eta} \to \1_I$ on~$(a,b)$). By the lower semicontinuity of~$E^\prime$ we then arrive at the claim 
\begin{equation*}
 E^{\prime}(u^{I},\gamma^{I}) \leq \liminf_{\eta \to 0} E^{\prime}(u^{I_\eta},\gamma^{I_\eta}) \leq  E^{\prime}(u,\gamma,I). \qedhere
\end{equation*}
\end{proof}

Now, we finally turn to the estimate from below for the volume and the Cantor terms.

\begin{Prop}\label{VPEP}
Let $A$ be an open subset of $(a,b)$. For every $(u,\gamma) \in  BV(a,b) \times \mathcal{M}(a,b)$  with $\norm{u}_{L^{\infty}(a,b)} \leq K$, $\gamma = D^su+ g \mathcal{L}^1$ and $u^{\prime}- g \in L^2(a,b)$ we have
\begin{equation*}
E^{\prime}(u,\gamma,A) \geq \int_A {|u^{\prime} -g|^2 \dd x}  +c_0 \int_A {|g| \dd x} +c_0 |D^c u|(A) .  
\end{equation*}
\end{Prop}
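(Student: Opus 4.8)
The plan is to establish the estimate first for open intervals $A=I\subset(a,b)$ and then to pass to general open sets. Since the right-hand side $\lambda(A):=\int_A|u'-g|^2\dd x+c_0\int_A|g|\dd x+c_0|D^cu|(A)$ defines a finite positive Borel measure on $(a,b)$, this last passage is exactly Remark~\ref{remark_properties_lower_limit}(ii). Fix $I$. By Lemma~\ref{cutoff} it suffices to show $E'(u^I,\gamma^I)\ge\lambda(I)$; note that $(u^I)'=u'\1_I$, $D^cu^I=D^cu\mrs I$, $g^I=g\1_I$, $J_{u^I}=J_u\cap I$, and that $\gamma^I$ is supported in $\overline I$. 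Pick a sequence $\{(\ue,\gae)\}_\varepsilon$ admissible for $E'(u^I,\gamma^I)$ and, passing to a subsequence realising $E'(u^I,\gamma^I)$ as a true limit, with $\sup_\varepsilon E_\varepsilon(\ue,\gae)<\infty$ (if this is impossible the claim is trivial, $\lambda(I)$ being finite). We may additionally apply the truncation of Lemma~\ref{LVtrunE}, which secures a uniform $L^\infty$-bound on $\ue$ while keeping $\gae\to\gamma^I$ in the flat norm and $E_\varepsilon(\ue,\gae)$ unchanged at the level of the $\liminf$ (in fact finiteness of the energy already forces $\norm{\ue}_{L^\infty}\le K$, which is all we use). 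Because the $\liminf$ is superadditive, it then suffices to prove the two separate bounds
\begin{equation*}
\liminf_{\varepsilon\to0}\int_a^b|\aue-\ggae|^2\dd x\ \ge\ \int_I|u'-g|^2\dd x,\qquad
\liminf_{\varepsilon\to0}\frac1\varepsilon\int_a^bf\Big(\varepsilon\dashint_{I_\varepsilon(x)\cap(a,b)}|\ggae|\dd t\Big)\dd x\ \ge\ c_0\int_I|g|\dd x+c_0|D^cu|(I).
\end{equation*}

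The volume bound is soft. By boundedness of $\{\aue-\ggae\}_\varepsilon$ in $L^2(a,b)$ a subsequence converges weakly in $L^2$ to some $w$; since $D\ue-\gae=(\aue-\ggae)\mathcal L^1\to Du^I-\gamma^I=((u^I)'-g^I)\mathcal L^1$ in $\mathscr D'(a,b)$ — using $\ue\to u^I$ in $L^1$ and $\gae\to\gamma^I$ in the flat, hence distributional, sense — one identifies $w=(u^I)'-g^I$, and weak lower semicontinuity of $v\mapsto\int|v|^2$ gives the claim, because $(u^I)'-g^I$ vanishes off $I$.

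The nonlocal bound is the crux. The mean-value/grid construction behind Lemma~\ref{app} and \eqref{eq_f_integral_mv} (applied on $(a,b)$ to $\ggae$) produces, for a suitable $2\varepsilon$-grid with good part $\Gee$,
\begin{equation*}
\frac1\varepsilon\int_a^bf\Big(\varepsilon\dashint_{I_\varepsilon(x)\cap(a,b)}|\ggae|\dd t\Big)\dd x\ \ge\ c_0\int_{U_\varepsilon}|\ggae|\dd x+2c_0\,\#(\Ge\setminus\Gee),
\end{equation*}
where $U_\varepsilon:=\bigcup\{I_\varepsilon(x_\alpha):x_\alpha\in\Gee\}$ is open, its complement $V_\varepsilon:=(a,b)\setminus U_\varepsilon$ is a union of $\#(\Ge\setminus\Gee)+2$ intervals of length $<2\varepsilon$, and $\#(\Ge\setminus\Gee)\le C_0/c_0$ is bounded independently of $\varepsilon$ by Lemma~\ref{app} and the energy bound. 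If $\int_{U_\varepsilon}|\ggae|\dd x\to\infty$ the bound is trivial; otherwise pass to a further subsequence along which this quantity is bounded, $\#(\Ge\setminus\Gee)$ is constant, and the $O(1)$ intervals forming $V_\varepsilon$ shrink to points. On each constituent interval $(p,q)$ of $V_\varepsilon$ write $\gae\mrs(p,q)=(\ggae-\aue)\1_{(p,q)}\mathcal L^1+\aue\1_{(p,q)}\mathcal L^1$: the first term tends to $0$ in $L^1$, hence in the flat norm, by Cauchy--Schwarz and $|q-p|=O(\varepsilon)$, while $\aue\1_{(p,q)}\mathcal L^1=D(\ue\1_{(p,q)})+\ue(q)\delta_q-\ue(p)\delta_p$ converges in the flat norm to a point mass, since $\ue\1_{(p,q)}\to0$ in every $L^r(a,b)$, $r<\infty$ (it is $L^\infty$-bounded with shrinking support, so $D(\ue\1_{(p,q)})\to0$ in $W^{-1,r}$ by \eqref{eq:neg_Sobolev_trivial_estimate} and Lemma~\ref{Lemma_weak_negativ_flat}(i)), and $\ue$ is uniformly bounded at the endpoints while the Dirac masses localise. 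Hence $\gae\mrs V_\varepsilon\to\nu$ in the flat norm for a finite atomic measure $\nu$, so $\ggae\1_{U_\varepsilon}\mathcal L^1=\gae-\gae\mrs V_\varepsilon\to\gamma^I-\nu$ in the flat norm with uniformly bounded total variation; by Lemma~\ref{Lemma_weak_negativ_flat}(iii) this convergence is weak-$\ast$, so lower semicontinuity of the total variation yields $\liminf_\varepsilon\int_{U_\varepsilon}|\ggae|\dd x\ge|\gamma^I-\nu|((a,b))$. Finally $\gamma^I-\nu=D^cu\mrs I+(g\1_I)\mathcal L^1+(D^ju\mrs I-\nu)$ is a sum of three mutually singular measures (Cantor, absolutely continuous, purely atomic), whence $|\gamma^I-\nu|((a,b))=|D^cu|(I)+\int_I|g|\dd x+|D^ju\mrs I-\nu|((a,b))\ge|D^cu|(I)+\int_I|g|\dd x$. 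Multiplying by $c_0$ completes the nonlocal bound, and combining the two bounds gives $E'(u^I,\gamma^I)\ge\lambda(I)$.

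The main obstacle is exactly this treatment of the bad set $V_\varepsilon$: because the fields $\ggae$ and $\aue$ are coupled only by the regularity $\aue-\ggae\in L^2$ and by no pointwise or functional relation, the mass $\int_{V_\varepsilon}|\ggae|$ need not stay bounded, and the argument hinges on the facts that (i) the number of bad grid points is controlled by the energy via Lemma~\ref{app}, so $V_\varepsilon$ collapses onto a finite set, and (ii) the uniform $L^\infty$-bound on $\ue$ (the reason Lemma~\ref{LVtrunE} is invoked, following Lussardi--Vitali) forces the mass escaping into $V_\varepsilon$ to contribute, in the limit, only a purely atomic measure, which is harmless since $|D^cu|$ and $|g|\mathcal L^1$ charge no point. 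The remaining ingredients — the volume estimate and the passages through Lemma~\ref{cutoff} and Remark~\ref{remark_properties_lower_limit}(ii) — are routine.
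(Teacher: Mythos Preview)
Your argument is correct and takes a genuinely different route from the paper.  The paper's proof proceeds in two stages: in Step~1 it invokes Lemma~\ref{LVtrunE} in full strength to get $\limsup_\varepsilon\|\ue-u\|_{L^\infty}\le\sigma:=\sup_{J_u}|[u]|$, then builds an auxiliary $\tilde v_\varepsilon\in SBV$ (equal to $\ue$ on $U_\varepsilon$ and to integral averages on the bad intervals) whose jump heights are bounded by $3(\sigma+\eta)$; this produces the preliminary estimate $E'(u,\gamma)(1+3\sigma)\ge\lambda((a,b))$, and Step~2 removes the spurious factor by chopping at large jumps and sending the threshold to~$0$.  You bypass both the auxiliary $\tilde v_\varepsilon$ and the $(1+3\sigma)$ factor by observing that the mass escaping into $V_\varepsilon$ can only produce a \emph{purely atomic} limit~$\nu$ (via the decomposition $\ggae\1_{V_\varepsilon}=(\ggae-\aue)\1_{V_\varepsilon}+\aue\1_{V_\varepsilon}$ and the uniform $L^\infty$ bound on $\ue$, which is automatic from finiteness of the energy), and atomic measures are singular to both $D^cu$ and $g\mathcal L^1$; hence the weak-$\ast$ lower semicontinuity of the total variation already delivers the sharp bound without any $\sigma$-dependent prefactor.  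One cosmetic slip: the connected components of $V_\varepsilon$ need not individually have length $<2\varepsilon$ (consecutive bad grid points merge), but each still has length $O(\varepsilon)$, so nothing changes.

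What each approach buys: yours is shorter, uses Lemma~\ref{LVtrunE} only trivially (indeed not at all, as you note), and avoids the two-step localization.  The paper's approach, on the other hand, packages the bad set into the singular part of an auxiliary $SBV$ function and tracks its total variation rather than its support structure; this is heavier here but is designed with the higher-dimensional sequel in mind, where the ``atomic limit'' argument has no direct analogue (the bad set need not collapse to points), whereas the construction of an $SBV$ modification with controlled jump set and the $\sigma$-localization generalize more readily.
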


\begin{proof}
\emph{Step 1: With $\sigma \coloneqq  \sup_{x \in J_u} |u(x+)-u(x-)|$, there holds the preliminary estimate}
\begin{equation}
E^{\prime}(u,\gamma) \left( 1 + 3 \sigma \right) \geq \int_a^b |u^{\prime} - g|^2 \dd x   +c_0 \, \int_a^b {|g| } \dd x + c_0|D^cu|(a,b). \label{alstepoo}
\end{equation} 
By definition of~$E'$ as the $\Gamma$-lower limit of $\{E_\varepsilon\}_{\varepsilon}$, there exists a sequence $\{(\ue, \gae)\}_{\varepsilon}$ in $\W \times \M$ with $\norm{\ue}_{L^{\infty}(a,b)} \leq K$ and $\gae= g_{\varepsilon}\mathcal{L}^1$ for some $g_{\varepsilon} \in L^1(a,b)$ for every $\varepsilon >0$ such that $\ue \to u$ in $L^1(a,b)$, $\gae \to \gamma$
in the flat norm and $\liminf_{\varepsilon \to 0} E_{\varepsilon}(\ue, \gae) =E^{\prime}(u,\gamma)$. After assuming without loss of generality $E^{\prime}(u,\gamma) < \infty$ and passing  to a subsequence (not relabeled) and a possible modification of the sequence via Lemma~\ref{LVtrunE} we may further suppose 
\begin{equation}
\lim_{\varepsilon \to 0} E_{\varepsilon}(\ue, \gae) =E^{\prime}(u,\gamma), \quad \text{ in particular } \quad E_{\varepsilon}({u}_{\varepsilon}, {\gamma}_{\varepsilon}) \leq C_0 \quad \text{for all } \varepsilon\label{zusatzschluss}
\end{equation}
for a positive constant~$C_0$ as well as 
\begin{equation*}
\limsup_{\varepsilon \to 0} \norm{u_{\varepsilon} - u}_{L^{\infty}(a,b)} \leq \sigma. 
\end{equation*}
Let $\eta > 0$ be fixed. We may assume that $\norm{\ue - u}_{L^{\infty}(a,b)} \leq \sigma + \eta$ holds for all $\varepsilon$. Analogously as in the proof of Lemma~\ref{LVtrunE} (cf.~\eqref{estimatediameter} and~\eqref{smallabscont_cantor}), there exists $\delta_\eta > 0$ such that $|u(x) - u(y)| < \sigma + \eta$ for all $x,y \in (a,b)$ with $|x-y| < \delta_\eta$. Thus, there holds 
\begin{equation}
x, y \in (a,b) \text{ with }|x-y| < \delta_{\eta} \quad \Rightarrow \quad |\ue(x)-\ue(y)| < 3(\sigma + \eta) \label{estimatediameter_u_eps}
\end{equation}
for all such $\varepsilon$. Next, we apply Lemma~\ref{app} with $u=\ue$ and $\gamma=\gae$. In this way we find a uniform grid~$\Ge$ in the interval $(a,b)$ with grid size~$2 \varepsilon$ such that
\begin{equation}
\int_{\bigcup \{I_\varepsilon(x_\alpha) \colon x_\alpha \in \Gee\}} |g_\varepsilon|\diff t + 2\# (\Ge \setminus \Gee) \\
 \leq \frac{1}{c_0 \varepsilon} \int_a^b f \bigg(\varepsilon \dashint_{I_\varepsilon(x)\cap (a,b)}{ |g_\varepsilon|\diff t } \bigg) \dd x . \label{constructedsequencepropa}
\end{equation}
Let $a_{\ast}= \min \Ge - \varepsilon$ and $b_{\ast}=\mathtoolsset{showonlyrefs=true} \max \Ge +\varepsilon$. We then consider a sequence $\{\tilde{v}_{\varepsilon}\}_\varepsilon$ of functions in $L^\infty(a,b)$, which is defined a.e.~in $(a_\ast,b_\ast)$ by 
\begin{align}
\tilde{v}_{\varepsilon}(x): = \left\{
\begin{array}{l l}
\ue(x) & \quad x \in \bigcup \{I_\varepsilon(x_\alpha) \colon x_\alpha \in \Gee\},\\
\dashint_{I_\varepsilon(x_\alpha)} \ue(z) \dd z & \quad x \in I_\varepsilon(x_\alpha) \text{ for some } x_\alpha \in  \Ge \setminus \Gee,
\end{array} \label{barvepsilon}
\right.
\end{align} 
and then extended to $(a,b)$ by the constant values $\tilde{v}_{\varepsilon}(a_{\ast}^+)$ and $\tilde{v}_{\varepsilon}(b_{\ast}^-)$, respectively, for all~$\varepsilon$. As~$\tilde{v}_{\varepsilon}$ is bounded with $\norm{\tilde{v}_{\varepsilon}}_{L^{\infty}(a,b)} \leq K$ and coincides with~$\ue$ on the set $\bigcup \{I_\varepsilon(x_\alpha) \colon x_\alpha \in \Gee\}$, where 
\begin{align}\label{kleinerRest}
  |(a,b) \setminus \bigcup \{I_\varepsilon(x_\alpha) \colon x_\alpha \in \Gee\}| \leq 2 
\varepsilon  (2 + \#( \Ge \setminus \Gee)) \leq 4 \varepsilon + \varepsilon \frac{C_0}{c_0}
\end{align}
by~\eqref{zusatzschluss} and~\eqref{constructedsequencepropa}, we observe $\tilde{v}_{\varepsilon} \to u$ in $L^q(a,b)$ for all $1 \leq q < \infty$. Moreover, we have ${\tilde{v}_{\varepsilon} \in SBV(a,b)}$ with 
\begin{equation*}
|[\tilde{v}_{\varepsilon}](x)| \leq 3(\sigma + \eta) \quad \text{for all } x \in J_{\tilde{v}_{\varepsilon}},
\end{equation*}
provided that $\varepsilon$ is sufficiently small, i.e., $4 \varepsilon < \delta_\eta$ (such that~\eqref{estimatediameter_u_eps} is satisfied). Since the number of jumps of~$\tilde{v}_{\varepsilon}$ is bounded via~\eqref{constructedsequencepropa} and the definition of~$E_\varepsilon$ by
\begin{equation*}
\# J_{\tilde{v}_{\varepsilon}} \leq 2 \# (\Ge \setminus \Gee) \leq \frac{1}{c_0} E_{\varepsilon}(\ue,\gae), 
\end{equation*}
we end up with the estimate  
\begin{equation}
|D^s\tilde{v}_{\varepsilon}|(a,b) \leq \frac{ 3(\sigma + \eta) }{c_0} E_{\varepsilon}(\ue,\gae) \label{constructedsequenceprop} 
\end{equation}
for the size of $D^s\tilde{v}_{\varepsilon}$ in terms of the energy~$E_{\varepsilon}(\ue,\gae)$. Next, we introduce a sequence $\{\tgae\}_\varepsilon$ of measures in~$\M$, by setting $\tgae \coloneqq  \tilde{g}_{\varepsilon}\mathcal{L}^1$ with $g_\varepsilon \in L^1(a,b)$ defined as 
\begin{align}
\tilde{g}_{\varepsilon}(x)  \coloneqq  \left\{
\begin{array}{l l}
g_{\varepsilon}(x) & \quad x \in \bigcup \{I_\varepsilon(x_\alpha) \colon x_\alpha \in \Gee\},\\
0 & \quad \text{otherwise},
\end{array} \label{bargammadensity}
\right.
\end{align}
for all~$\varepsilon$. In order to show that $\{\tgae +  D^s\tilde{v}_{\varepsilon}\}_{\varepsilon}$ is an approximating sequence of~$\gamma$, we first notice from the definition of~$\tilde{v}_{\varepsilon}$ and $\tgae= \tilde{g}_{\varepsilon}\mathcal{L}^1$ in~\eqref{barvepsilon} and~\eqref{bargammadensity}, with $\tilde{v}_{\varepsilon}' = \ue'$ on $\bigcup \{I_\varepsilon(x_\alpha) \colon x_\alpha \in \Gee\}$ and $\tilde{v}_{\varepsilon}'=0$ on the remaining set of $(a,b)$, that
\begin{align*}
\norm{\gae - (\tgae + D^s\tilde{v}_{\varepsilon})}_{\text{flat}} 
 & = \norm{ g_\varepsilon \1_{(a,b) \setminus \bigcup \{I_\varepsilon(x_\alpha) \colon x_\alpha \in \Gee\}}  \mathcal{L}^1 - D^s\tilde{v}_{\varepsilon}}_{\text{flat}} \\
 &= \norm{(g_\varepsilon - \ue' ) \1_{(a,b) \setminus \bigcup \{I_\varepsilon(x_\alpha) \colon x_\alpha \in \Gee\}}\mathcal{L}^1 +\ue'  \mathcal{L}^1 -\tilde{v}_\varepsilon' \mathcal{L}^1 - D^s\tilde{v}_{\varepsilon}}_{\text{flat}}\\
 &\leq \norm{(g_\varepsilon - \ue' ) \1_{(a,b) \setminus \bigcup \{I_\varepsilon(x_\alpha) \colon x_\alpha \in \Gee\}}\mathcal{L}^1}_{\text{flat}}  + \norm{D \ue  - D\tilde{v}_{\varepsilon}}_{\text{flat}}.
\end{align*}
With~\eqref{eq:flat_trivial_estimate} and the Cauchy--Schwarz inequality we can then continue to estimate
\begin{align*}
& \norm{\gae - (\tgae + D^s\tilde{v}_{\varepsilon})}_{\text{flat}} \\
 & \leq \norm{(g_\varepsilon - \ue' ) \1_{(a,b) \setminus \bigcup \{I_\varepsilon(x_\alpha) \colon x_\alpha \in \Gee\}}}_{L^1(a,b)}  + \norm{\ue  - \tilde{v}_{\varepsilon}}_{L^{1}(a,b)} \\
 & \leq  \norm{g_{\varepsilon} - u_{\varepsilon}'}_{L^2(a,b)} |(a,b) \setminus \bigcup \{I_\varepsilon(x_\alpha) \colon x_\alpha \in \Gee\}|^{\frac{1}{2}} + \norm{\ue  - \tilde{v}_{\varepsilon}}_{L^{1}(a,b)}.
\end{align*}
We now study the terms on the right-hand side. From~\eqref{zusatzschluss} and the definition of~$E_\varepsilon$ we notice that $\{u_{\varepsilon}' -g_{\varepsilon}\}_{\varepsilon}$ is a bounded sequence in $L^2(a,b)$. Together with~\eqref{kleinerRest} and taking into account also the strong convergences $u_\varepsilon \to u$ and $\tilde{v}_{\varepsilon} \to u$ in $L^1(a,b)$, we then arrive at 
\begin{equation*}
 \norm{\gae - (\tgae + D^s\tilde{v}_{\varepsilon})}_{\text{flat}} \to 0 \quad \text{as } \varepsilon \to 0.
\end{equation*}
With $\gae \to \gamma = D^su + g \mathcal{L}^1 $ in the flat norm and Lemma~\ref{Lemma_weak_negativ_flat}, we then conclude 
\begin{equation}
\tgae + D^s\tilde{v}_{\varepsilon} \to \gamma  \quad \text{in the flat norm} \quad \text{and} \quad \tgae + D^s\tilde{v}_{\varepsilon}
\stackrel{\ast}{\rightharpoonup} \gamma  \quad \text{in } \M.
\label{equation_convergence_measure_gamma_modified} 
\end{equation}
For the latter conclusion we have also used the fact that $\{|\tgae + D^s\tilde{v}_{\varepsilon}|(a,b)\}_\varepsilon$ with 
\begin{equation*}
|\tgae + D^s\tilde{v}_{\varepsilon}|(a,b) = \int_a^b |\tilde{g}_{\varepsilon}| \dd x+ |D^s\tilde{v}_{\varepsilon}|(a,b) \quad \text{for every } \varepsilon > 0
\end{equation*}
is a bounded sequence, which is a consequence of the boundedness of $\{\tilde{g}_\varepsilon\}_\varepsilon$ in $L^1(a,b)$ via~\eqref{constructedsequencepropa} and the estimate~\eqref{constructedsequenceprop} (recall also the bound~\eqref{zusatzschluss} on the energies). 

After having discussed the convergence properties of the sequence $\{(\tilde{v}_\varepsilon,\tgae)\}_{\varepsilon}$, we can finally turn to the proof of the estimate~\eqref{alstepoo}. From the definition of~$E_\varepsilon$ we obtain via~\eqref{constructedsequencepropa} and~\eqref{constructedsequenceprop}
\begin{align*}
& E_{\varepsilon}(\ue,\gae)\left( 1 + 3 (\sigma + \eta)\right) \\
  &\geq  \int_a^b |\ue'-g_{\varepsilon}|^2 \dd x + c_0\int_{\bigcup \{I_\varepsilon(x_\alpha) \colon x_\alpha \in \Gee\}} |g_\varepsilon|\dd x  + c_0 |D^s\tilde{v}_{\varepsilon}|(a,b)\\
 &=  \int_a^b |\ue'-g_{\varepsilon}|^2 \dd x + c_0 \int_a^b |\tilde{g}_{\varepsilon}| \dd x+ c_0 |D^s\tilde{v}_{\varepsilon}|(a,b).
\end{align*}
By the choice of the sequence $\{(u_\varepsilon,\gamma_\varepsilon)\}_\varepsilon$ with~\eqref{zusatzschluss} it follows that 
\begin{align*}
& E'(u,\gamma) \left( 1 + 3 (\sigma + \eta)\right) \\
& \geq  \liminf_{\varepsilon \to 0} \int_a^b |\ue'- g_{\varepsilon}|^2 \dd x  + \liminf_{\varepsilon \to 0}\left[  c_0 \int_a^b |\tilde{g}_{\varepsilon}| \dd x+ c_0 |D^s\tilde{v}_{\varepsilon}|(a,b )\right] \\
 &   \geq \int_a^b |u^{\prime} - g|^2 \dd x +c_0 \, \int_a^b {|g| \dd x } + c_0|D^s u|(a,b)  \\
 & \geq \int_a^b |u^{\prime} - g|^2 \dd x  +c_0 \, \int_a^b {|g| \dd x }+ c_0|D^cu|(a,b).
\end{align*}
Let us comment on the second-last inequality. For the first term we first deduce from the boundedness of the sequence $\{u_{\varepsilon}' - g_{\varepsilon}\}_{\varepsilon}$ in $L^2(a,b)$ combined with the convergences $u_\varepsilon' \mathcal{L}^1 \to Du$ and $\gamma_\varepsilon \to \gamma =  D^su + g \mathcal{L}^1$ in the flat norm that $\ue^{\prime}- g_{\varepsilon} \rightharpoonup u^{\prime} - g$ in $L^2(a,b)$ and then employ the lower semicontinuity of the $L^2$-norm with respect to weak convergence in $L^2(a,b)$. For the second and third term we use the weak-$\ast$ convergence $\tgae + D^s\tilde{v}_{\varepsilon} \stackrel{\ast}{\rightharpoonup} \gamma=  D^su + g \mathcal{L}^1$ in~$\M$ from~\eqref{equation_convergence_measure_gamma_modified} and the lower semicontinuity of the total variation with respect to weak-$\ast$ convergence. By the arbitrariness of~$\eta>0$ we conclude from the previous inequality the desired estimate~\eqref{alstepoo}.

\emph{Step 2: Localization.} We fix an arbitrary $\bar{\sigma} >0$ and consider the finite set of points $\{ x_1, \ldots,x_{N-1}\} \subset J_u$ such that $|[u](x_i)| > \bar{\sigma}$ for $i=1,\dots, N-1$. Let $x_0 = a$ and $x_N=b$. Then we have 
\begin{equation*}
\sup_{x \in J_u \cap (x_i,x_{i+1})} |[u](x)| \leq \bar{\sigma} \quad \text{for every } i \in \{0, \dots, N-1\}.
\end{equation*}
For every open subinterval $(\alpha, \beta)$ of $(a,b)$ we consider pairs $(u^{(\alpha,\beta)},\gamma^{(\alpha,\beta)}) \in BV(a,b) \times \M$ defined as in Lemma~\ref{cutoff} as
\begin{align*}
u^{(\alpha, \beta)}(x) = \left\{
\begin{array}{l l}
u(\alpha +) & \quad x \in  (a, \alpha),\\
u(x) & \quad x \in [\alpha,\beta], \\
u(\beta -) & \quad x \in  (\beta, b),
\end{array}
\right.
\end{align*} and $\gamma^{(\alpha, \beta)} = \gamma \mrs (\alpha, \beta).$ 
By Lemma~\ref{cutoff} (with $I=(x_i, x_{i+1})$) and by Step 1 (applied with $u = u^{(x_i,x_{i+1})}$ and $\gamma=\gamma^{(x_i,x_{i+1})})$, we obtain
\begin{align*}
E^{\prime}(u,\gamma, (x_i,x_{i+1})) ( 1 + 3 \bar{\sigma}) 
 & \geq E^{\prime}(u^{(x_i,x_{i+1})},\gamma^{(x_i,x_{i+1})}) ( 1 + 3 \bar{\sigma})\\
 & \geq \int_{x_i}^{x_{i+1}}|u^{\prime} - g|^2 \dd x + c_0 \int_{x_i}^{x_{i+1}} |g| \dd x + c_0|D^cu|(x_i,x_{i+1}) 
\end{align*}
for every $i \in \{0, \dots, N-1\}$. With the superadditivity of $A \mapsto E'(u,\gamma,A)$ from  Remark~\ref{remark_properties_lower_limit}~(i) we then deduce 
\begin{equation*}
 E^{\prime}(u,\gamma)   \left( 1 + 3 \bar{\sigma}\right)   \geq \int_{a}^b|u^{\prime} - g|^2 \dd x + c_0 \int_{a}^{b}  |g| \dd x + c_0|D^cu|(a,b),
\end{equation*}
which, by the arbitrariness of $\bar{\sigma} >0$, leads to
\begin{align*}
E^{\prime}(u,\gamma) &  \geq \int_a^b |u^{\prime} - g|^2 \dd x  +c_0  \int_a^b {|g|  \dd x} + c_0|D^cu|(a,b).
\end{align*}
Applying this estimate to $(u^I,\gamma^I)$, from Lemma~\ref{cutoff} we infer that
\begin{align*}
E^{\prime}(u,\gamma,I) \geq E^{\prime}(u^I,\gamma^I) \geq \lambda(I) 
\end{align*}
for every open interval $I \subset (a,b)$, where $\lambda$ denotes the positive Borel measure on $(a,b)$ that is given by 
\begin{equation*}
\lambda(B) \coloneqq  \int_B |u^{\prime} - g|^2  \dd x +c_0 \, \int_B {|g|} \dd x + c_0|D^cu|(B)
\end{equation*}
for every Borel subset $B$ of $(a,b)$. Therefore, the claim of the proposition follows from Remark~\ref{remark_properties_lower_limit}~(ii).
\end{proof}


\subsection{Conclusion and proof of the {\boldmath\texorpdfstring{$\Gamma$}{Gamma}-\boldmath\texorpdfstring{$\liminf$}{liminf} inequalities}}

For $(u,\gamma) \in  BV(a,b) \times \mathcal{M}(a,b)$  with $\norm{u}_{L^{\infty}(a,b)} \leq K$, $\gamma = D^su+ g \mathcal{L}^1$ and $u^{\prime}- g \in L^2(a,b)$ we have proved so far in Propositions~\ref{JPEP} and~\ref{VPEP} the following lower bounds for the volume, the Cantor and the jump part:
\begin{enumerate}
\item $E^{\prime}(u,\gamma,A) \geq \int_A |u^{\prime} - g|^2  \dd x+ c_0 \int_A |g| \dd x,$
\item $E^{\prime}(u,\gamma,A) \geq c_0  |D^cu|(A), $
\item $E^{\prime}(u,\gamma,A) \geq 2 \sum_{x \in J_u \cap A}  f  \left( \frac{1}{2} |[u](x)|  \right),$
\end{enumerate}
for every open subset $A$ of $(a,b)$. These are now combined to prove the estimate from below of the $\Gamma$-lower limit which shows the first part of Theorem~\ref{mainresult}.
 
\begin{theorem}\label{generalliminf}
For every $(u,\gamma) \in  BV(a,b) \times \mathcal{M}(a,b)$  with $\norm{u}_{L^{\infty}(a,b)} \leq K$, $\gamma = D^su+ g \mathcal{L}^1$ and $u^{\prime}- g \in L^2(a,b)$ we have
\begin{equation*}
E^{\prime}(u,\gamma) \geq E(u, \gamma).  
\end{equation*}
\end{theorem}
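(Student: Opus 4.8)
The plan is to combine the three lower bounds established in Propositions~\ref{JPEP} and~\ref{VPEP} by a standard measure-theoretic superposition argument, exploiting that their right-hand sides are mutually singular positive measures and that $A \mapsto E^{\prime}(u,\gamma,A)$ is superadditive on disjoint open subsets of $(a,b)$ by Remark~\ref{remark_properties_lower_limit}~(i).

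First I would introduce, for Borel sets $B \subset (a,b)$, the three positive measures
\[ \mu_1(B) \coloneqq \int_B \big( |u^{\prime} - g|^2 + c_0 |g| \big) \dd x, \qquad \mu_2(B) \coloneqq c_0 |D^c u|(B), \qquad \mu_3(B) \coloneqq 2 \!\! \sum_{x \in J_u \cap B} \!\! f\big(\tfrac{1}{2} |[u](x)|\big), \]
noting that $\mu_1,\mu_2,\mu_3$ are finite (for $\mu_3$ use $f \le c_0 \min\{1,\cdot\}$ together with $\sum_{x \in J_u} |[u](x)| = |D^j u|(a,b) \le |D^s u|(a,b) < \infty$) and that they are pairwise mutually singular: $\mu_1 \ll \mathcal{L}^1$, $\mu_2$ is concentrated on the $\mathcal{L}^1$-negligible, non-atomic set carrying the Cantor part, and $\mu_3$ is purely atomic and concentrated on the countable set $J_u$. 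The three bounds collected just before the statement of the theorem then say precisely that $E^{\prime}(u,\gamma,A) \ge \mu_i(A)$ for $i = 1,2,3$ and every open $A \subset (a,b)$.

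Next I would invoke the elementary fact that a superadditive set function on the open subsets of $(a,b)$ dominating each of finitely many mutually singular positive measures also dominates their sum; for completeness I would include the short argument. Given an open $A$ and $\delta > 0$, mutual singularity yields a Borel partition $(a,b) = F_1 \cup F_2 \cup F_3$ with $\mu_i$ concentrated on $F_i$; by inner regularity one selects pairwise disjoint compact sets $C_i \subset F_i \cap A$ with $\mu_i(A \setminus C_i) < \delta$, and surrounds them by pairwise disjoint open sets $A_i$ with $C_i \subset A_i \subset A$. Then superadditivity of $E^{\prime}(u,\gamma,\cdot)$ and the domination give
\[ E^{\prime}(u,\gamma,A) \ge \sum_{i=1}^{3} E^{\prime}(u,\gamma,A_i) \ge \sum_{i=1}^{3} \mu_i(C_i) \ge \sum_{i=1}^{3} \mu_i(A) - 3\delta, \]
so that, letting $\delta \to 0$, $E^{\prime}(u,\gamma,A) \ge (\mu_1 + \mu_2 + \mu_3)(A)$. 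Choosing $A = (a,b)$ and observing that under the present hypotheses on $(u,\gamma)$ the value $E(u,\gamma)$ equals exactly $(\mu_1 + \mu_2 + \mu_3)(a,b)$ by the definition of $E$ completes the proof.

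The only step that is not pure bookkeeping is the separation of the three measures, i.e.\ producing the pairwise disjoint open sets on which $\mu_1,\mu_2,\mu_3$ essentially concentrate; but this is immediate here from their very different supports — a set of positive Lebesgue measure, a non-atomic null set, and a countable set — so I expect no genuine obstacle in this concluding combination.
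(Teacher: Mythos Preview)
Your proposal is correct and takes essentially the same approach as the paper. The paper writes the three bounds as densities $\psi_1,\psi_2,\psi_3$ with respect to the common reference measure $\kappa=\mathcal{L}^1+\#(J_u\cap\cdot)+|D^cu|$ and then invokes \cite[Lemma~15.2]{br} to pass to $\int \sup_i\psi_i\,\dd\kappa$, while you give the self-contained inner-regularity/superadditivity argument for mutually singular measures that underlies that lemma; the mathematical content is identical.
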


\begin{proof}
We consider the Radon measure~$\kappa$ defined by
\begin{equation*}
\kappa(B) \coloneqq  \mathcal{L}^1(B) + \#(J_u \cap B) + \vert D^cu\vert (B)
\end{equation*}
for every Borel subset $B$ of $(a,b)$. Let $C$ be a Borel subset of $(a,b)\setminus J_u$ with $\vert C \vert =0$ such that $\vert D^cu\vert ((a,b)\setminus C)=0$. Then, we obtain
\begin{equation*}
 E^{\prime}(u,\gamma,A)\geq \int_A \psi_i \diff \kappa
\end{equation*}
for $i \in \{1,2,3\}$ and for every open set $A \subset (a,b)$, where
\begin{align*}
 \psi_1 &  \coloneqq  \big( |u^{\prime} - g|^2 + c_0 |g| \big) \1_{(a,b) \setminus (J_u  \cup C)}, \\
 \psi_2 &  \coloneqq  2 \ff{\tfrac{1}{2} |[u]|} \1_{J_u}, \\
 \psi_3 &  \coloneqq  c_0 \1_C. 
\end{align*}
Next, we define
\begin{align*}
\psi(x) \coloneqq \sup_i \psi_i(x) =  \left\{
\begin{array}{l l}
|u^{\prime}(x) -g(x)|^2 +c_0 |g(x)| & \quad \text{if } x \in (a,b) \setminus (J_u  \cup C),  \\ 
2 \, \ff{\frac{1}{2} |[u](x)|}  & \quad \text{if } x \in J_u, \\
c_0 & \quad \text{if } x \in C.
\end{array}
\right.
\end{align*}
By a measure theoretic result (see e.g.~\cite[Lemma 15.2]{br} applied with the set function $\mu(\cdot) \coloneqq E^{\prime}(u,\gamma,\cdot)$) we conclude that 
\begin{equation*}
E^{\prime}(u,\gamma,A) \geq \int_A \sup_i \psi_i \dd \kappa =\int_A \psi \diff \kappa = E(u,\gamma,A)
\end{equation*}
for every open subset $A$ of $(a,b)$. With $A=(a,b)$, this proves the theorem. 
\end{proof}

The $\Gamma$-$\liminf$ in the case $K = \infty$ is a direct consequence of Theorem~\ref{generalliminf}.  

\begin{corollary}
Let $K = \infty$. If $(u_\varepsilon, \gamma_\varepsilon) \to (u, \gamma)$ in $L^0((a, b);\overline{\R}) \times \M$, then 
\begin{equation*}
  \liminf_{\varepsilon \to 0} E_\varepsilon(u_\varepsilon, \gamma_\varepsilon) \ge E(u, \gamma). 
\end{equation*}
\end{corollary}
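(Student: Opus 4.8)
The plan is to reduce the statement, via localization, to the $\Gamma$-$\liminf$ inequality for $K<\infty$ already established in Theorem~\ref{generalliminf}, and to treat the jump energy that concentrates at the partition points separately by means of Remark~\ref{rmk:en-lb}. We may assume $\liminf_{\varepsilon\to0}E_\varepsilon(u_\varepsilon,\gamma_\varepsilon)<\infty$ (otherwise there is nothing to prove) and pass to a subsequence along which this $\liminf$ is a limit, so $E_\varepsilon(u_\varepsilon,\gamma_\varepsilon)\le C_0$ for all $\varepsilon$. Applying Theorem~\ref{com-K-infty} along a further subsequence, the limit $(u,\gamma)$ admits a representation $u=w+\sum_{i=1}^m\alpha_i\chi_{(x_{i-1},x_i)}$ with $w\in BV(a,b)$, $\alpha_i\in\{-\infty,0,+\infty\}$, and a partition $a=x_0<x_1<\dots<x_m=b$, with $u_\varepsilon\to w$ in $L^1_{\mathrm{loc}}$ on each $(x_{i-1},x_i)$, $\gamma_\varepsilon\mrs A\to\gamma\mrs A$ in the flat norm for every $A\Subset(a,b)\setminus\{x_1,\dots,x_{m-1}\}$, and $w'-g\in L^2(a,b)$; in particular $(u,\gamma)$ lies in the non-trivial branch of the definition of $E$ and $E(u,\gamma)<\infty$.

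\emph{Localization on the finite intervals.} Fix $i$ with $\alpha_i=0$ and an open interval $I\Subset(x_{i-1},x_i)$. By the proof of Theorem~\ref{com-K-infty} the sequence $\{u_\varepsilon\}_\varepsilon$ is bounded in $L^\infty$ on every $I'$ with $I\Subset I'\Subset(x_{i-1},x_i)$; choosing the endpoints of such an $I'$ among the full-measure set of points at which $u_\varepsilon$ converges to a finite value, I replace $u_\varepsilon$ by the function $\hat u_\varepsilon\in W^{1,1}(a,b)$ obtained from $u_\varepsilon|_{I'}$ by extending it to $(a,b)$ with the bounded endpoint values, and set $\hat\gamma_\varepsilon:=\gamma_\varepsilon\mrs I'$, $\hat g_\varepsilon:=g_\varepsilon\1_{I'}$. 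Then $\hat u_\varepsilon$ has a uniform $L^\infty$ bound $K_I<\infty$, $\hat u_\varepsilon'-\hat g_\varepsilon\in L^2(a,b)$, and $(\hat u_\varepsilon,\hat\gamma_\varepsilon)\to(\hat u,\hat\gamma)$ in the sense of the $K<\infty$ theory, where $\hat u=u$ on $I'$ and is constant outside and $\hat\gamma=\gamma\mrs I'$. Since $\hat g_\varepsilon$ vanishes outside $I'$, one has $E_\varepsilon(\hat u_\varepsilon,\hat\gamma_\varepsilon,I)=E_\varepsilon(u_\varepsilon,\gamma_\varepsilon,I)$ as soon as $\varepsilon<\operatorname{dist}(I,\partial I')$. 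Applying the localized $\Gamma$-$\liminf$ inequality contained in Theorem~\ref{generalliminf} (with $K=K_I$) to $(\hat u,\hat\gamma)$ on $I$ and using $\hat u=u$, $\hat\gamma=\gamma$ on $I$, I obtain
\[ \liminf_{\varepsilon\to0}E_\varepsilon(u_\varepsilon,\gamma_\varepsilon,I)\ \ge\ E(u,\gamma,I)\qquad\text{for every open }I\Subset(x_{i-1},x_i)\text{ with }\alpha_i=0. \]

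\emph{Conclusion.} For small $\rho>0$ put $B_\rho:=\big((a+\rho,b-\rho)\setminus\bigcup_{i=1}^{m-1}(x_i-\rho,x_i+\rho)\big)\cap\operatorname{int}\F(u)$, a finite disjoint union of intervals each compactly contained in some $(x_{i-1},x_i)$ with $\alpha_i=0$. By Remark~\ref{rmk:en-lb} and monotonicity of $A\mapsto E_\varepsilon(u_\varepsilon,\gamma_\varepsilon,A)$, for all small $\varepsilon$ one has $E_\varepsilon(u_\varepsilon,\gamma_\varepsilon)\ge 2c_0(m-1)+E_\varepsilon(u_\varepsilon,\gamma_\varepsilon,B_\rho)$, and since $E_\varepsilon(u_\varepsilon,\gamma_\varepsilon,\cdot)$ is additive on disjoint open sets the previous step yields
\[ \liminf_{\varepsilon\to0}E_\varepsilon(u_\varepsilon,\gamma_\varepsilon)\ \ge\ 2c_0(m-1)+E(u,\gamma,B_\rho). \]
Letting $\rho\to0$, the sets $B_\rho$ exhaust $\bigsqcup_{i:\alpha_i=0}(x_{i-1},x_i)$; since $\mathcal L^1$, $|g|\mathcal L^1$ and $|D^cw|$ give no mass to the finite set $\{x_1,\dots,x_{m-1}\}$ and $J_u$ meets $(a,b)$ only inside $\bigsqcup_{i:\alpha_i=0}(x_{i-1},x_i)$ or at the $x_j$, we get $E(u,\gamma,B_\rho)\uparrow E(u,\gamma)-2\sum_{x\in J_u\cap\{x_1,\dots,x_{m-1}\}}f(\tfrac12|[u](x)|)$. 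As $f\le c_0$ and $\#(J_u\cap\{x_1,\dots,x_{m-1}\})\le m-1$, this residual jump energy is at most $2c_0(m-1)$, and therefore $\liminf_{\varepsilon\to0}E_\varepsilon(u_\varepsilon,\gamma_\varepsilon)\ge E(u,\gamma)$.

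The main obstacle I anticipate is the localization step: the $K<\infty$ lower bound of Theorem~\ref{generalliminf} is stated for sequences converging on all of $(a,b)$, and the cut-off and extension construction above is what transfers it to a subinterval. Verifying that this construction leaves the localized functionals unchanged for small $\varepsilon$ and that the extended limit $(\hat u,\hat\gamma)$ still satisfies the structural constraints $\hat\gamma=D^s\hat u+\hat g\mathcal L^1$ and $\hat u'-\hat g\in L^2$ is the point requiring care; the bookkeeping of the jump energy at the partition points is comparatively routine once one observes that $E(u,\gamma)$ assigns at most $2c_0$ to each point of $J_u\cap\{x_1,\dots,x_{m-1}\}$.
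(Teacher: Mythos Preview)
Your proof is correct and follows essentially the same strategy as the paper's: split off $2c_0(m-1)$ via Remark~\ref{rmk:en-lb}, apply the $K<\infty$ $\Gamma$-$\liminf$ inequality of Theorem~\ref{generalliminf} on compactly contained subintervals of the partition, and exhaust as $\rho\to0$ by monotone convergence. You supply more detail than the paper's terse argument---the explicit extension $(\hat u_\varepsilon,\hat\gamma_\varepsilon)$, which plays the role of Lemma~\ref{cutoff}, and the bookkeeping showing that the residual jump energy at the partition points is absorbed by $2c_0(m-1)$---but the underlying idea is identical.
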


\begin{proof}
Assuming without loss of generality $E_\varepsilon(u_\varepsilon, \gamma_\varepsilon) < C_0$ for a positive constant~$C_0$, by Theorem~\ref{com-K-infty} we have that $u \in  BV_{\infty,\mathcal{P}}(a,b)$ and there is a partition $a = x_0 < x_1 \ldots < x_{m} = b$ such that $J_u \subset \{x_1, \ldots, x_{m-1}\} \cup \mathcal{F}(u)$ and for $A_\delta \coloneqq \mathcal{F}(u) \cap \bigcup_i (x_{i-1}+\delta, x_i-\delta)$, with $\delta > 0$, we have $\chi_{A_\delta} u_\varepsilon \to \chi_{A_\delta} u$ in $L^1(a,b)$ and $\gamma_\varepsilon \mrs A_\delta \to \gamma \mrs A_\delta$ in the flat norm. From Remark~\ref{rmk:en-lb} and Theorem~\ref{generalliminf} (applied for a suitable $K$) we then get 
\begin{equation*}
  \liminf_{\varepsilon \to 0} E_\varepsilon(u_\varepsilon, \gamma_\varepsilon) 
  \geq 2 c_0(m-1) + E(u,\gamma,A_\delta). 
\end{equation*}
The assertion follows in the limit $\delta \searrow 0$ from the monotone convergence theorem. 
\end{proof}


\section[{Estimate from above of the \texorpdfstring{$\Gamma$}{Gamma}-upper limit}]{Estimate from above of the \boldmath \texorpdfstring{$\Gamma$}{Gamma}-upper limit}\label{sec:upper-limit}
We now turn to the estimate from above of the $\Gamma$-upper limit $E^{\prime \prime}$. Except for the very last paragraph we assume $K < \infty$ in the whole section. We again restrict ourselves to pairs $(u, \gamma)\in BV(a,b) \times \M$ with $\norm{u}_{\LU} \leq K$, $\gamma= D^su+ g \mathcal{L}^1$ and $u^{\prime} - g \in L^2(a,b)$ since the estimates are trivial otherwise. We first show the result for the particular case $u \in SBV^2(a,b)$ and then deduce the general result by approximation.

\begin{Prop}\label{SBVupperbpundestimate}
For every $(u,\gamma) \in  SBV^2(a,b) \times \mathcal{M}(a,b)$ with $\norm{u}_{L^{\infty}(a,b)} \leq K$, $\gamma = D^su+ g \mathcal{L}^1$ and $u^{\prime}- g \in L^2(a,b)$ we have
\begin{equation*}
 E^{\prime \prime}(u,\gamma) \leq E(u,\gamma). 
\end{equation*}
\end{Prop}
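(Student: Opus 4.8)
Since $u \in SBV^2(a,b)$, the jump set $J_u = \{t_1,\dots,t_N\}$ is finite; write $s_k \coloneqq [u](t_k)$ and decompose $u = u_a + u_j$ as in~\eqref{decomposition_u_1d}, with $u_a \in W^{1,2}(a,b)$ continuous and $u_j$ a pure jump function. For small $\varepsilon > 0$ fix $\ell_\varepsilon > 0$ with $\ell_\varepsilon = o(\varepsilon)$, chosen so that each $t_k + \ell_\varepsilon$ is a Lebesgue point of $u$ lying strictly between $t_k$ and the next jump point, and set $J_k^\varepsilon \coloneqq (t_k, t_k+\ell_\varepsilon)$ (pairwise disjoint and mutually far apart once $\varepsilon$ is small). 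Let $\ue \coloneqq u$ on $(a,b)\setminus\bigcup_k J_k^\varepsilon$ and, on each $J_k^\varepsilon$, let $\ue$ be the affine interpolation between the precise values $u(t_k-)$ and $u(t_k+\ell_\varepsilon)$; then $\ue \in W^{1,1}(a,b)$ is continuous, $\ue' \equiv c_k^\varepsilon \coloneqq \ell_\varepsilon^{-1}\bigl(u(t_k+\ell_\varepsilon) - u(t_k-)\bigr)$ on $J_k^\varepsilon$, and $c_k^\varepsilon\ell_\varepsilon \to s_k$. Since $|u(t_k-)|, |u(t_k+\ell_\varepsilon)| \le K$, the interpolant keeps $\norm{\ue}_{\LU} \le K$. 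Finally set $\ggae \coloneqq g$ on $(a,b)\setminus\bigcup_k J_k^\varepsilon$ and $\ggae \coloneqq c_k^\varepsilon$ on each $J_k^\varepsilon$, and $\gae \coloneqq \ggae\mathcal{L}^1$. By construction $\ue' - \ggae = u'-g \in L^2$ off the layers and $\ue'-\ggae = 0$ on them, so $(\ue,\gae) \in \W \times \M$ is admissible for $E_\varepsilon$.

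The required convergences are immediate. Outside $\bigcup_k J_k^\varepsilon$, which has measure $N\ell_\varepsilon \to 0$, we have $\ue = u$, both bounded by $K$, so $\ue \to u$ in $\LE$. Moreover $\gae = g\,\mathcal{L}^1\mrs\bigl((a,b)\setminus\bigcup_k J_k^\varepsilon\bigr) + \sum_k c_k^\varepsilon\1_{J_k^\varepsilon}\mathcal{L}^1 \weakstar g\,\mathcal{L}^1 + \sum_k s_k\delta_{t_k} = g\,\mathcal{L}^1 + D^su = \gamma$, with $|\gae|(a,b)$ uniformly bounded, hence $\gae \to \gamma$ in the flat norm by Lemma~\ref{Lemma_weak_negativ_flat}. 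The elastic part of the energy is harmless as well: $\int_a^b|\ue'-\ggae|^2\dd x = \int_{(a,b)\setminus\bigcup_k J_k^\varepsilon}|u'-g|^2\dd x \to \int_a^b|u'-g|^2\dd x$ by dominated convergence.

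The crux is the nonlocal term. Using that $f$ is increasing and $c_0$-Lipschitz (so $f(\alpha+\beta) \le f(\beta) + c_0\alpha$ for $\alpha,\beta \ge 0$), together with $|\ggae| \le |g| + \sum_k|c_k^\varepsilon|\1_{J_k^\varepsilon}$, we obtain pointwise in $x$
\begin{equation*}
 f\Bigl(\varepsilon\dashint_{I_\varepsilon(x)\cap(a,b)}|\ggae|\dd t\Bigr) \le f\Bigl(\varepsilon\dashint_{I_\varepsilon(x)\cap(a,b)}\textstyle\sum_k|c_k^\varepsilon|\1_{J_k^\varepsilon}\dd t\Bigr) + c_0\,\varepsilon\dashint_{I_\varepsilon(x)\cap(a,b)}|g|\dd t.
\end{equation*}
Multiplying by $\tfrac1\varepsilon$ and integrating over $(a,b)$, the last term yields $c_0\int_a^b\dashint_{I_\varepsilon(x)\cap(a,b)}|g|\dd t\,\dd x \to c_0\int_a^b|g|\dd x$ by a standard Fubini/Lebesgue-point argument. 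The first term is supported on the set of $x$ with $I_\varepsilon(x)\cap\bigcup_k J_k^\varepsilon \ne \emptyset$, which for small $\varepsilon$ is a disjoint union of intervals of length $2\varepsilon+\ell_\varepsilon$ around the $t_k$; on each such $x$ (interior, so $I_\varepsilon(x)\subset(a,b)$ and $|I_\varepsilon(x)| = 2\varepsilon$) one has $\varepsilon\dashint_{I_\varepsilon(x)}\sum_k|c_k^\varepsilon|\1_{J_k^\varepsilon}\dd t = \tfrac12|c_k^\varepsilon|\,|I_\varepsilon(x)\cap J_k^\varepsilon| \le \tfrac12|c_k^\varepsilon|\ell_\varepsilon$, whence by monotonicity of $f$
\begin{equation*}
 \frac1\varepsilon\int_a^b f\Bigl(\varepsilon\dashint_{I_\varepsilon(x)\cap(a,b)}\textstyle\sum_k|c_k^\varepsilon|\1_{J_k^\varepsilon}\dd t\Bigr)\dd x \le \sum_k\frac{2\varepsilon+\ell_\varepsilon}{\varepsilon}\,f\Bigl(\tfrac12|c_k^\varepsilon|\ell_\varepsilon\Bigr) \xrightarrow[\varepsilon\to0]{} \sum_k 2f\Bigl(\tfrac12|[u](t_k)|\Bigr),
\end{equation*}
using $\ell_\varepsilon/\varepsilon \to 0$, $c_k^\varepsilon\ell_\varepsilon \to s_k$ and continuity of $f$. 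Adding everything, $\limsup_{\varepsilon\to0}E_\varepsilon(\ue,\gae) \le \int_a^b|u'-g|^2\dd x + c_0\int_a^b|g|\dd x + 2\sum_{x\in J_u}f(\tfrac12|[u](x)|) = E(u,\gamma)$.

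\textbf{Expected main obstacle.} Pinning down the sharp surface constant $2f(\tfrac12|[u](t_k)|)$: this forces the transition layers $J_k^\varepsilon$ to be asymptotically negligible compared with the convolution radius $\varepsilon$ (i.e.\ $\ell_\varepsilon = o(\varepsilon)$), so that $\varepsilon\dashint_{I_\varepsilon(x)\cap(a,b)}|\ggae|$ is essentially the constant $\tfrac12|[u](t_k)|$ over an $x$-set of length $\approx 2\varepsilon$, producing exactly $\tfrac1\varepsilon\cdot 2\varepsilon\cdot f(\tfrac12|[u](t_k)|)$. The only other delicate point is that $g$ is merely in $L^1$, so the interference of the background density with the layers near the jumps must be absorbed through the Lipschitz continuity of $f$ rather than a pointwise bound; the $L^1$/flat convergences, the $L^\infty$ bound and the elastic term are routine.
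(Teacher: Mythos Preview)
Your argument is correct and follows the same overall strategy as the paper: concentrate each jump of $u$ into an affine transition layer of width $o(\varepsilon)$, so that near a jump the average $\varepsilon\dashint_{I_\varepsilon(x)}|g_\varepsilon|$ is essentially $\tfrac12|[u](t_k)|$ over an $x$-interval of length $\approx 2\varepsilon$, producing exactly $2f(\tfrac12|[u](t_k)|)$. The paper's construction differs in one technical point worth noting: it surrounds each transition layer (of width $2\varepsilon^2$) by buffer zones of width $2\varepsilon$ on which both $u_\varepsilon'$ and $g_\varepsilon$ are set to zero, so that the nonlocal window $I_\varepsilon(x)$ near a jump sees \emph{only} the concentrated part and never the background $g$; the nonlocal integral is then split by partitioning the $x$-domain into ``near jump'' and ``far from jump''. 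You dispense with the buffer zones and instead split additively via the Lipschitz estimate $f(\alpha+\beta)\le f(\beta)+c_0\alpha$, absorbing the $g$-interference as a linear $L^1$-term uniformly in $x$. Both routes yield the sharp constant; the paper's buffering makes the near-jump estimate a bit cleaner, while your Lipschitz splitting is more economical (no modification of $u$ or $g$ beyond the bare transition layer). One minor remark: under the hypotheses you actually have $g\in L^2(a,b)$ (since $u'\in L^2$ and $u'-g\in L^2$), not merely $L^1$ as you suggest in the obstacle section, though your argument does not need this.
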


\begin{proof}
Since $u \in SBV^2(a,b)\cap L^{\infty}(a,b)$, the jump set is finite, i.e.,~$J_u = \{ x_1, \ldots, x_{N-1} \}$ for some $N \in \N$, and we may further assume by the Sobolev embedding theorem that~$u$ is a piecewise continuous function with one-sided limits $u(x\pm)$ for all $x \in (a,b)$. Thus,~$\gamma$ is of the form 
\begin{equation*}
\gamma = \sum_{i =1}^{N-1} [u](x_i) \delta_{x_i} + g \mathcal{L}^1, 
\end{equation*}
with $g \in L^2(a,b)$. Let $x_0 = a$ and $x_N=b$. We then choose $\varepsilon$ small enough such that
\begin{equation*}
| x_{i+1} - x_i| >  2 \varepsilon^2 + 4 \varepsilon \quad \text{for every } i \in \{0, \dots, N-1\}.
\end{equation*}
We first define $u_{\varepsilon}\in W^{1,1}(a,b)$ nearby the jumps of $u$ by linear interpolation via
\[
\begin{array}{lll}
& u_{\varepsilon}  \coloneqq u & \text{on } (a,b) \setminus \bigcup_{i=1}^{N-1} (x_i - \varepsilon^2 - 2 \varepsilon, x_i +\varepsilon^2 + 2 \varepsilon),\\
& u^{\prime}_{\varepsilon}  \coloneqq  \frac{u(x_i + \varepsilon^2 + 2\varepsilon)-u(x_i- \varepsilon^2 -2 \varepsilon)}{2\varepsilon^2} &\text{on } (x_i -\varepsilon^2 , x_i +\varepsilon^2) \text{ for } i \in \{1,\ldots,N-1\},\\[0.1cm]
& u^{\prime}_{\varepsilon}  \coloneqq  0 & \text{otherwise}
\end{array}
\]
(see the figure below). By construction, we have $\norm{\ue}_{L^{\infty}(a,b)} \leq K$ for all $\varepsilon$, and taking advantage of the fact that~$u$ is only modified on the intervals $(x_i - \varepsilon^2 - 2\varepsilon, x_i +\varepsilon^2 + 2\varepsilon)$ for $i \in \{1,\ldots,N-1\}$, we also have 
\begin{equation*}
\norm{u_{\varepsilon}-u}_{L^1(a,b)} \leq \int_{\bigcup_{i=1}^{N-1} (x_i - \varepsilon^2 - 2\varepsilon, x_i +\varepsilon^2 + 2\varepsilon)} |u_{\varepsilon} - u| \dd x \leq 4 K (N-1) ( \varepsilon^2 + 2\varepsilon),
\end{equation*}
which shows strong convergence of $\{u_{\varepsilon}\}_{\varepsilon}$ to~$u$ in $L^1(a,b)$. 

\begin{figure}[H]
\centering
\begin{tikzpicture}[scale= 0.8]
\draw[->] (-0.2,0) -- (7.3,0) node[right]  {$x$};
\draw[->] (0,-0.2) -- (0,5) node[above]  {$u$} ;
\draw[-] (0,0) --(2,1);
\draw[-] (2,1.5) -- (5, 3);
\draw[-]  (5, 4) -- (7, 5);
\draw[dashed] (2,-0.5) -- (2,5);
\draw[dashed] (5,-0.5) -- (5,5);

\begin{scope}[xshift=9cm,yshift=7cm]

\draw[->] (-0.2,-7) -- (7.3,-7) node[right]  {$x$};
\draw[->] (0,-7.2) -- (0,-2) node[above]  {$u_{\varepsilon}$} ;
\draw[-] (0,-7) --(1,-6.5);
\draw[-](1,-6.5) -- (1.7, -6.5);
\draw[-] (1.7,-6.5) --(2.3, -4.85);
\draw[-] (2.3,-4.85) -- (3.3, -4.85);
\draw[-]  (3.3, -4.85) -- (3.7, -4.65 );
\draw[-]  (3.7, -4.65) -- (4.7, -4.65);
\draw[-]  (4.7, -4.65) -- (5.3, -2.35);
\draw[-]  (5.3, -2.35) -- (6.3, -2.35);
\draw[-]  (6.3, -2.35) -- (7, -2);
\coordinate (A)  at (2,-7.3);
\node[below, scale=0.8, blue] at (A) {$2 \varepsilon^2$};
\coordinate (C)  at (5,-7.3);
\node[below, scale=0.8, blue] at (C) {$2 \varepsilon^2$};
\draw[dotted] (1,-14.3) -- (1,-2);
\draw[dotted] (1.7,-14.3) -- (1.7,-2);
\draw[dotted] (2.3,-14.3) -- (2.3,-2);
\draw[dotted] (3.3,-14.3) -- (3.3,-2);
\draw[dotted] (3.7,-14.3) -- (3.7,-2);
\draw[dotted] (4.7,-14.3) -- (4.7,-2);
\draw[dotted] (5.3,-14.3) -- (5.3,-2);
\draw[dotted] (6.3,-14.3) -- (6.3,-2);

\draw[->] (-0.2,-14) -- (7.3,-14) node[right]  {$x$};
\draw[->] (0,-14.2) -- (0,-9) node[above]  {$g_{\varepsilon}$} ;

\fill[yellow!50]  (1.7,-14) rectangle (2.3, -12);
\draw[-] (1.7,-14) rectangle (2.3, -12);
\fill[yellow!50]   (4.7,-14) rectangle (5.3, -12);
\draw[-]  (4.7, -14) rectangle (5.3, -12);

\fill[green! 70 ! black] (0,-14) rectangle  (1,-13.5);
\draw (0,-14) rectangle  (1,-13.5);
 \fill[green! 70 ! black] (3.3,-14) rectangle  (3.7,-13.5);
\draw (3.3,-14) rectangle  (3.7,-13.5);
\fill[green! 70 ! black]  (6.3,-14) rectangle  (7,-414/30);
\draw (6.3,-14) rectangle  (7,-414/30);

\node[scale=0.8, below,  text=blue] at (2,-14.3) {$2 \varepsilon^2 + 4 \varepsilon$};
\node[scale=0.8, below,  text=blue] at (5,-14.3) {$ 2 \varepsilon^2 + 4 \varepsilon $};

\draw[blue, thick, <->] (1,-14.3) -- (3.3,-14.3);
\draw[blue, thick,<->] (3.7,-14.3) -- (6.3,-14.3);
\draw[blue, thick, <->] (1.7,-7.3) -- (2.3,-7.3);
\draw[blue, thick,<->] (4.7,-7.3) -- (5.3,-7.3);

\node[fill = green! 70! black] (A) at (0.6,-11) {$\gamma^{a}$};
\coordinate (Y3) at (3.5,-13.8);
\coordinate (G1) at (0.5,-13.8);
\coordinate (G2) at (2.75,-13.8);
\coordinate (G3) at (4.25,-13.6);
\coordinate (G4) at (6.6,-13.9);
\draw[->] (A) --(G1);
\draw[->] (A) --(Y3);
\draw[->] (A) --(G4);

\draw[dashed] (2,-2) -- (2,-7);
\draw[dashed] (5,-2) -- (5,-7);

\draw[dashed] (2,-14.3) -- (2,-7.9);
\draw[dashed] (5,-14.3) -- (5,-7.9);

\node[fill = yellow!50] (D) at(5,-11) {$ \frac{u(x_i+ \varepsilon^2 + 2\varepsilon) - u(x_i- \varepsilon^2 - 2\varepsilon)}{2 \varepsilon^2}$};
\coordinate (Y1) at (2,-12.5);
\coordinate (Y2) at (5,-12.5);

\draw[->] (D) --(Y1);
\draw[->] (D) --(Y2);

\end{scope}

\node[text width=7cm, align=justify,scale=0.8] at (3.5,-1.8) {{\bf Fig.:} Construction of the recovery sequence for a piecewise affine function with jump discontinuities.};

\end{tikzpicture}
\end{figure}

We next define $\gamma_{\varepsilon} \in \mathcal{M}(a,b)$ as $\gae = g_{\varepsilon} \mathcal{L}^1$, where $\ggae \in L^2(a,b)$ is given by
\[g_{\varepsilon}: =\begin{cases} g & \text{on } (a,b) \setminus \bigcup_{i=1}^{N-1} (x_i - \varepsilon^2 - 2\varepsilon, x_i +\varepsilon^2 + 2\varepsilon),\\ 
\ue^{\prime}& \text{on } (x_i -\varepsilon^2 , x_i +\varepsilon^2) \text{ for } i \in \{1, \ldots,N-1\},
 \\0& \text{otherwise}.
\end{cases}\]
We notice that 
\begin{equation*}
 \gamma_{\varepsilon}-\gamma = \sum_{i=1}^{N-1} \Big( \ue^{\prime}\1_{(x_i -\varepsilon^2 , x_i +\varepsilon^2)} \mathcal{L}^1 -  [u](x_i) \delta_{x_i} - g \1_{ (x_i - \varepsilon^2 -2 \varepsilon, x_i +\varepsilon^2 + 2\varepsilon)} \mathcal{L}^1 \Big).
\end{equation*}
Therefore, we observe from the definition of $u_\varepsilon^{\prime}$ that for every function $\varphi \in W^{1, \infty}_0(a,b)$ with $\norm{\varphi}_{W^{1, \infty}_0(a,b)}\leq 1$ there holds
\begin{align*}
 \int_{x_i-\varepsilon^2 - 2\varepsilon}^{x_i +\varepsilon^2 + 2\varepsilon} \varphi \diff (\gamma_{\varepsilon}-\gamma) & = \big[ u(x_i+\varepsilon^2 + 2\varepsilon)-u(x_i-\varepsilon^2 -2 \varepsilon) \big] \bigg( \, \dashint_{x_i -\varepsilon^2}^{x_i +\varepsilon^2} \varphi \diff x- \varphi(x_i)\bigg) \\
 &\quad +  \big[ u(x_i+\varepsilon^2 + 2\varepsilon)-u(x_i+) - u(x_i-\varepsilon^2 - 2\varepsilon)+u(x_i-)\big]  \varphi(x_i) \\
 & \quad - \int_{x_i-\varepsilon^2 - 2\varepsilon}^{x_i +\varepsilon^2 + 2\varepsilon} \varphi g \diff x
\end{align*}
for every $i \in \{1,\ldots,N-1\}$. Since we also have $|\varphi(x)-\varphi(x_i)| \leq |x-x_i| \leq \varepsilon^2$ on $(x_i -\varepsilon^2 , x_i +\varepsilon^2)$, we deduce from the bound $\norm{u}_{L^{\infty}(a,b)} \leq K$ and the Cauchy--Schwarz inequality
\begin{align*}
 \norm{\gamma_{\varepsilon}-\gamma}_{\text{flat}} & \leq 2K (N-1)\varepsilon^2  \\
 & \quad +  \sum _{i =1}^{N-1} \big[ |u(x_i+\varepsilon^2 + 2\varepsilon)-u(x_i+)| +|u(x_i-\varepsilon^2 - 2\varepsilon)-u(x_i-)|\big] \\
 & \quad + \norm{g}_{L^2(a,b)} \big[ 2(N-1) (\varepsilon^2 + 2\varepsilon) \big]^{\frac{1}{2}}.
\end{align*}
This shows the convergence of $\{\gamma_{\varepsilon}\}_{\varepsilon}$ to~$\gamma$ in the flat norm. It only remains to establish the energy estimate. From the construction of $(u_\varepsilon,\gamma_\varepsilon)$, we clearly have $|u'_{\varepsilon}- g_{\varepsilon}| \leq |u' - g|$ on $(a,b)$. Therefore, the elastic energy contribution in $E_{\varepsilon}(u_{\varepsilon},\gamma_{\varepsilon})$ is estimated by 
\begin{equation*}
 \int_a^b |u'_{\varepsilon}- g_{\varepsilon}|^2  \dd x \leq \int_a^b  |u^{\prime} - g|^2 \dd x.
\end{equation*}
Due to the monotonicity of~$f$ and $f(t) \leq c_0 t$ for all $t \geq 0$, we estimate the non-local energy term by 
\begin{align*}
 & \frac{1}{\varepsilon} \int_a^b f \bigg(  \varepsilon \dashint_{I_{\varepsilon}(x)\cap (a,b)}{ |g_{\varepsilon}| \dd t }\bigg) \dd x \\
 & \leq \frac{1}{\varepsilon} \int_{(a,b)\setminus \bigcup_{i=1}^{N-1} (x_i - \varepsilon^2 -  \varepsilon, x_i +\varepsilon^2 + \varepsilon)} f\bigg( \varepsilon \dashint_{I_\varepsilon(x)\cap (a,b)}|g | \dd t\bigg) \dd x \\ 
 & \quad + \frac{1}{\varepsilon} \int_{\bigcup_{i=1}^{N-1} (x_i - \varepsilon^2 -  \varepsilon, x_i +\varepsilon^2 + \varepsilon)} f\bigg(  \frac{1}{2} \int_{(x_i-\varepsilon^2,x_i+\varepsilon^2)} |u_{\varepsilon}^{\prime} | \dd t \bigg) \dd x \\ 
 & \leq c_0 \int_a^b \dashint_{I_\varepsilon(x)\cap (a,b)}|g | \dd t \dd x \\
 & \quad + \frac{2(\varepsilon^2 + \varepsilon)}{\varepsilon} \sum_{i=1}^{N-1} \dashint_{x_i-\varepsilon^2-\varepsilon}^{x_i+\varepsilon^2+\varepsilon}{f\bigg(  \frac{1}{2} |u(x_i+\varepsilon^2 + 2 \varepsilon)-u(x_i-\varepsilon^2- 2\varepsilon)|\bigg)} \dd x .
\end{align*}
With the continuity of~$u$ outside of the jump set~$J_u$ we can pass to the limit $\varepsilon \to 0$ on the right-hand side. In this way, we finally arrive at
\begin{align*}
E^{\prime \prime}(u,\gamma) & \leq \limsup_{\varepsilon \to 0} E_{\varepsilon}(u_{\varepsilon},\gamma_{\varepsilon})\\ &\leq   \int_a^b  | u^{\prime} - g|^2 \dd x + c_0 \int_a^b|g| \dd x + 2 \sum_{i=1}^{N-1} f \bigg( \frac{1}{2}|[u](x_i)| \bigg) \\
& = E(u, \gamma). \qedhere
\end{align*} 
\end{proof}

\begin{remark}\label{rmk:W-Wrelax}
For a general stored energy function $W$ as described in Remark~\ref{rmk:gen-stored-en} the above argument can be augmented with a standard relaxation step by adding to $u_\varepsilon$ a function $v_\varepsilon \in W^{1,p}_0(a,b)$ such that $v_\varepsilon \to 0$ in $L^p(a,b)$ and 
\begin{equation*}
 \limsup_{\varepsilon \to 0} \int_a^b W(u'_{\varepsilon} + v'_{\varepsilon} - g_{\varepsilon}) \dd x 
 \leq \int_a^b  W^{**}(u^{\prime} - g) \dd x.
\end{equation*}
So also in this case we have $E^{\prime \prime}(u,\gamma) \leq \limsup_{\varepsilon \to 0} E_{\varepsilon}(u_{\varepsilon}+v_{\varepsilon},\gamma_{\varepsilon}) \leq E(u,\gamma)$. 
\end{remark}

By approximation with $SBV^2$-functions, we can now give the proof of the second part of Theorem~\ref{mainresult}.

\begin{theorem}\label{thm:Gmma-limsup}
For every $(u,\gamma) \in  BV(a,b) \times \mathcal{M}(a,b)$  with $\norm{u}_{L^{\infty}(a,b)} \leq K$, $\gamma = D^su+ g \mathcal{L}^1$ and $u^{\prime}- g \in L^2(a,b)$ we have
\begin{equation*}
E^{\prime \prime}(u,\gamma) \leq E(u,\gamma).
\end{equation*}
\end{theorem}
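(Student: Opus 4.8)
The plan is to reduce the general case to the already-established special case in Proposition~\ref{SBVupperbpundestimate} by approximating an arbitrary admissible pair $(u,\gamma)$ with pairs whose first component lies in $SBV^2(a,b)$, and then to invoke lower semicontinuity of the $\Gamma$-upper limit $E''$. Concretely, for $(u,\gamma)\in BV(a,b)\times\M$ with $\norm{u}_{\LU}\leq K$, $\gamma=D^su+g\mathcal{L}^1$ and $u'-g\in L^2(a,b)$, I would construct a sequence $(u_n,\gamma_n)\in SBV^2(a,b)\times\M$ with $\norm{u_n}_{\LU}\leq K$, $\gamma_n=D^su_n+g_n\mathcal{L}^1$, $u_n'-g_n\in L^2(a,b)$, such that $u_n\to u$ in $L^1(a,b)$, $\gamma_n\to\gamma$ in the flat norm, and $\limsup_{n\to\infty}E(u_n,\gamma_n)\leq E(u,\gamma)$. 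Once this is done, Proposition~\ref{SBVupperbpundestimate} gives $E''(u_n,\gamma_n)\leq E(u_n,\gamma_n)$ for each $n$, and the lower semicontinuity of $E''$ (which is automatic, being a $\Gamma$-upper limit, i.e.\ an infimum of $\limsup$'s over recovery sequences — one extracts a diagonal sequence) yields $E''(u,\gamma)\leq\liminf_n E''(u_n,\gamma_n)\leq\limsup_n E(u_n,\gamma_n)\leq E(u,\gamma)$, which is the claim.

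The construction of the approximating sequence is the heart of the matter, and it must handle the Cantor part of $Du$ as well as a possibly infinite jump set. The natural device is the decomposition $u=u_a+u_j+u_c$ from \eqref{decomposition_u_1d}. For the jump part, one truncates the (at most countably many) jumps: replace $u_j$ by the finite sum $u_j^n$ keeping only the $n$ largest jumps; since $f$ is subadditive and $\sum_{x\in J_u\setminus J_{u_j^n}}|[u_j](x)|\to 0$, the surface energy converges, $2\sum_{x\in J_{u_j^n}}f(\tfrac12|[u_j^n](x)|)\to 2\sum_{x\in J_u}f(\tfrac12|[u](x)|)$, using continuity of $f$ at the retained jumps and the bound $f(t)\leq c_0 t$ to control the discarded ones. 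For the Cantor part, one approximates $u_c$ strictly by smooth (hence $W^{1,1}$) functions $u_c^n\to u_c$ with $|Du_c^n|(a,b)\to|D^cu_c|(a,b)$, as recalled in Section~\ref{sec_preliminaries}; the linear growth of the Cantor term $c_0|D^cu|$ makes it continuous under strict convergence. One then sets $u_n:=u_a+u_j^n+u_c^n$ after truncating in $L^\infty$ at level $K$ if necessary (this can only decrease the energy and, since $\norm{u}_{\LU}\leq K$, does not affect the limit), which places $u_n\in SBV^2$ provided also $u_a'=u'\in L^2$ locally — but in fact one only needs $u_n'-g_n\in L^2$, so choosing $g_n:=g+(u_n'-u')$ on the modified Cantor region (or more simply $g_n:=g$ away from the regularized Cantor/jump sets and absorbing the smoothing correction into $g_n$ so that $u_n'-g_n=u'-g\in L^2$ there) keeps the elastic energy $\int|u_n'-g_n|^2$ equal to $\int|u'-g|^2$ up to a vanishing error, while $\int c_0|g_n|\to\int c_0|g|$. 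One must check $\gamma_n=D^su_n+g_n\mathcal{L}^1\to\gamma$ in the flat norm: the singular parts converge since $Du_j^n\to D^ju$ and $D^cu_c^n\to D^cu$ weakly-$*$ as measures (bounded total variation, hence flat convergence by Lemma~\ref{Lemma_weak_negativ_flat}), and the $g_n\mathcal{L}^1$ parts converge in $L^1$, hence in the flat norm by \eqref{eq:flat_trivial_estimate}.

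The main obstacle I anticipate is the careful bookkeeping in the simultaneous regularization of the jump and Cantor parts while preserving the constraint $u_n'-g_n\in L^2$ and the exact identity (or asymptotic equality) $\int_a^b|u_n'-g_n|^2\,\mathrm dx\to\int_a^b|u'-g|^2\,\mathrm dx$; in particular one needs the smoothing of $u_c$ to be compatible with $g$ remaining in $L^1$ and the correction $u_n'-u'$ on the smoothing region being supported on a set of vanishing measure (or otherwise controllable), so that it contributes neither to the limit of the elastic term nor to that of $c_0\int|g_n|$. A clean way around this, mirroring the structure already present in Lemma~\ref{LVtrunE} and Lemma~\ref{cutoff}, is to first reduce to $u\in SBV(a,b)$ (eliminating the Cantor part by strict approximation, with the measure $\gamma$ correspondingly modified on the vanishing Cantor region) and then, on a bounded interval with a finite number of jumps, truncate the remaining infinitely many small jumps — each reduction step being a genuine recovery-type construction to which Proposition~\ref{SBVupperbpundestimate} and lower semicontinuity of $E''$ apply. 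I expect the remaining estimates (continuity of the three energy contributions under these approximations) to be routine given the explicit forms of $f$ and $\psi$ and the tools from Section~\ref{sec_preliminaries}.
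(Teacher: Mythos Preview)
Your overall strategy coincides with the paper's: decompose $u=u_a+u_j+u_c$, keep only the large jumps of $u_j$, replace $u_c$ by a strict $W^{1,1}$ approximation, absorb the resulting derivative $(u_c^n)'$ into $g_n$ so that the Cantor contribution $c_0|D^cu|$ is recovered from the $c_0\int|g_n|$ term, and finally restore the $L^\infty$ bound.

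There is, however, a concrete gap. Proposition~\ref{SBVupperbpundestimate} is stated for $u\in SBV^2(a,b)$, i.e.\ $u'\in L^2(a,b)$ and $\#J_u<\infty$; it is not enough that $u_n'-g_n\in L^2$. With your choice $u_n=u_a+u_j^n+u_c^n$ one has $u_n'=u'+(u_c^n)'$, and since in general $u'\in L^1\setminus L^2$, your $u_n$ need not belong to $SBV^2$, so Proposition~\ref{SBVupperbpundestimate} does not apply. (Its proof also uses $g\in L^2$ in the Cauchy--Schwarz step, which would fail for your $g_n=g+(u_c^n)'$ as well.) The paper resolves this by additionally mollifying $u_a$ and $g$ with the \emph{same} kernel: $u_{a,h}=u_a*\psi_h$, $g_{a,h}=g*\psi_h$, so that $u_{a,h}'-g_{a,h}=(u'-g)*\psi_h\to u'-g$ in $L^2$ while now $u_h'\in C^\infty\subset L^2$ and $g_h\in C^\infty\subset L^2$, placing $u_h$ genuinely in $SBV^2$.

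A second, smaller point: truncating at level exactly $K$ need not preserve flat convergence of $\gamma_n\to\gamma$ when $|u|=K$ on a set of positive measure, since $|\{|u_n|>K\}|$ need not vanish. The paper truncates instead at $K+\eta_h$ with $\eta_h\to0$ (for which $|\{|u_h|>K+\eta_h\}|\to0$ is guaranteed) and then rescales by $K/(K+\eta_h)$ to achieve $\|\hat u_h\|_{L^\infty}\le K$.
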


\begin{proof}
We here want to construct a sequence $\{(\hat{u}_h,\hat{\gamma}_h)\}_{h}$ in $SBV^2(a,b) \times \M $ (with $\norm{\hat{u}_h}_{L^{\infty}(a,b)} \leq K$, $\hat{\gamma}_h = D^s \hat{u}_h+ \hat{g}_{h} \mathcal{L}^1$ and $\hat{u}_h^{\prime} - \hat{g}_{h} \in L^2(a,b)$ for every $h>0$) such that $\hat{u}_h \to u$ in $L^1(a,b)$, $\hat{\gamma}_h \to \gamma$ in the flat norm and
\begin{equation}
\label{recovery_approximation}
\liminf_{h \to 0}E(\hat{u}_h, \hat{\gamma}_h) \leq E(u,\gamma).
\end{equation}
This is indeed sufficient since by lower semi-continuity of the $\Gamma$-upper limit~$E''$ and by Proposition~\ref{SBVupperbpundestimate} we then conclude with
\begin{equation*}
 E^{\prime \prime}(u,\gamma) \leq \liminf_{h \to 0} E''(\hat{u}_h, \hat{\gamma}_h) \leq  \liminf_{h \to 0} E(\hat{u}_h, \hat{\gamma}_h) \leq E(u,\gamma).
\end{equation*}

We first recall that in dimension one every function $u \in BV(a,b)$ can be represented as $u_a+u_j+u_c$, see~\eqref{decomposition_u_1d} where $u_a \in W^{1,1}(a,b)$, $u_j$ is a pure jump function and $u_c$ is a Cantor function. This allows us to modify the three parts of $u$ separately. We start with the jump function $u_j$. We define $u_{j,h}$ by
\begin{equation*}
u_{j, h}(x): = u_j(a+) + \sum_{ y \in J_{u_{j}}(h) \cap (a,x] } [u_j](y) \quad \text{for } x \in (a,b), 
\end{equation*}
where $J_{u_{j}}(h)  \coloneqq  \{y\in J_{u_j} \colon |[u_j](y)| > h \} = J_{u_{j,h }}$. We observe $\# J_{u_{j,h }} < \infty$, $u_{j,h } \to u_j$ in $L^1(a,b)$ as $h \to 0$, and for all~$h$ the estimate
\begin{equation}
 \sum_{x \in J_{u_{j,h }}}  \ff{ \frac{1}{2} |[u_{j,h }](x)|} \leq \sum_{x \in J_{u_j}} \ff{ \frac{1}{2}|[u_j](x)|}. \label{jumpinequalty}
\end{equation}
For the Cantor function~$u_c$ we use the density of smooth functions in $BV(a,b)$ with respect to the strict topology. In this way, we find a sequence $\{u_{c,h }\}_{h }$ in $W^{1,1}(a,b)$ with $u_{c,h} \to u_c$ in $L^1(a,b)$ and
\begin{align}
\lim_{h \to 0} \int_{a}^{b} |u^{\prime}_{c,h }| \dd x = |Du_c|(a,b). \label{cantorinequalty}
\end{align}
The absolutely continuous part~$u_a$ is first extended to a $W^{1,1}(\R)$ function with compact support and we then set $u_{a, h } \coloneqq u_a * \psi_{h }$ for all $h >0$, where $\psi_h$ is a standard $h$-mollifier given by $\psi_{h}(x) \coloneqq  h^{-1} \psi (h^{-1}x)$ for $x \in \R$, for a fixed non-negative, symmetric function $\psi \in C^{\infty}(\R)$ with compact support and normalized to $\int_\R \psi \dd x = 1$. We then have $ u_{a,h } \in C^{\infty}(\R)$ for all $h>0$, $u_{ a, h } \to  u_a$ in $L^1(a,b)$ and $u_{a,h }^{\prime} = u_a^{\prime} * \psi_h$, see e.g.~\cite[Theorem~4.2.1]{eg}. Then we set
\begin{equation*}
 u_h  \coloneqq  u_{a,h } + u_{j, h } + u_{c,h } \quad \text{for all } h > 0.
\end{equation*}
We clearly have $\{u_h\}_{h}$ in $SBV^2(a,b) $ for all $h >0$ and $u_h \to u$ in $L^1(a,b)$, which implies $Du_h \to Du$ in the flat norm.

We next address the modification of $\gamma$. We extend the absolutely continuous part~$g$ outside of $(a,b)$ by~$0$ and set $g_{a,h } \coloneqq  g * \psi_{h }$ for all $h >0$.
Then we have $g_{a,h } \in C^{\infty}(\R)$ for all $h >0$ and 
\begin{align}
g_{ a, h } \to g \text{ in } L^1(a,b). \label{absolutelygammacontinequalty}
\end{align}
Because of  $u_a^{\prime} -  g \in L^2(a,b)$ and $u_{a,h }^{\prime} -  g_{ a, h } = (u_a^{\prime} -  g) * \psi_{h }$ we further notice
\begin{align}
u_{a,h }^{\prime} -  g_{a, h } \to u_a^{\prime} - g \text{ in } L^2(a,b). \label{absolutelycontinequalty}
\end{align}
Now, we set 
\begin{equation*}
\gamma_h: = Du_{j,h }+ g_{h } \mathcal{L}^1 \quad \text{with } g_{h }  \coloneqq  g_{a, h } + u_{c, h }^{\prime}  \quad \text{for all } h > 0.
\end{equation*}
With the convergences $Du_h \to Du$ and $g_{a,h} - u_{a,h}' \to g - u_a'$ in the flat norm (via~\eqref{absolutelycontinequalty}), we infer $\gamma_h \to \gamma$ in the flat norm.
Since $u_{c, h }^{\prime}$ is canceled in the first term, we deduce
\begin{multline*}
\int_a^b  | u_h^{\prime} - g_{h }|^2 \dd x + c_0 \int_a^b|g_{h }| \dd x  + 2 \sum_{x \in J_{u_{h }} }\ff {\frac{1}{2}|[u_{h }](x)|} \\
\leq  \int_a^b  | u_{a,h }^{\prime} - g_{a, h }|^2 \dd x + c_0 \int_a^b \big( |g_{ a, h }| + | u_{c, h }^{\prime}| \big) \dd x + 2 \sum_{x \in J_{u_{j, h }}} \ff {\frac{1}{2}|u_{j, h }](x)|},
\end{multline*}
which, via~\eqref{jumpinequalty}, \eqref{cantorinequalty}, \eqref{absolutelygammacontinequalty} and~\eqref{absolutelycontinequalty}, implies 
\begin{equation}
\limsup_{h \to \infty} \bigg [ \int_a^b  | u_h^{\prime} - g_{h }|^2 \dd x + c_0 \int_a^b|g_{h }| \dd x  + 2 \sum_{x \in J_{u_{h }} }\ff {\frac{1}{2}|[u_{h }](x)|} \bigg] \leq E(u,\gamma).
  \label{estimategluedfunction}
\end{equation}
This does not yet show~\eqref{recovery_approximation}, since $\norm{u_h}_{L^{\infty}(a,b)} \leq K$ might not be satisfied for all $h >0$. We resolve this problem in two steps. With $\norm{u}_{L^{\infty}(a,b)} \leq K$ and $u_h \to u$ in $L^1(a,b)$, we can fix a sequence $\{\eta_{h }\}_{h }$ in $\R^+$ with $\eta_{h } \to 0^+$ as $h  \to 0$ and
\begin{equation}
|\lbrace x \in (a,b) \colon |u_h(x)| \geq K + \eta_{h } \rbrace | \to 0 \quad \text{as  } \,  h  \to 0. \label{etaarbi}
\end{equation}
We next define the truncated versions
\begin{equation*}
\tilde{u}_h(x)  \coloneqq  \min \{ \max \{ u_h (x), -K - \eta_{h } \} , K + \eta_{h } \} \quad \text{for all } h>0.
\end{equation*}
We then have $\tilde{u}_h \to u$ in $L^1(a,b)$, $D\tilde{u}_h \to Du$ in the flat norm and, in addition, also $\norm{\tilde{u}_h}_{L^{\infty}(a,b)} \leq K + \eta_{h }$ for all $h >0$. Correspondingly, we set
\begin{equation*}
\tilde{\gamma}_h  \coloneqq  D^j\tilde{u}_h + g_{h } \1_{ \lbrace\tilde{u}_h = u_h \rbrace}\mathcal{L}^1 \quad \text{for all } h>0.
\end{equation*}
By using~\eqref{eq:flat_trivial_estimate} and by applying subsequently the Cauchy--Schwarz inequality, we get
\begin{align*}
\norm{\tilde{\gamma}_{h } - \gamma_h}_{\textnormal{flat}} & \leq  \norm{(\tilde{\gamma}_{h } - D\tilde{u}_h) - (\gamma_h - Du_h)}_{\textnormal{flat}} + \norm{D\tilde{u}_h- Du_h }_{\textnormal{flat}}\\
 & =  \norm{(g_{h } - u_h^{\prime}) \1_{ \lbrace\tilde{u}_h \neq u_h \rbrace} \mathcal{L}^1}_{\textnormal{flat}} + \norm{D\tilde{u}_h- Du_h }_{\textnormal{flat}}\\
 & =  \norm{(g_{a,h } - u_{a, h }^{\prime})\1_{ \lbrace\tilde{u}_h \neq u_h \rbrace} \mathcal{L}^1}_{\textnormal{flat}} + \norm{D\tilde{u}_h- Du_h }_{\textnormal{flat}}\\
 & \leq \norm{(g_{a,h } - u_{a,h }^{\prime})\1_{ \lbrace\tilde{u}_h \neq u_h \rbrace} }_{L^1(a,b)}  + \norm{\tilde{u}_h- u_h}_{L^1(a,b)}\\
 & \leq \norm{g_{a,h } - u_{a,h }^{\prime}}_{L^2(a,b)} |\lbrace x \in (a,b) \colon \tilde{u}_h \neq u_h \rbrace|^{\frac{1}{2}} +  \norm{\tilde{u}_h- u_h}_{L^1(a,b)}.
\end{align*}
If we pass to the limit $h \to 0$ on the right-hand side, the first term vanishes because of the uniform boundedness of $u_h^{\prime} - g_h = u_{a,h}^{\prime} - g_{ a, h }$ in $L^2(a,b)$ due to~\eqref{absolutelycontinequalty} combined with the convergence 
\begin{align*}
\lbrace x \in (a,b) \colon \tilde{u}_h(x) \neq u_h \rbrace = \lbrace x \in (a,b) \colon |u_h(x)| > K + \eta_{h }\rbrace \to 0 \quad \text{as } h  \to 0
\end{align*} 
as a consequence from~\eqref{etaarbi}. Since with $u_h \to u$ and $\tilde{u}_h \to u$ in $L^1(a,b)$ also the second term vanishes, we conclude that $\tilde{\gamma}_{h } - \gamma_h \to 0$ in the flat norm. Consequently, we have established $\tilde{\gamma}_{h } \to \gamma$ in the flat norm. For $h  >0$ we finally define 
\begin{align*}
\hat{u}_{h }(x) \coloneqq  \frac{K}{K + \eta_{h }} \tilde{u}_h \quad \text{and} \quad \hat{\gamma}_{h }(x) \coloneqq  \frac{K}{K + \eta_{h }} \tilde{\gamma}_h(x) \quad  \text{for }  x \in (a,b).
\end{align*}
We clearly have $\norm{\hat{u}_{h }}_{L^{\infty}(a,b)} \leq K$, $J_{u_{h }} \subset J_{\hat{u}_{h }}$ and $|[\hat{u}_{h}]| \leq |[u_{h}]|$ for all $h  >0$. In view of $K /( K + \eta_{h }) \to 1$, we also have $\hat{u}_{h } \to u$ in $L^1(a,b)$ and $\hat{\gamma}_{h } \to \gamma$ in the flat norm. Moreover, if we denote the density of $\hat{\gamma}_{h }$ with respect to~$\mathcal{L}^1$ by $\hat{g}_{h }$, we observe $|\hat{u}_{h }^{\prime} - \hat{g}_{h}| \leq |u_h^{\prime} - g_{h }|$ and $|\hat{g}_{h}| \leq |g_{h }|$ on $(a,b)$ for all $h>0$. This shows, that the energy $E(\hat{u}_{h },\hat{\gamma}_{h })$ is finite for all $h>0$, with
\begin{align*}
 E(\hat{u}_{h },\hat{\gamma}_{h }) 
 & = \int_a^b  | \hat{u}_{h }^{\prime} - \hat{g}_{h }|^2 \dd x + c_0 \int_a^b|\hat{g}_{h }| \dd x \notag  + 2 \sum_{x \in J_{\hat{u}_{h }}} \ff {\frac{1}{2}|[\hat{u}_{h}](x)|} \\ & \leq \int_a^b  | u_h^{\prime} - g_{h }|^2 \dd x + c_0 \int_a^b|g_{h }| \dd x  + 2 \sum_{x \in J_{u_{h }}} \ff {\frac{1}{2}|[u_{h}](x)|}. 
\end{align*}
By taking into account~\eqref{estimategluedfunction}, we then obtain the claim~\eqref{recovery_approximation} (even for the $\limsup$), which ends the proof.
\end{proof}

\begin{remark}\label{rmk:SBV-relax-bv}
The function~$\hat{u}_h$ can even be chosen such that $\hat{u}_h(x) = u(a+)$ on $(a, \delta_h)$ and $\hat{u}_h(x) = u(b-)$ on $(b - \delta_h, b)$ for a sequence $\delta_h \searrow 0$. To see this, note that from $\hat{u}_h \to u$ a.e.\ and $\lim_{x \searrow a} u(x) = u(a+)$, $\lim_{x \nearrow b} u(x) = u(b-)$, one finds $\delta_h \searrow 0$ such that $a + \delta_h$ and $b - \delta_h$ are not contained in $J_{u_h}$ and $\lim_{h \to \infty} u_h(a+\delta_h) = u(a+)$, $\lim_{h \to \infty} u_h(b-\delta_h) = u(b-)$. Now consider $\hat{\hat{u}}_h \in SBV^2(a,b)$ defined by 
\[ \hat{\hat{u}}_h(x) = 
   \begin{cases} 
      u(a+)  &\text{for } x \in (a, a + \delta_h], \\ 
      \hat{u}_h(x) &\text{for } x \in (a + \delta_h, b - \delta_h), \\ 
      u(b-)  &\text{for } x \in [b - \delta_h, b) \\ 
   \end{cases} 
\]
and 
\[ \hat{\hat{\gamma}}_h 
   = \hat{\gamma}_h \mrs (a + \delta_h, b - \delta_h) 
     + \big( u_h(a + \delta_h) - u(a+) \big) \delta_{a + \delta_h} 
     + \big( u(b-) - u_h(b - \delta_h) \big) \delta_{b - \delta_h} \] 
The claim follows from observing that 
\[ E(\hat{\hat{u}}_h, \hat{\hat{\gamma}}_h) 
   \le E(\hat{u}_h, \hat{\gamma}_h) 
       + 2 c_0 | u_h(a + \delta_h) - u(a+) | 
       + 2 c_0 | u(b-) - u_h(b - \delta_h) | \] 
where the last two terms on the right-hand side vanish as $h \to \infty$ by construction. 
\end{remark}

Again, the case $K = \infty$ is a direct consequence.  
\begin{corollary}
For every $(u,\gamma) \in L^0((a, b);\overline{\R}) \times \M$ there is a sequence $\{(u_\varepsilon, \gamma_\varepsilon)\}_\varepsilon$ converging to $(u, \gamma)$ in $L^0((a, b);\overline{\R}) \times \M$ such that 
\begin{equation*}
  \limsup_{\varepsilon \to 0} E_\varepsilon(u_\varepsilon, \gamma_\varepsilon) 
  \le E(u,\gamma).
\end{equation*}
\end{corollary}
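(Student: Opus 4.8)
The plan is to reduce the unrestricted case to the already-established case $K < \infty$ (Theorem~\ref{thm:Gmma-limsup}) by constructing the recovery sequence piecewise on the intervals of the partition associated to $u \in BV_{\infty,\mathcal{P}}(a,b)$. Since the statement is trivial unless $E(u,\gamma) < \infty$, I may assume $u = w + \sum_{i=1}^m \alpha_i \chi_{(x_{i-1},x_i)}$ with $w \in BV(a,b)$, $\alpha_i \in \{-\infty,0,+\infty\}$, $(x_0,\ldots,x_m) \in \mathcal{P}$, and that $\gamma \mrs \mathcal{F}(u) = (D^s u + g\mathcal{L}^1)\mrs \mathcal{F}(u)$ with $g \in L^1(\mathcal{F}(u))$ and $u' - g \in L^2(\mathcal{F}(u))$.

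First I would treat the finite part. On $\mathcal{F}(u)$ the function $u$ restricts to a $BV$ function, so on each interval $(x_{i-1}, x_i)$ with $\alpha_i = 0$, the pair $(u|_{(x_{i-1},x_i)}, \gamma \mrs (x_{i-1},x_i))$ satisfies the hypotheses of Theorem~\ref{thm:Gmma-limsup} for $K$ replaced by any finite bound $\ge \norm{u}_{L^\infty(\mathcal{F}(u))}$. Moreover, by Remark~\ref{rmk:SBV-relax-bv} the corresponding recovery sequence can be chosen to be constant (equal to the inner traces $u(x_{i-1}+)$, $u(x_i-)$) near the endpoints $x_{i-1}, x_i$, with $\gamma_\varepsilon$ vanishing there. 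Next I would treat an infinite part: on an interval $(x_{i-1}, x_i)$ with $\alpha_i = \pm\infty$, I set $u_\varepsilon \equiv \pm 1/\varepsilon$ (some fixed diverging value independent of the rest) and $\gamma_\varepsilon \mrs (x_{i-1}, x_i) = 0$, which costs no energy on that interval in $E_\varepsilon$ since $u_\varepsilon' - g_\varepsilon = 0$ and $g_\varepsilon = 0$ there; clearly $u_\varepsilon \to \pm\infty$ a.e.\ on $(x_{i-1},x_i)$ as required by the notion of convergence in $L^0((a,b),\overline{\R}) \times \M$.

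Finally I would glue these pieces together across the partition points $x_1, \ldots, x_{m-1}$. On a small interval $(x_i - \varepsilon^2 - 2\varepsilon, x_i + \varepsilon^2 + 2\varepsilon)$ around each $x_i$, I interpolate $u_\varepsilon$ linearly between the two adjacent constant values (which are the inner traces if the neighbour is finite, or $\pm 1/\varepsilon$ if it is infinite), exactly as in the proof of Proposition~\ref{SBVupperbpundestimate}, and put $g_\varepsilon = u_\varepsilon'$ on the central interval $(x_i - \varepsilon^2, x_i + \varepsilon^2)$ and $g_\varepsilon = 0$ on the rest of the transition layer. The elastic contribution vanishes there; the non-local term contributes, in the limit, $2 f(\tfrac12 |[u](x_i)|)$ for each $i$, reading $f(\infty) = c_0$ whenever a jump to $\pm\infty$ occurs — which matches the convention in the definition of $E$ for $K = \infty$. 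The flat-norm convergence $\gamma_\varepsilon \mrs A \to \gamma \mrs A$ holds on every $A \Subset \mathcal{F}(u) \setminus \{x_1, \ldots, x_{m-1}\}$ because there it coincides, for small $\varepsilon$, with the recovery sequence from Theorem~\ref{thm:Gmma-limsup} restricted to $A$. Summing the energy contributions gives
\[ \limsup_{\varepsilon \to 0} E_\varepsilon(u_\varepsilon, \gamma_\varepsilon)
   \le \int_{\mathcal{F}(u)} |u' - g|^2 \dd x + c_0 \int_{\mathcal{F}(u)} |g| \dd x + c_0 |D^c u|(\mathcal{F}(u)) + 2 \sum_{x \in J_u} f\big( \tfrac12 |[u](x)| \big) = E(u,\gamma). \]

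The main obstacle I anticipate is bookkeeping rather than conceptual: one must verify carefully that the pieces can be assembled into a single globally admissible pair (in particular that $u_\varepsilon \in W^{1,1}(a,b)$ and $g_\varepsilon \in L^1(a,b)$ globally, and that no spurious energy is created in the transition layers where an infinite value $\pm 1/\varepsilon$ is joined to a finite trace), and that the convention $f(\infty) = c_0$ is produced by the limiting non-local term exactly as in the $SBV^2$ computation. Since each of these points is a direct adaptation of arguments already carried out for $K < \infty$, no genuinely new difficulty arises.
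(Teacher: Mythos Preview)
Your proposal is correct and follows essentially the same strategy as the paper's proof: build recovery sequences on each interval of the partition via the $K<\infty$ result (Theorem~\ref{thm:Gmma-limsup}), set $u_\varepsilon \equiv \pm 1/\varepsilon$ on the infinite pieces, and glue across the partition points by the linear-interpolation construction of Proposition~\ref{SBVupperbpundestimate}, picking up $2f\big(\tfrac12|[u](x_i)|\big)$ (with $f(\infty)=c_0$) at each transition. The only differences are cosmetic: the paper takes the \emph{minimal} partition consisting solely of the infinite-jump points (so every finite jump is absorbed into some $K<\infty$ recovery sequence rather than handled by an extra gluing layer), and it does not invoke Remark~\ref{rmk:SBV-relax-bv} but simply overwrites $\tilde u_\varepsilon$ near the partition points by the Proposition~\ref{SBVupperbpundestimate} construction --- your use of Remark~\ref{rmk:SBV-relax-bv} to force constant boundary values makes the gluing a bit cleaner but is not needed.
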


\begin{proof}
Without loss of generality we assume $u \in BV_{\infty,\mathcal{P}}(a,b)$. Let $a = x_0 < \ldots < x_m = b$ such that $\{x_1, \ldots, x_m\} = \{x \in (a, b) : |[u](x)| = \infty\}$. Note $\| u \|_{L^\infty(\mathcal{F}(u))} < \infty$. With the help of Theorem~\ref{thm:Gmma-limsup} we choose $\tilde{u}_\varepsilon, \tilde{g}_\varepsilon : (a,b) \to \R$ such that $\big( \tilde{u}_\varepsilon|_{(x_{i-1}, x_i)}, \tilde{g}_\varepsilon \mathcal{L}^1 \mrs (x_{i-1}, x_i) \big)$ is a recovery sequence for $\big( u|_{(x_{i-1}, x_i)}, \gamma \mrs (x_{i-1}, x_i) \big)$ whenever $(x_{i-1}, x_i) \subset \mathcal{F}(u)$ and $\tilde{u}_\varepsilon \equiv \pm \varepsilon^{-1}$, $\tilde{g}_\varepsilon \equiv 0$ on $(x_{i-1}, x_i)$ in case $u \equiv \pm \infty$ on $(x_{i-1}, x_i)$. 

As in the proof of Proposition~\ref{SBVupperbpundestimate} now define $u_{\varepsilon} \in W^{1,1}(a,b)$ by linear interpolation near the $x_i$ as
\[
\begin{array}{lll}
& u_{\varepsilon}  \coloneqq \tilde{u}_{\varepsilon} & \text{on } (a,b) \setminus \bigcup_{i=1}^{m-1} (x_i - \varepsilon^2 - 2 \varepsilon, x_i +\varepsilon^2 + 2 \varepsilon),\\
& u^{\prime}_{\varepsilon}  \coloneqq  \frac{\tilde{u}_{\varepsilon}(x_i + \varepsilon^2 + 2\varepsilon)-\tilde{u}_{\varepsilon}(x_i- \varepsilon^2 -2 \varepsilon)}{2\varepsilon^2} &\text{on } (x_i -\varepsilon^2 , x_i +\varepsilon^2) \text{ for } i \in \{1,\ldots,m-1\},\\[0.1cm]
& u^{\prime}_{\varepsilon}  \coloneqq  0 & \text{otherwise}.
\end{array}
\]
Clearly, $u_{\varepsilon} \to u$ a.e.~on $(a,b)$. Accordingly we define $\gamma_{\varepsilon} \in \mathcal{M}(a,b)$ as $\gae = g_{\varepsilon} \mathcal{L}^1$ with 
\[g_{\varepsilon}: =\begin{cases} \tilde{g}_{\varepsilon} & \text{on } (a,b) \setminus \bigcup_{i=1}^{m-1} (x_i - \varepsilon^2 - 2\varepsilon, x_i +\varepsilon^2 + 2\varepsilon),\\ 
\ue^{\prime}& \text{on } (x_i -\varepsilon^2 , x_i +\varepsilon^2) \text{ for } i \in \{1, \ldots,m-1\},
 \\0& \text{otherwise}.
\end{cases}\]
So still $\gae \to \gamma$ locally in the flat norm on each $(x_{i-1}, x_i) \subset \mathcal{F}(u)$. Estimating the energy as 
\begin{align*}
 E_{\varepsilon}(u_{\varepsilon}, \gamma_{\varepsilon})
 & \leq \int_{\mathcal{F}(u)} |\tilde{u}_\varepsilon' - \tilde{g}_\varepsilon|^2 \dd x + \frac{1}{\varepsilon} \int_{\mathcal{F}(u)} f \bigg(  \varepsilon \dashint_{I_{\varepsilon}(x)\cap (a,b)}{ |\tilde{g}_{\varepsilon}| \dd t }\bigg) \dd x \\ 
 & \quad + \frac{1}{\varepsilon} \int_{\bigcup_{i=1}^{m-1} (x_i - \varepsilon^2 -  \varepsilon, x_i +\varepsilon^2 + \varepsilon)} f\bigg(  \frac{1}{2} \int_{(x_i-\varepsilon^2,x_i+\varepsilon^2)} |u_{\varepsilon}^{\prime} | \dd t \bigg) \dd x, 
\end{align*}
where the last term on the right-hand side is bounded by 
$2(1+\varepsilon) (m-1) c_0 = 2(1+\varepsilon) c_0 \# J_u \setminus \mathcal{F}(u)$, we find that indeed 
\[ \limsup_{\varepsilon\to\infty} E_{\varepsilon}(u_{\varepsilon}, \gamma_{\varepsilon})
 \leq E(u,\gamma). \qedhere \] 
\end{proof}


\section[{\texorpdfstring{$\Gamma$}{Gamma}-convergence for the minimal energies with respect to \texorpdfstring{$\gamma$}{gamma}}]{\boldmath \texorpdfstring{$\Gamma$}{Gamma}-convergence for the minimal energies with respect to \boldmath \texorpdfstring{$\gamma$}{gamma}}
\label{sec: minimal energies 1d}

We finally prove the $\Gamma$-convergence result in Corollary~\ref{mainresult-K-infty} for the minimal energies with respect to the second variable $\gamma$, i.e., we consider the energies $\tilde{E}_{\varepsilon}$ and $\tilde{E}$ from~\eqref{def_minimal_energy_eps} and~\eqref{def_minimal_energy_limit}, respectively. We only treat the case $K < \infty$, the necessary modifications for $K = \infty$ are straightforward. Notice that, as a direct consequence of the fact that the function~$g^\ast$ from~\eqref{gammaaopt} solves the optimization problem in~\eqref{optimization_problem}, for every $u \in BV(a,b)$ there holds
\begin{equation}\label{optgamma}
\tilde{E}(u)=E(u, D^su+ g^\ast \mathcal{L}^1) = E(u,\go).
\end{equation}
For completeness we state also the corresponding compactness result.

\begin{corollary}[Compactness of the minimal energies with respect to $\gamma$]\label{comME}
Let $\{u_{\varepsilon}\}_{\varepsilon}$ be a sequence in $\LE$ with
\begin{equation*}
\tilde{E}_{\varepsilon}(\ue)\leq C_0 
\end{equation*}
for a positive constant $C_0$ and all $\varepsilon>0$. There exists a function $u \in BV(a,b)$ with $\norm{u}_{\LU} \leq K$ such that, up to a subsequence, $\{u_{\varepsilon}\}_{\varepsilon}$ converges to~$u$ in $\LE$.
\end{corollary}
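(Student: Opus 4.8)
The plan is to reduce the statement directly to the compactness result of Theorem~\ref{com} by choosing, for each $\varepsilon$, a near-optimal eigendeformation field. Concretely, by the definition of $\tilde{E}_\varepsilon$ in~\eqref{def_minimal_energy_eps}, for every $\varepsilon > 0$ there exists $g_\varepsilon \in L^1(a,b)$ such that, upon setting $\gamma_\varepsilon \coloneqq g_\varepsilon \mathcal{L}^1$, we have
\[
  E_\varepsilon(u_\varepsilon, \gamma_\varepsilon) \le \tilde{E}_\varepsilon(u_\varepsilon) + \varepsilon \le C_0 + 1
\]
for all $\varepsilon \le 1$. In particular each pair $(u_\varepsilon, \gamma_\varepsilon)$ lies in $\W \times \M$ with $\norm{u_\varepsilon}_{L^\infty(a,b)} \le K$ and $u_\varepsilon' - g_\varepsilon \in L^2(a,b)$, and the sequence of energies $\{E_\varepsilon(u_\varepsilon, \gamma_\varepsilon)\}_\varepsilon$ is uniformly bounded.

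I would then apply Theorem~\ref{com} to this sequence $\{(u_\varepsilon, \gamma_\varepsilon)\}_\varepsilon$. It provides a function $u \in BV(a,b)$ with $\norm{u}_{\LU} \le K$ (and, although not needed here, a limiting measure $\gamma = D^s u + g \mathcal{L}^1$ with $u' - g \in L^2(a,b)$) such that, along a subsequence, $u_\varepsilon \to u$ in $L^1(a,b)$. This is exactly the assertion of the corollary.

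There is no genuine obstacle here: the only point requiring a moment's care is that the infimum defining $\tilde{E}_\varepsilon$ need not be attained, which is why one works with an $\varepsilon$-minimizer rather than a minimizer; since the additive error $\varepsilon$ is infinitesimal, the uniform energy bound is preserved and Theorem~\ref{com} applies verbatim. For the case $K = \infty$ the same argument goes through with Theorem~\ref{com-K-infty} replacing Theorem~\ref{com}, as already indicated in the text.
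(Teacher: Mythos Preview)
Your proof is correct and essentially identical to the paper's own argument: the paper also selects a low-energy sequence $\{\gamma_\varepsilon\}_\varepsilon$ with $E_\varepsilon(u_\varepsilon,\gamma_\varepsilon)\le \tilde{E}_\varepsilon(u_\varepsilon)+1$ and then invokes Theorem~\ref{com}. The only cosmetic difference is that the paper uses the slack $+1$ for all $\varepsilon$ rather than $+\varepsilon$, which is immaterial.
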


\begin{proof}
We choose a low energy sequence $\{\gae\}_{\varepsilon}$ in $\M$ with $E_{\varepsilon}(\ue,\gae) \leq \tilde{E}_{\varepsilon}(\ue) + 1$ for all~$\varepsilon$. Since there holds $E_{\varepsilon}(\ue,\gae) \leq C_0 +1$ for all $\varepsilon$, according to Theorem~\ref{com} there exists a function $u \in BV(a,b)$ with $\norm{u}_{\LU} \leq K$ such that $\ue \to u$ in $L^1(a,b)$.
\end{proof}

\begin{proof}[Proof of Corollary~\ref{mainresult-K-infty}]
It is again sufficient to establish the $\Gamma$-$\liminf$ inequality and the ${\Gamma\text{-}\limsup}$-inequality only for $u \in BV(a,b)$ with $\norm{u}_{L^{\infty}(a,b)} \leq K$ since the estimates are trivial otherwise.

We first show the $\Gamma$-$\liminf$ inequality. We consider an arbitrary sequence $\{\ue\}_{\varepsilon}$ in $L^1(a,b)$ with $\ue \to u$ in $L^1(a,b)$, for which we may assume $\tilde{E}_{\varepsilon}(\ue)\leq C_0$ for some positive constant~$C_0$  and all~$\varepsilon$. We then select a low energy sequence $\{\gae\}_{\varepsilon}$ in $\M$ with
\begin{equation*}
E_{\varepsilon}(\ue,\gae) \leq \tilde{E}_{\varepsilon}(\ue) + \varepsilon \quad \text{ for every } \varepsilon>0.
\end{equation*}
By passing to a subsequence if necessary, we may assume that
\begin{equation*}
\liminf_{\varepsilon \to 0} E_{\varepsilon}(\ue,\gae) = \lim_{\varepsilon \to 0} E_{\varepsilon}(\ue,\gae).
\end{equation*}
At this stage we employ the compactness result of Theorem~\ref{com}: since $\ue \to u$ in $\LE$, there exists a function $g \in L^1(a,b)$ with $u^{\prime} - g \in L^2(a,b)$ such that, up to a subsequence, $\gae \to D^su + g \mathcal{L}^1$ in the flat norm. Since by Theorem~\ref{mainresult} we have $\Gamma$-convergence of $\{E_{\varepsilon}\}_{\varepsilon>0}$ to~$E$ in $L^1(a,b) \times \mathcal{M}(a,b)$ also for every subsequence, we obtain
\begin{align*}
\lim_{\varepsilon \to 0} E_{\varepsilon}(\ue,\gae) = \liminf_{\varepsilon \to 0} E_{\varepsilon}(\ue,\gae) \geq E(u, D^su + g \mathcal{L}^1),
\end{align*} 
which, by the choice of the sequence $\{\gae\}_{\varepsilon}$, shows  
\begin{equation*}
\liminf_{\varepsilon \to 0}\tilde{E}_{\varepsilon}(u_\varepsilon)\geq \tilde{E}(u).
\end{equation*}

We next show the $\Gamma$-$\limsup$ inequality. Via Theorem~\ref{mainresult}~(ii) we find a recovery sequence $\{(\ue,\gae)\}_{\varepsilon}$ of $(u,\go)$ in $L^{1}(a,b) \times \M$. In combination with~\eqref{optgamma} this yields the claim
\begin{equation*}
\limsup_{\varepsilon \to 0} \tilde{E}_{\varepsilon}(\ue)\leq \limsup_{\varepsilon \to 0} E_{\varepsilon}(\ue,\gae) \leq E(u,\go) = \tilde{E}(u). \qedhere
\end{equation*}
\end{proof}


\section*{Acknowledgments} 
V.~A.-V.\ gratefully acknowledges financial support of the Konrad-Adenauer-Stiftung. All authors are grateful to the referee for valuable suggestions that have led to substantial improvements of the results.



\end{document}